\documentclass[letterpaper,11pt]{amsart}
\usepackage[margin=2cm]{geometry}
\usepackage{amsmath}
\usepackage{calc}
\usepackage[dvipsnames]{xcolor}
\usepackage{accents}
\usepackage[all,cmtip]{xy}                      %

\usepackage{tikz}

\usepackage{soul}

\CompileMatrices                            

\UseTips                                    

\input xypic
\usepackage[bookmarks=true]{hyperref}       

\usepackage{amssymb,latexsym,amsmath,amscd}
\usepackage{xspace}
\usepackage{color}
\usepackage{graphicx}
\usepackage{dsfont}

\usepackage{enumitem}
\usepackage{hyperref}
\hypersetup{
	colorlinks,
	linkcolor={MidnightBlue},
	citecolor={Green}
}
\usepackage[nameinlink,noabbrev]{cleveref}
\usepackage[normalem]{ulem}
\usepackage{centernot} 

\reversemarginpar

\DeclareMathOperator{\Res}{Res}

\theoremstyle{plain}
\newtheorem{theorem}{Theorem}[section]
\newtheorem*{theorem*}{Theorem}
\newtheorem{proposition}[theorem]{Proposition}
\newtheorem{corollary}[theorem]{Corollary}
\newtheorem{lemma}[theorem]{Lemma}

\theoremstyle{definition}
\newtheorem{definition}[theorem]{Definition}

\newtheorem{notation}[theorem]{Notation}

\newtheorem{remark}[theorem]{Remark}
\newtheorem{example}[theorem]{Example}

\newcommand{\enm}[1]{\ensuremath{#1}}          %
\newcommand{\op}[1]{\operatorname{#1}}
\newcommand{\cal}[1]{\mathcal{#1}}

\newcommand{\CC}{\enm{\mathbb{C}}}

\newcommand{\NN}{\enm{\mathbb{N}}}

\newcommand{\PP}{\enm{\mathbb{P}}}

\newcommand{\TT}{\enm{\mathbb{T}}}

\newcommand{\Bb}{\enm{\cal{B}}}
\newcommand{\Cc}{\enm{\cal{C}}}

\newcommand{\Ee}{\enm{\cal{E}}}
\newcommand{\Ff}{\enm{\cal{F}}}

\newcommand{\Ii}{\enm{\cal{I}}}

\newcommand{\Ll}{\enm{\cal{L}}}

\newcommand{\Oo}{\enm{\cal{O}}}

\newcommand{\Ss}{\enm{\cal{S}}}

\renewcommand{\phi}{\varphi}
\renewcommand{\theta}{\vartheta}
\renewcommand{\epsilon}{\varepsilon}

\newcommand{\sing}{\op{Sing}}

\DeclareMathOperator{\red}{red}
\DeclareMathOperator{\reg}{sm}

\renewcommand{\to}[1][]{\xrightarrow{\ #1\ }}

\newcommand{\old}[1]{}

\date{}
\title{
On the strong base locus of a projective variety}

\author{Edoardo Ballico \and Maria Chiara Brambilla \and Pierpaola Santarsiero}

\newcommand{\Addresses}{{
  \bigskip
  \footnotesize

  \textsc{Edoardo Ballico, Universit\`a di Trento, Dipartimento di Matematica, Via Sommarive 14, 38123 Povo, Trento, Italy }\par\nopagebreak
  \textit{E-mail address}: \email{edoardo.ballico@unitn.it}

  \medskip

\textsc{Maria Chiara Brambilla,Dipartimento di Ingegneria Industriale e Scienze Matematiche, Universit\`a Politecnica delle Marche, Via Brecce Bianche
I-60131 Ancona, Italy}\par\nopagebreak
  \textit{E-mail address}: \email{m.c.brambilla@staff.univpm.it}

  \medskip
  \textsc{Pierpaola Santarsiero, Dipartimento di Ingegneria Industriale e Scienze Matematiche, Universit\`a Politecnica delle Marche, Via Brecce Bianche
I-60131 Ancona, Italy}\par\nopagebreak
  \textit{E-mail address}: \email{p.santarsiero@staff.univpm.it}
}}

\keywords{Secant varieties, Veronese varieties, Segre-Veronese varieties, Terracini sets}

\subjclass[2020]{Primary: 14N07. Secondary: 14N05, 14M99}

\begin{document}
\begin{abstract}
 We introduce and study the base locus and the strong base locus of a projective variety $X$. The base locus of $X$ parametrizes configurations of smooth points of $X$ where the span of the tangent spaces of $X$ at these points intersects $X$ at some additional smooth point. The strong base locus parametrizes configurations of smooth points of $X$ for which the span of the tangent spaces of $X$ at the given configuration contains the entire tangent space at an additional point.
 
These notions  originate from the study of base loci of tangential projections, are strictly related to interpolation problems with double points in special position, and
provide a natural framework to study tangential contact for nongeneral points.
We give first properties and explore connections with Terracini loci and with the concept of identifiability. We focus on tensor-related varieties and characterize the nonemptiness of base loci and strong base loci for Veronese and Segre-Veronese varieties.
\end{abstract}
\maketitle

\section{Introduction}

Let $X \subset \PP^N$ be an integral and nondegenerate variety, meaning that $X$ is irreducible, reduced, and not contained in any proper linear subspace of $\PP^N$. 
For a finite set $A\subset X_{\reg}$ of $r$ smooth points, denote by $\Ii_{A}$ its ideal sheaf and let $2A$ be the zero-dimensional scheme supported at $A$ whose ideal sheaf is $\Ii_{2A}=(\Ii_{A,X})^2$.

If $X$ is embedded by a (possibly incomplete) linear system $\Ll\subseteq |\Oo_X(1)|$, we denote
by $\Ll(-2A)$ the linear system of all divisors in $\Ll$ containing $2A$. Set $\Ll=\PP(V)$ and $\Ll(-2A)=\PP(W)$.
If we denote by $\langle 2A\rangle$ the span of the union of the tangent spaces of $X$ at the points of $A$, the dual projection gives a rational map $$\pi_{\langle 2A\rangle}:\PP^N=\PP(V^\vee)\dashrightarrow  \PP(W^\vee)$$ which is the linear projection of $X$ from the span $\langle 2A\rangle$.
The rational map defined from $\Ll(-2A)$ factors through $\pi_{\langle 2A\rangle}$ and the two maps are identified with a slight abuse of notation. The projection $\pi_{\langle 2A \rangle}$ is usually called the \emph{tangential projection} of $X$ from $A$, see for instance \cite[Section 3]{CC02}.

Generally speaking, one expects the base locus of the subsystem $\Ll(-2A)$, 
i.e. the indeterminacy locus of the tangential projection $\pi_{\langle2A \rangle}$, to be given only by the points of $A$. However, this is not the case in many instances, especially if the points of $A $ are not general. An easy example is given by $\lceil (d+1)/2\rceil$ points lying on a rational normal curve in a Veronese variety $\nu_d(\PP^n)\subset \PP^{\binom{n+d}{d}-1}$, which correspond to collinear points in $\PP^n$ embedded via $\Oo_{\PP^n}(d)$ for some $d\geq 2$.

Hence, one may wonder for which configurations of $r$ smooth points of $X$ 
the base locus of the corresponding subsystem is bigger than the expected one. Equivalently, with a simpler language one may ask when $\langle 2A\rangle$ contains another point $q\in X_{\reg}\setminus A$. This motivated us to introduce the following geometric object. 

Denote by $\Ss_r(X)$ the variety parametrizing unordered sets of $r$ smooth points. 
\begin{definition}\label{def: Bb}
Let $X\subset \PP^N$ be an integral and nondegenerate variety. Define the 
$r$-th \emph{base locus of} $X$ as 
$$
\Bb_r(X):=\{A\in \Ss_r(X) \, \vert \, \langle 2A\rangle \ne \PP^N \hbox{ and }\langle 2A\rangle \cap (X_{\reg}\setminus A)\ne \emptyset  \}.
$$
For a fixed $A\in \Bb_r(X)$, we call $B(A)=(\langle 2A\rangle \cap (X_{\reg}\setminus A))_{\red}$, that is the set theoretic intersection of  $\langle 2A \rangle$ and $X_{\reg}\setminus A$.
\end{definition}

Notice that to every finite set $A$ in $\Bb_r(X)$ it corresponds a set of points $B(A)$ and we remark that $B(A)\cup A$ is the support of the base locus of the system $\Ll(-2A)$, or of the rational projection $\pi_{\langle 2A\rangle}$.

One can go further and wonder for which configurations $A$ of points in $X_{\reg}$ the base locus of the corresponding subsystem $\Ll(-2A)$ actually contains another double point. Geometrically, this corresponds to understanding when $\langle 2A\rangle$ contains the tangent space $T_q X=\langle 2q\rangle$ at some point $q\in X_{\reg}$ distinct from the points of $A$. 
\begin{definition}\label{def: Ee}
Let $X\subset \PP^N$ be an integral and nondegenerate variety. Define the $r$-th \emph{strong base locus of} $X$ as 
$$
\Ee_r(X):=\{ A\in \Ss_r(X) \, \vert \, \langle 2A\rangle\ne \PP^N \hbox{ and }\langle 2(A \cup  \{q\})\rangle =\langle 2A\rangle, \hbox{ for some }q\in X_{\reg}\setminus A\}. 
$$
For a given $A\in \Ss_r(X)$, the \emph{tangential contact locus} $C(A)$ of $A$ is $C(A)=\mathrm{Sing} (\langle 2A \rangle \cap X_{\reg}) $. We have that $A\in \Ee_r(X)$ if and only if $C(A)\setminus A\neq \emptyset$.
We set $E(A)=C(A)\setminus A$, which is Zariski closed in $X_{\reg}\setminus A$. We denote by $\tilde{\Ee}_{r}(X)$ the set of all $A\in \Ee_r(X)$ such that $C(A)$ is infinite, i.e., it is positive dimensional:
$$
\tilde{\Ee}_{r}(X):=\{ A\in \Ss_r(X)\, \vert \, \langle 2A\rangle\ne \PP^N \hbox{ and } \dim (C(A))>0 \}.
$$
\end{definition}
One can also consider slight variations of the loci $\Bb_r(X)$ and $\Ee_r(X)$. For instance, for $\Bb_r(X)$ one can relax the smooth assumption on the extra point of $X$ that lies in $\langle 2A\rangle$. Similarly for $\Ee_r(X)$. However in our manuscript we will always restrict to working on $X_{\reg}$. 
Notice that $\Bb_r(X)$ can also be seen as all the $A\in \Ss_r(X)$ for which $\langle 2A\rangle \ne \PP^N $ and $\langle 2A\rangle =\langle \langle 2A\rangle\cup q\rangle$ for some $q\in X_{\reg}$. 
Hence it is
 immediate to derive the following chain of inclusions
$$
\tilde{\Ee}_r(X)\subseteq \Ee_r(X)	\subseteq \Bb_r(X).
$$
From a computational point of view, in many of the most interesting cases for applications, namely those related to tensor decomposition \cite{landsberg2011tensors, bernardi2018hitchhiker}, 
one knows the equations defining $X$ inside $\PP^N$. This means that for any known $A\in \Ss_r(X)$ the linear spaces $\langle A\rangle$
and $\langle 2A\rangle$ are easy to describe. Hence knowing these equations we get the scheme-theoretic intersection of $X$ and $\langle 2A\rangle$. As a result, it should not be complicated to compute $B(A)$ and $E(A)$, and also to understand if a given $A\in \Ss_r(X)$ is an element of either $\Bb_r(X)$ or $\Ee_r(X)$.

We believe that both the base locus and the strong base locus of a variety are interesting geometric objects. Moreover the strong base locus of a projective variety $X$ has a natural interpretation in the setting of the geometry of higher secant varieties to $X$. 
\subsection*{Geometric interpretation}
In the generic scenario, that is when the points of $A$ are general, the main geometric question motivating the study of $\Ee_r(X)$ is closely related to the concept of tangential weak defectivity first developed in \cite{COtwd}. A variety $X$ is called \emph{tangentially weakly defective} if, for a general set of points, the span of their tangent spaces contains the tangent space at some other point of $X$. 

The notion of tangential weak defectivity is based on the two celebrated Terracini Lemmas. These lemmas are fundamental in the study of secant varieties, a classical topic that remains as relevant and interesting today as it ever was, especially in the context of tensor decomposition \cite{landsberg2011tensors, bernardi2018hitchhiker}. The first Terracini Lemma \cite{terracini1911, aadlandsvik1987joins} says that the dimension of the $r$-secant variety $\sigma_r(X)$ of $X$ equals the dimension of the span $\langle T_{p_1}X, \dots, T_{p_r}X \rangle$ of the $r$ tangent spaces at general points $p_i\in X$. A variety $X$ is \emph{defective} if the dimension of $\sigma_r(X)$ is smaller than $\min\{ r(\dim X + 1)-1, N \}$. 

The second Terracini Lemma \cite{terracini1911} states that when $X$ is defective, a general hyperplane containing the span of the tangent spaces at a general set of $r$ points of $X$ is actually tangent along a positive-dimensional subvariety (see \cite{cilhir,CC02} for modern references). This lemma led to the notion of \emph{weak defectivity}, first introduced in \cite{CC02} as a tool to better understand defectivity, bringing renewed attention to the geometry of contact loci \cite{chiantini2006concept, chiantini2010dimension, ballico2018dimension}. In particular (see for instance also \cite[Lemma 2.3]{BO08}), the second Terracini Lemma says that if $X$ is defective, then there exists a positive-dimensional subvariety $\mathcal{C} \subseteq X$
through $r$ general points $p_1, \dots, p_r\in X$, where the tangent spaces at points of $\mathcal{C}$ lie in the span of the tangent spaces at $p_1, \dots, p_r$. In other words, if $X$ is defective then it is tangentially weakly defective. Note that the reverse implication does not hold (see for instance \cite[Example 2.5]{COtwd}). 
If $X$ is $r$-tangentially weakly defective, then already for a general $A\in \Ss_r(X)$ we have $A\in \tilde{\Ee}_r(X)$. 
While if $X$ is not $r$-weakly defective then for a general $A\in \Ss_r(X)$ we have $A\notin \Ee_r(X)$ \cite[Theorem 1.4]{CC02}. These immediate consequences make clear the connections of these notions with the strong base loci we just introduced and aim to investigate. We might interpret the understanding of $\Ee_r(X)$ as the study of nongeneric tuples that have a tangential contact locus.

 We point out that in the literature related to contact loci (see for instance \cite{CC02, chiantini2006concept, chiantini2010dimension, CasarottiMella2022, LafaceMassarentiRischter2022, FreireCasarottiMassarenti2021}), only a general
$S\in \Ss_r(X)$ is considered, whereas in our setting $S$ is arbitrary. The corresponding considered contact locus $\Cc_S$ of the general element in $\Ss_r(X)$ is defined as the union of the closure in $X$ of the irreducible components of $\mathrm{Sing}(\langle 2S\rangle \cap X_{\reg})$ containing at least one point of $S$. Hence the contact locus $C(S)$ of \Cref{def: Ee} coincides with $S$ under that definition if and only if each point of $S$ is an isolated point of $\mathrm{Sing}(\langle 2S\rangle \cap X)$. In particular, since only general $S$ are considered there, if at least one point of $S$ is isolated, then all of them are isolated.

In this manuscript we begin our investigation by focusing on Veronese and Segre–Veronese varieties because of  their connection to tensor decomposition and tensor identifiability which gives further evidence of the interest in studying the introduced loci. Indeed, motivated by tensor identifiability, the relevance of determining whether a tuple of points lies in $\Ee_r(X)$ already appears in \cite{COV17} with a different language, we refer to \Cref{section: identifiability} for further discussion.

Still, the behavior of both the base locus and the strong base locus of a projective variety in greater generality remains largely unexplored and naturally call for future exploration.

\subsection*{Detailed outline and main results}
In \Cref{section: prel} we give the needed preliminaries as well as explore first properties of $\Bb_r(X)$ and $\Ee_r(X)$. In particular, we reformulate both definitions in terms of interpolation theory (\Cref{remark: traduzione Bb ed Ee cohomologica}). In the same section we introduce the \emph{critical scheme for a pair} (\Cref{def: critical scheme for a pair}), which is our main technical tool used to study these loci. We conclude with \Cref{connection terracini}, where we compare our new objects with Terracini loci. \Cref{example: confronto terracini con Bb ed Ee} shows that neither $\Bb_r(X)$ nor $\Ee_r(X) $ or $\tilde{\Ee}_r(X)$ is comparable with respect to inclusion with the $r$-Terracini locus of $X$. However starting from elements of $\Bb_r(X)$ one can construct elements of the $(r+1)$-Terracini locus of $X$, see \Cref{remark: dato A in Ee costruisco sempre una r+1 upla terracini}.\\
\Cref{section: veronese} focuses on $X$ being a Veronese variety. We sum up below our main results.
\begin{enumerate}
    \item Let $X$ be an $n$-dimensional Veronese variety embedded in degree $d$. Then: \\
if $n=1$ we have $\Bb_r(X)=\Ee_r(X)=\emptyset$ (\Cref{example: Bb vuoto crn}); otherwise for $n\geq 2$ we have
\begin{itemize}
    \item  $\Bb_r(X)$ is nonempty if and only if $2r\ge d+1$ (\Cref{theorem: Bb per veronesi});
    \item $\Ee_r(X)$ and $\tilde{\Ee}_r(X)$ are nonempty if and only if $r\geq d $ (\Cref{theorem: Ee e tilde Ee per veronesi});
    \item for $r=d$ we have $\Ee_d(X)=\tilde{\Ee}_d(X)=\{ A\, |\, A \text{ is contained in a degree-$d$ rational normal curve}\}$ (\Cref{theorem: r=d veronesi Ee}).
\end{itemize} 
\end{enumerate}
The focus of \Cref{section: segre-veronese} is on Segre-Veronese varieties. Our main results are the following.
\begin{enumerate} \setcounter{enumi}{1}
    \item
Let $X$ be a $k$-factors Segre-Veronese variety embedded in degree $d_1,\dots,d_k\geq 1$, for some $k\geq 2$ and let $t:= \min\{d_1,\dots,d_k\}$. 
Then 
\begin{itemize}
    \item $\Bb_r(X)$ is nonempty if and only if $2r\ge t+1$ (\Cref{theorem: Bb per SV});
    \item $\Ee_r(X)$ and $\tilde{\Ee}_r(X)$ are nonempty if and only if $r\geq t+1 $ (\Cref{theorem: Ee e tilde Ee per segre-veronesi}).
\end{itemize} 
\end{enumerate}

Moreover, both for Veronese varieties and Segre-Veronese varieties we make a detailed study of the first cases in which $\Bb_r(X)$ is not empty classifying the base locus $B(A)$ of any given $A\in \Bb_r(X)$.

A useful tool to prove our results is \Cref{lemma: famoso per SV}, which may be of independent interest since it can be thought as a direct generalization in the Segre-Veronese setting of the well-known \cite[Lemma 34]{BGI11}.
Lastly, \Cref{section: identifiability} explores the connection of $\Ee_r(X)$ with the concept of tensor identifiability.

\subsection*{Acknowledgements}
EB and MCB are members of INdAM GNSAGA. MCB and PS acknowledge support from the European Union under NextGenerationEU. PRIN 2022, Prot. 2022E2Z4AK and PRIN 2022 SC-CUP: I53C24002240006.

\section{Preliminaries and first properties of $\Bb_r(X)$ and $\Ee_{r}(X)$}\label{section: prel}
In the following, we work over $\CC$. We recall here the notation we will use for zero-dimensional schemes and in particular for two-fat points.
\begin{notation}
For each closed and reduced algebraic subset $S\subset X$ let $2S$ denote the closed subscheme of $X$ with $(\Ii_{S,X})^2$ as its ideal sheaf. Note that $2S$ contains $2q$ for all $q\in S$ and that $2S$ is the minimal closed subscheme of $S$ with these properties.

Given a zero-dimensional scheme $Z\subset X$, in the following we will sometimes use the notation $Z_{\red}$ to denote the support of $Z$. For a simple point $p\in Z_{\red}$ we will use the notation $p\subset Z$ to consider $\{ p\}$ as a connected component of $Z$ and also $p\cup Z$ to consider the schematic union $\{ p\} \cup Z$.

For any scheme $Z\subset \PP^N$ let $\langle Z \rangle \subset\PP^N$ denote the minimal linear subspace of $\PP^N$
containing $Z$.
\end{notation} 
Let us pass now to recall some general standard facts on the cohomology of zero-dimensional schemes. 

\begin{remark}\label{remark: algebra lineare per schemini di grado at most 2}
    Let $Z\subset \PP^N$ be a zero-dimensional scheme such that every connected component of $Z$ has degree at most 2. Then $\dim \langle Z\rangle\leq \deg(Z)-1$. 
    
    Moreover if $\dim \langle Z\rangle<\deg(Z)-1$ then there exists a proper subscheme $W\subsetneq Z$ with $ \deg(W)=\dim \langle Z\rangle+1$ and $\dim \langle W\rangle=\deg(W)-1$. In particular, this gives $\langle W\rangle=\langle Z\rangle$.
\end{remark}

\begin{remark}\label{succ esatta}
Fix an integral projective variety $X$, a zero-dimensional scheme $Z\subset X$ and a point $p\in X_{\reg}$ with $p\not\in Z_{\mathrm{red}}$. The long cohomology exact sequence of the exact sequence
$$0\rightarrow \Ii_{Z\cup p,X}(1)\rightarrow \Ii_{Z,X}(1)\rightarrow \Oo_{p}\rightarrow 0$$
gives
$$H^0(\Ii_{Z\cup p,X}(1))\rightarrow H^0( \Ii_{Z,X}(1))\rightarrow \CC\rightarrow H^1(\Ii_{Z\cup p,X}(1))\rightarrow H^1( \Ii_{Z,X}(1)).$$
More generally, for any zero-dimensional scheme $W\subset Z$ we have
$$0\rightarrow \Ii_{Z,X}(1)\rightarrow \Ii_{W,X}(1)\rightarrow \Oo_{Z\setminus W}\rightarrow 0$$
from which we get
$$h^0(\Ii_{Z,X}(1))\le h^0(\Ii_{W,X}(1)) \le h^0(\Ii_{Z,X}(1))+\deg(Z)-\deg(W)$$ and $$h^1(\Ii_{W,X}(1))\le h^1(\Ii_{Z,X}(1))\le
h^1(\Ii_{W,X}(1))+\deg(Z)-\deg(W).$$ 
\end{remark}

\begin{remark}\label{prel01}
Fix an integral projective variety $X$ 
and a zero-dimensional scheme $Z\subset X$. Let $\Ff$ be a rank $1$ torsion-free coherent sheaf on $X$ which is locally free in a neighborhood of $Z$. Note that $\deg(\Ff_{|Z}) =\deg(Z)$. Since $\dim Z =0$, we have 
$h^i(Z,\Ff_{|Z})=0$ for all $i\ge 1$. The long cohomology exact sequence induced by $$0\to \Ii_Z\otimes \Ff\to \Ff\to \Ff_{|Z}\to 0$$ 
gives $h^i(\Ii_Z\otimes \Ff) =h^i(\Ff)$ for all $i\ge 2$
and $$h^1(\Ii_Z\otimes \Ff) =h^1(\Ff) +\deg(Z) -h^0(\Ii_Z\otimes \Ff) +h^0(\Ff).$$ 
In particular when $\Ff=\Oo_X(1)$, we have 
$h^i(\Ii_{Z,X}(1))=h^i(\Oo_X(1))$ for $i\geq 2$ and
$$h^0(\Oo_X(1))-h^0(\Ii_{Z,X}(1))= \deg(Z)-(h^1(\Ii_{Z,X}(1))-h^1(\Oo_X(1))).$$
Moreover if $X\subset \PP^N$ is embedded by a complete linear system, we have
$$\dim \langle Z\rangle = \deg(Z)-(h^1(\Ii_{Z,X}(1))-h^1(\Oo_X(1)))-1.$$
\end{remark}

There is another interpretation of the loci $\Bb_r(X),\Ee_r(X)$ and $\tilde{\Ee}_r(X)$ in terms of cohomology of zero-dimensional schemes of two-fat points when $X$ is embedded in $\PP^N$ via a complete linear system. 

\begin{remark}\label{remark: traduzione Bb ed Ee cohomologica}
Assume that $X\subset \PP^N$ is embedded via a complete linear system. Let $A\in \Ss_r(X)$ and assume $A\in \Bb_r(X)$. Saying that $\langle 2A \rangle\neq \PP^N$ is equivalent to say that $h^0(\Ii_{2A,X}(1))\neq 0$, while requesting that $ \langle 2A \rangle=\langle 2A\cup p\rangle$ for some $p\in X_{\reg}\setminus A$ is equivalent to ask that $h^0(\Ii_{2A,X}(1))=h^0(\Ii_{2A\cup p,X}(1))$. 
Moreover, we have
\begin{equation}\label{cohom}
\langle 2A\rangle = \langle 2A \cup p\rangle \quad\Leftrightarrow\quad h^0(\Ii_{2A\cup p,X}(1))=h^0(\Ii_{2A,X}(1)) \quad\Leftrightarrow\quad h^1(\Ii_{2A\cup p,X}(1))=h^1(\Ii_{2A,X}(1))+1,
\end{equation}
where the second equivalence follows by applying \Cref{prel01} with $\Ff=\Ii_{2A}(1)$ and $Z=p$. Hence we reformulate the objects of Definitions \ref{def: Bb} and \ref{def: Ee} as:
\begin{align*}
\begin{split}
\Bb_r(X)&=\left\{ A\in \Ss_r(X) \, \vert \, h^0(\Ii_{2A,X}(1))=h^0(\Ii_{2A\cup p,X}(1))\neq 0,  \hbox{ for some } p\in X_{\reg}\setminus A \right\} 
\end{split}\\
&=\big\{ A\in \Ss_r(X) \, \vert \, h^0(\Ii_{2A,X}(1))\neq 0, \text{ and }
h^1(\Ii_{2A\cup p,X}(1))=h^1(\Ii_{2A,X}(1))+1 \hbox{ for some } p\in X_{\reg}\setminus A \big\},\nonumber
\end{align*}
and
\begin{align*}
\begin{split}
\Ee_r(X)&=\{A \in \Ss_r(X) \, \vert \, h^0(\Ii_{2A,X}(1))=h^0(\Ii_{2(A\cup p),X}(1))\neq 0,  \hbox{ for some } p\in X_{\reg}\setminus A \}
\end{split}\\
&=\big\{  A\in \Ss_r(X) \, \vert \,  h^0(\Ii_{2A,X}(1))\neq 0 ,\, h^1(\Ii_{2(A\cup p),X}(1))=h^1(\Ii_{2A,X}(1))+\dim X+1 \hbox{ for some } p\in X_{\reg}\setminus A \big\}. \nonumber
\end{align*}
A similar reformulation clearly holds also for $\tilde{\Ee}_r(X)$. 
\end{remark}

To lighten the notation, when it is clear from the context in the following, we will often omit $X$ when dealing with $\Ii_{2A,X}$ and simply write $\Ii_{2A}$.

 If we are able to identify a portion $T$ of the tangential contact locus for a set of $r$ regular points, then belonging to $\tilde{\Ee}_r(X)$ can be recast as follows.
\begin{remark}\label{remark: reformulation of tilde eps with contact}
Let $A\in \Ss_r(X)$ such that $\langle 2A\rangle \ne \PP^N$ and fix a closed set $T\subset X$ such that each irreducible components are positive-dimensional and intersect $X_{\reg}$. The following conditions are equivalent:
\begin{enumerate}[label=\roman*)]
\item\label{ref tilde eps 1} $A\in \tilde{\Ee}_r(X)$ with contact locus containing $T$;
\item\label{ref tilde eps 2}  for all  finite $E\subset T\cap X_{\reg}$ we have $H^0(\Ii_{2A\cup 2E}(1)) =H^0(\Ii_{2A}(1))$;
\item\label{ref tilde eps 3} for all $q\in T\cap X_{\reg}$ we have $\langle 2q\cup 2A\rangle =\langle 2A\rangle$.
\end{enumerate} 
Indeed, notice that  \ref{ref tilde eps 1} implies \ref{ref tilde eps 2} and \ref{ref tilde eps 2} implies \ref{ref tilde eps 3}, by  the definition of $\tilde{\Ee}_r(X)$.
Moreover,  \ref{ref tilde eps 3} implies \ref{ref tilde eps 1}. 
\end{remark}

We now pass to describe basic properties and first examples of the loci just introduced.
There are examples of nonempty $\Bb_1(X)$ as shown in the following.
\begin{example}\label{example: B is not empty if X is ruled by lines}
    Let $X\subsetneq \PP^N$ be such that there exists a line $L\subset X$ with $L\cap X_{\reg}\neq \emptyset$. Then every point of $ X_{\reg}\cap L $ is a point of $\Bb_1(X)$.  
\end{example}
Notice that since $X$ is integral and nondegenerate, if $\dim X=1$, then $\tilde{\Ee}_r(X)=\emptyset$ for all values of $r$.
On the other hand, $\Bb_1(X)$ and $\Ee_1(X)$ may be not empty. For example, since a plane quartic $X$  admits bitangents, we see that $\Ee_1(X)\neq\emptyset$ (see also the forthcoming \Cref{example: confronto terracini con Bb ed Ee}).

In the following remark we show how to construct a set of points  belonging to $\Bb_{r+s}(X)$ (resp. $\Ee_{r+s}(X)$) starting from
elements of $\Bb_r(X)$ (resp. $\Ee_r(X)$). 
\begin{remark}\label{remark: a partire da punti in Bb ed Ee si costruiscono ancora cose in Bb ad Ee con ip h0 non nullo}
Fix integers $s,r>0$ and let $A\in \Bb_r(X)$. Take a set  $F\in \Ss_{s}(X) $ with $F\cap A=\emptyset$, $F\cap B(A)=\emptyset$ and such that $\langle 2(A\cup F)\rangle\neq \PP^N$. Then $A\cup F \in \Bb_{r+s}(X)$ and obviously $B(A\cup F)\supseteq B(A)$. Notice that the assumption $F\cap B(A)=\emptyset$ guarantees that the points of $F$ contribute new directions within $\langle 2(A\cup F)\rangle$. A similar reasoning applies to $\Ee_r(X)$. More precisely, given $A\in \Ee_r(X)$, then for any $F\in \Ss_s(X)$ with $F\cap A=\emptyset$, $F\cap E(A)=\emptyset$ and such that $\langle 2(A\cup F)\rangle\neq \PP^N$ we have that $A\cup F\in \Ee_{r+s}(X)$. 
\end{remark}

The assumptions of \Cref{remark: a partire da punti in Bb ed Ee si costruiscono ancora cose in Bb ad Ee con ip h0 non nullo} can be weakened for $\tilde{\Ee}_{r}(X)$, as we show in the next remark, where we prove that
if $\tilde{\Ee}_r(X)\neq \emptyset$, then for arbitrary $s\ge 0$ we have $\tilde{\Ee}_{r+s}(X)\neq \emptyset$.

\begin{remark}\label{remark: una volta in Ee se aggiungo punti resto in Ee}
Fix integers $s,r>0$ and let $A\in \tilde{\Ee}_{r}(X)$. If we take any subset $F$ in $E(A)$ of arbitrary cardinality $s$, then $A\cup F\in \tilde{\Ee}_{r+s}(X)$. Moreover, the contact locus $C(A)$ of $A$ is contained in the contact locus $C(A\cup F)$ of $A\cup F$. 
\end{remark}

Before proceeding further, it is important to introduce a technical tool that will be central to our analysis in the rest of the manuscript.  
\begin{definition}\label{def: critical scheme for a pair}
Fix a set $A$ of smooth points of $X$ and a smooth point $q$ of $X$ with $q\not\in  A$. 
A \emph{critical scheme for the pair} $(A,q)$ is a zero-dimensional scheme $Z\subset X$ such that
\begin{enumerate}[label=\roman*)]
    \item $q\in Z_{\red}\subseteq 2A\cup \{ q\}$, 
\item for any connected component $v \subset Z$, $\deg(v)\leq 2$,
\item\label{def critical span Z} $q \in \langle Z\setminus q\rangle$, 
\item\label{def critical minimality} $q\not\in\langle W\rangle$ for any $W\subsetneq Z\setminus q$ (\emph{minimality condition}).
\end{enumerate}
In other words, a critical scheme for the pair \((A, q)\) is a zero-dimensional subscheme \(Z \subset X\), supported in \(A \cup \{q\}\), that contains \(q\) as a simple point, such that each component of \(Z\) has degree at most 2, and \(Z\) is minimal with respect to spanning \(q\); that is $q$ lies in the linear span of $Z \setminus q$, but not in the span of any proper subscheme of $Z\setminus q$.
\end{definition}

\begin{lemma}\label{lemma: properties critical scheme} 
Let $Z\subset X$ be a critical scheme for a couple $(A,q)$. Then $\dim \langle Z\rangle =\deg(Z) -2$, $q\in \langle Z\setminus \{q\}\rangle$ and $\dim \langle Z'\rangle = \deg(Z')-1$ for all $Z'\subsetneq Z$.
\end{lemma}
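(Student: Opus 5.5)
The plan is to derive everything from the minimality condition \ref{def critical minimality} together with \Cref{remark: algebra lineare per schemini di grado at most 2}. The statement $q\in\langle Z\setminus\{q\}\rangle$ is condition \ref{def critical span Z} itself, so only the two dimension statements need proof. Write $Z_0:=Z\setminus q$, so $\deg(Z)=\deg(Z_0)+1$, and since $q\in\langle Z_0\rangle$ we have $\langle Z\rangle=\langle Z_0\rangle$.

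First I show that $Z_0$ is linearly independent, i.e. $\dim\langle Z_0\rangle=\deg(Z_0)-1$. Suppose instead $\dim\langle Z_0\rangle<\deg(Z_0)-1$. Every connected component of $Z_0$ has degree at most $2$, so \Cref{remark: algebra lineare per schemini di grado at most 2} gives a proper subscheme $W\subsetneq Z_0$ with $\langle W\rangle=\langle Z_0\rangle$. Then $q\in\langle W\rangle$, contradicting minimality. Hence $\dim\langle Z\rangle=\dim\langle Z_0\rangle=\deg(Z_0)-1=\deg(Z)-2$.

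Next let $Z'\subsetneq Z$. Since $q$ is a reduced connected component of $Z$, either $q\not\subset Z'$ or $Z'=q\cup W$ with $W\subseteq Z_0$.

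\begin{itemize}
\item If $q\not\subset Z'$, then $Z'\subseteq Z_0$. A subscheme of a linearly independent scheme is linearly independent: if $\dim\langle Z'\rangle<\deg(Z')-1$, then \Cref{prel01}, or simply counting conditions imposed on hyperplanes (degree is additive, and the number of independent conditions drops by at most the degree removed), forces $\dim\langle Z_0\rangle<\deg(Z_0)-1$, which is false.
\item If $Z'=q\cup W$, then $W\subsetneq Z_0$ because $Z'\neq Z$. Minimality gives $q\notin\langle W\rangle$, so $\dim\langle Z'\rangle=\dim\langle W\rangle+1$. By the first case $W$ is independent, so $\dim\langle Z'\rangle=\deg(W)=\deg(Z')-1$.
\end{itemize}

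The only delicate point is that "subschemes of independent schemes are independent". I justify it with the inequality $h^1(\Ii_{W}(1))\le h^1(\Ii_{Z}(1))$ from \Cref{succ esatta}, applied on $\PP^N$ (or on the linear span), where independence is equivalent to $h^1=0$.

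One also has to check that the subscheme $W$ produced by \Cref{remark: algebra lineare per schemini di grado at most 2} is proper in $Z_0$ rather than merely in $Z$. This holds because the remark is applied to $Z_0$ itself.
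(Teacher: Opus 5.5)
Your proof is correct, and for $\dim\langle Z\rangle=\deg(Z)-2$ it is the paper's argument: if $Z\setminus q$ were dependent, the remark on schemes whose components have degree at most $2$ would give a proper $W\subsetneq Z\setminus q$ with $q\in\langle W\rangle$, contradicting minimality. You also prove that every $Z'\subsetneq Z$ is independent, using $h^1$-monotonicity on $\PP^N$ and the split $Z'=q\cup W$. The paper's proof leaves this third assertion implicit.
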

\begin{proof}
First notice that since $Z$ is a critical scheme $\langle Z\rangle=\langle Z\setminus q\rangle \supseteq \langle W\cup q \rangle $ for all $W\subset Z\setminus q$.  Assume by contradiction that $\dim \langle  Z\setminus q  \rangle<\deg(Z\setminus q)-1$. By \Cref{remark: algebra lineare per schemini di grado at most 2} there exists $U\subset Z\setminus q$ with $\deg(U)=\dim \langle Z\setminus q \rangle+1 $ such that $\dim \langle U \rangle=\dim \langle Z\setminus q\rangle=\langle Z \rangle \supseteq \langle U\cup q \rangle$. But this is impossible by the minimality condition of $Z$.
\end{proof}

\begin{remark}\label{remark: riformulazione proprietà schema critico in cohomologia per casi belli}
Assume that $X$ is embedded in $\PP^N$ via a complete linear system.
    \Cref{def critical span Z} of \Cref{def: critical scheme for a pair} says that $\langle Z \rangle=\langle Z\setminus q\rangle$ and, from a cohomologial point of view, this is equivalent to say that $h^1(\Ii_{Z}(1))=h^1(\Ii_{Z\setminus q}(1))+1$ (cf. also \Cref{remark: traduzione Bb ed Ee cohomologica}). The minimality property \ref{def critical minimality} says that $\langle W \rangle\neq \langle W\cup q\rangle$ for all $W\subset Z\setminus q$, and this is translated as $ h^1(\Ii_{W}(1))=h^1(\Ii_{W\cup q}(1))$.
Hence for a critical scheme $Z$ we have $h^1(\Ii_Z(1))\geq 1$ and it is easy to prove that actually $h^1(\Ii_Z(1))=1$ because by \Cref{lemma: properties critical scheme} we have $h^1(\Ii_{Z\setminus q}(1))=0$. Moreover, since $h^1(\Ii_W(1))\leq h^1(\Ii_{Z\setminus q}(1))$ for all $W\subset Z\setminus q$, we have  $h^1(\Ii_{W}(1))=h^1(\Ii_{W\cup q}(1))=0$.
\end{remark}

We remark that a simpler notion of critical scheme already appears in the context of Terracini loci (see \cite[Definition 2.10]{BallicoBrambilla2024}). In that case, Chandler curvilinear lemma \cite{Chandler1994,Chandler2000} immediately implies that any Terracini set admits a critical scheme. In our situation, 
it is not straightforward to apply the curvilinear lemma to prove that a critical scheme for a couple always exists for any set $A\in \Bb_r(X)$. Therefore we prove it in the  next lemma.

\begin{lemma}\label{lemma: existence critical scheme characterization}
    Fix a set $A$ of $r$ smooth
    points of $X\subset \PP^N$ such that $\langle 2A\rangle \ne \PP^N$ and let $q\in X_{\reg}\setminus A$. A critical scheme for the pair $(A,q)$ exists
if and only if $q\in \langle 2A\rangle$, i.e. if and only if $A\in \Bb_r(X)$ and $q\in B(A)$. 
\end{lemma}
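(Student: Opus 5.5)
One direction is immediate. Suppose $Z$ is a critical scheme for $(A,q)$. Condition \ref{def critical span Z} gives $q\in\langle Z\setminus q\rangle$. Condition (i) gives $(Z\setminus q)_{\red}\subseteq A$, and each connected component of $Z\setminus q$ has degree at most $2$ and is supported at a smooth point $a\in A$. Such a component is either the reduced point $a$ or a tangent vector at $a$, so in either case it is contained in $2a$. Hence $Z\setminus q\subseteq 2A$, so $q\in\langle 2A\rangle$. Together with $\langle 2A\rangle\ne\PP^N$ and $q\in X_{\reg}\setminus A$, this says exactly that $A\in\Bb_r(X)$ and $q\in B(A)$.

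For the converse, assume $q\in\langle 2A\rangle$. Set $n=\dim X$ and, for each $a\in A$, choose tangent vectors $v_{a,1},\dots,v_{a,n}\subset 2a$, i.e. degree-$2$ subschemes of $X$ supported at $a$ whose spans are lines through $a$ spanning $T_aX$. Let $Y$ be the union of all these $v_{a,i}$. Then $Y$ is a scheme all of whose connected components have degree at most $2$, \emph{after} we refine it, and $\langle Y\rangle=\langle 2A\rangle\ni q$.

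The main obstacle is that $\bigcup_i v_{a,i}$ is not itself a union of degree-$\le2$ connected components, since all the $v_{a,i}$ share the support $a$. The family of subschemes allowed in \Cref{def: critical scheme for a pair} has one component per point, of degree at most $2$. So I would not take the union. Instead I will realize $q$ in a span using at most one tangent direction per point of $A$, by adapting the choice of direction. Write $q=\sum_{a\in A}w_a$ in the affine cone, where $w_a$ lies in the cone $\widehat{T_aX}$ over the tangent space. Each $w_a$ is either $0$, or a multiple of $\hat a$, or spans together with $\hat a$ a $2$-dimensional space. In the last case that space is $\langle v_a\rangle$ for a unique tangent vector $v_a\subset 2a$ (the line joining $a$ to $[w_a]$ lies in $T_aX$). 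In the other cases we use the reduced point $a$, or drop $a$.

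Let $Z_0$ be the union of $q$ with these components $v_a$ or $a$. Then $Z_0$ satisfies (i), (ii) and \ref{def critical span Z}, since $q\in\langle Z_0\setminus q\rangle$ by construction. Now apply minimization. Among all subschemes $W\subseteq Z_0\setminus q$ with $q\in\langle W\rangle$, choose one of minimal degree and set $Z=W\cup q$. Subschemes of a scheme whose components have degree at most $2$ again have this property, and their supports stay in $A$. Condition \ref{def critical minimality} then holds by minimality of the degree. The set of such $W$ is nonempty and finite in degree, so the minimum exists. Note that $q\notin\langle\emptyset\rangle$, so $W\ne\emptyset$, and $q\notin Z_{\red}\setminus\{q\}$ since $q\notin A$; hence $Z$ is a critical scheme for $(A,q)$.
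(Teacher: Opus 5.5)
Your proof is correct, and for the converse it takes a different route from the paper's.

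\textbf{The paper's route.} The paper picks a minimal $F\subseteq A$ with $q\in\langle 2F\rangle$ and replaces the fat points one at a time. For $p\in F$ and $G=F\setminus\{p\}$, it uses that $\langle 2p\rangle$ is a union of lines through $p$ to choose a degree-$\le 2$ subscheme $v\subset 2p$ with $\langle v\rangle\cap\langle 2G\rangle=\emptyset$ and $\langle v\rangle\cap\langle 2G\cup q\rangle\neq\emptyset$. A dimension count then gives $q\in\langle v\cup 2G\rangle$, and the procedure is iterated on the remaining points of $G$.

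\textbf{Your route.} You handle all points at once. Writing $\hat q=\sum_a w_a$ with $w_a\in\widehat{T_aX}$ selects, at each $a$, a single tangent direction, the reduced point, or nothing. The minimality condition (iv) then comes for free by passing to a subscheme $W\subseteq Z_0\setminus q$ of minimal degree with $q\in\langle W\rangle$.

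\textbf{What each buys.} Your argument is shorter and verifies (iv) explicitly, which the paper's iteration leaves implicit. It also avoids the paper's requirement $\langle v\rangle\cap\langle 2G\rangle=\emptyset$, which tacitly needs $p\notin\langle 2G\rangle$ and so depends on the order in which points are peeled off. In exchange, the paper's step-by-step construction tracks exactly how the span grows: each replacement raises its dimension by $\deg(v)$.

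\textbf{Implicit facts.} Two facts you use are standard and fine. First, $\widehat{\langle 2A\rangle}=\sum_{a\in A}\widehat{T_aX}$, because the ideal of $2A$ is the intersection of the ideals of the $2a$. Second, since $a$ is smooth, every line through $a$ in $T_aX$ equals $\langle v\rangle$ for a unique degree-$2$ subscheme $v\subset X$ supported at $a$.

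\textbf{Cleanup.} Delete the paragraph introducing $Y$. The union of $v_{a,1},\dots,v_{a,n}$ is the whole fat point $2a$, of degree $\dim X+1$, so the claim that its components have degree at most $2$ ``after we refine it'' is false. Your actual argument never uses it.
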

\begin{proof}
If $(A,q)$ has a critical scheme $Z$, by definition we have $q\in \langle Z\setminus q\rangle\subseteq \langle 2A\rangle$.

Now assume $q\in \langle 2A\rangle$. Let $F\subseteq A$ be a minimal subset of $A$ such that $q\in \langle 2F \rangle $ and notice that $F\ne \emptyset$. Fix $p\in F$ and call $G=F\setminus \{p\}$ where now $G$ might be empty.
By the minimality of $F$ we know that $q\in \langle 2F \rangle$ and $q\notin \langle 2G\rangle$, so in particular $\dim \langle 2G\cup q\rangle =\dim \langle 2F\rangle+1 $. Hence $ \langle q\cup 2F\rangle\cap \langle 2p\rangle\neq \emptyset$. 

The linear space $\langle 2p\rangle$ is the union of lines through $p$ contained in $\langle 2p\rangle$.
Hence there is a minimal $v\subset 2p$ with $\deg(v) \le 2$ such that $\langle v\rangle \cap \langle 2G\rangle =\emptyset$ and $\langle v\rangle \cap \langle 2G\cup q\rangle \ne \emptyset$.

In particular we have $\dim \langle  v \cup 2G\rangle = \dim \langle  2G\rangle +\deg(v)$.  To conclude, we observe that we still have that $q\in \langle v  \cup 2G   \rangle$ for the following reason. Since $\langle v\rangle \cap \langle 2G\cup q \rangle\neq \emptyset $ we have
\begin{align*}
 \dim \langle 2G\cup q\cup v\rangle& =\dim \langle  2G\cup q\rangle+\dim \langle v\rangle-\dim (\langle v\rangle \cap \langle 2G\cup q\rangle )\\
 &\leq \dim \langle 2G\rangle+1+\deg(v)-1=\dim\langle v\cup 2G\rangle .   
\end{align*}
The procedure we just described is fundamental for the construction of a critical scheme for a couple. Indeed, to construct a critical scheme for the couple $(A,q) $, it suffices to iterate the above construction as we now explain.

Assume for the moment that $\#F=1$, so that $G=F\setminus \{ p\}=\emptyset$. Applying the procedure just described, we get that the scheme $Z:= v$ is a critical scheme for the couple $(A,q)$ and in particular, since $q\notin F$ we have that $\deg(v)=2$. Otherwise $\#F\geq 2$. We first apply the above procedure to $G=F\setminus \{ p\}$. This gives that $q\in \langle G\cup v\rangle$. Now, since $\#F\geq 2$, we have that $G\neq \emptyset$, so we can take $p_1\in G$, set $G'=G\setminus \{p_1 \}$ and apply again the above procedure. Hence $q\in \langle  G'\cup v \cup v_1\rangle$ for some $v_1$ with $\deg(v_1)\leq 2$. If $\#F=2$ then we have that the zero-dimensional scheme $Z:=v\cup v_1 $ is a critical scheme for $(A,q)$, otherwise we can keep going in the same manner.
\end{proof}

\subsection{Connection with Terracini loci}\label{connection terracini}

We conclude this section by exploring the relation of the objects $\Bb_r(X),\Ee_r(X)$ and the $r$-th Terracini locus of a variety \cite{BCterracini, BBS23}, whose definition we recall here.
\begin{definition}\label{def: terracini locus}
   Let $X\subset \PP^N$ be integral and nondegenerate. The $r$-th \emph{Terracini locus of $X$} is
$$
\TT_r(X)=\{ A\in \Ss_r(X)\, \vert \, \langle 2A\rangle\neq \PP^N \text{ and }\dim \langle 2A\rangle< r(\dim X+1)-1 \}.
$$

\end{definition}
A priori there is no apparent relation between $\TT_r(X) $ and $\Bb_r(X)$ or $\Ee_r(X)$ and one can construct examples of  $A\in \TT_r(X) $ that are not in $\Bb_r(X)$, as well as examples of $A$ in either $\Bb_r(X)$ or $\Ee_r(X)$ but not in $\TT_r(X)$ as shown in the following.

\begin{example}\label{example: confronto terracini con Bb ed Ee}
Let us exhibit explicit examples proving the following statements:
$$
A\in \TT_r(X) \ \ \ \substack{ \overset{\ref{a}}{\centernot\implies} \\ \overset{\ref{b}}{\centernot\Longleftarrow}} \ \ \ 
A\in \Bb_r(X) \text{ and } A\in \TT_r(X) \ \ \ \substack{ \overset{\ref{c}}{\centernot\implies} \\ \overset{\ref{d}}{\centernot\Longleftarrow}} \ \ \ 
A\in \tilde{\Ee}_r(X).
$$
\begin{enumerate}[label=\alph*)]
    \item\label{a} Let $\Cc \subset \PP^2$ be a general curve of degree 4. It is classically known that $\Cc$ has 28 bitangents \cite{MR1578003} so in particular $\TT_2(\Cc)\neq \emptyset$. However for each $A\in \TT_2(\Cc)$, the bitangent $\langle 2A \rangle$ does not intersect $\Cc$ in other points, so $A\notin \Bb_2(\Cc)$.
    \item\label{b} For a plane curve $\Cc\subseteq \PP^2$ of degree $d\geq 3$ we have $\Bb_1(\Cc)=\Ss_1(\Cc)$ while $\TT_1(\Cc)=\emptyset$. Another easy example can be constructed as follows. Let $A\notin \TT_2(\mathcal{C})$ for some smooth irreducible degree 5 curve $\Cc\subset \PP^4$ and notice that such an $A$ always exists (cf. for instance \cite[Proposition 4.1]{GSTT}). 
    Then $\langle 2A\rangle$ is a hyperplane of $\PP^4$ intersecting the curve in 5 points counted with multiplicity, hence there exists a point $p\in \mathcal{C}_{\reg}$ with $p\notin  A$ such that $p\in\langle 2A \rangle  $, so that $A\in \Bb_2(\Cc)$. In particular, notice that $A\in \Bb_2(\Cc)\setminus \Ee_2(\Cc)$.  
    \item\label{c} Since for a variety $X\subset \PP^N$ we have $\tilde{\Ee}_r(X)\subseteq \Ee_r(X) \subseteq \Bb_r(X) $, \cref{a} provides an example of an element in $\TT_r(X) $ that does not lie in $\tilde{\Ee}_r(X)$.
    \item\label{d} Let $Y\subset \PP^N \subset \PP^{N+1}$ be an irreducible nondegenerate smooth variety in $\PP^N$ and fix a point $q\in \PP^{N+1}\setminus \PP^N$. Let $X=\cup_{y\in Y}\langle y,q\rangle$ be the cone over $Y$ with vertex $q$. Notice that for every point $p\in X\setminus \{ q\} $ the line $\ell_p=\langle p,q \rangle $ is included in $X$ and moreover, since $Y$ is smooth, every point of $\ell_p\setminus \{q \}$ is smooth. 
    Now, for each $x\in \ell_p\setminus \{ q\}$ the tangent space $T_xX$ contains $\ell_p$ and this means that $\ell_p\setminus \{ q \}$ is contained in the contact locus of $\{ x\}$. Hence we just proved that $\tilde{\Ee}_1(X)\neq \emptyset$ while clearly $\TT_1(X)=\emptyset$.
    The previous argument can easily be generalized, making any cone $X$ a straightforward example of a variety for which $\tilde{\Ee}_r(X)\neq \emptyset$ for any positive integer $r$. 
 \end{enumerate}

\end{example}

However, starting from sets in $\Bb_r(X)$ one can easily construct $(r+1)$-Terracini sets.
\begin{remark}\label{remark: dato A in Ee costruisco sempre una r+1 upla terracini}
     If $A\in \Bb_r(X)$ then for all $p\in B(A)$ we have $p\in \langle 2A\rangle$, so the tangent space $T_pX$ and the linear space $\langle 2A\rangle$ intersect nontrivially, hence if $\langle2(A\cup p)\rangle\neq\PP^N$ we have $A\cup \{ p\}\in \TT_{r+1}(X)$.

Moreover,  if $ A\in \Ee_r(X)$ then $A\cup \{ p\}\in \TT_{r+1}(X)$ for all $p\in C(A)\setminus A$.
\end{remark}

\section{Veronese varieties}\label{section: veronese}
The purpose of this section is to better understand the objects $\tilde{\Ee}_r(X),\Ee_r(X),\Bb_r(X)$ when $X$ is a Veronese variety. For this part we will use the following notation. Fix integers $n,d\geq 1$, set $N=\binom{n+d}{d}-1$ and let $\nu_d:\PP^n\rightarrow \PP^N$ be the $d$-th Veronese embedding of $\PP^n$ via the complete linear system of divisors of degree $d$. We denote by $V_n^d=\nu_d(\PP^n)$.

As a first instance of computation let us treat separately the case $n=1$ of rational normal curves. The next example shows that $\Bb_r(V^d_1)$ is always empty and we will prove it explicitly, although it could also be easily derived using \Cref{remark: dato A in Ee costruisco sempre una r+1 upla terracini}. 
\begin{example}[Rational normal curves]\label{example: Bb vuoto crn}
   Let $V^d_1\subset \PP^d$ be a rational normal curve of degree $d\geq 2$ and let $A\in \Ss_r(\PP^1)$. Assume by contradiction that $\nu_d(A)\in\Bb_r(V^d_1)$, and let $p\in \PP^1\setminus A$ such that $\nu_d(p)\in B(\nu_d(A))$. By \Cref{remark: traduzione Bb ed Ee cohomologica}, on one hand we have
 $h^0(\Ii_{2A\cup p,\PP^1}(d))>0$, on the other hand  $h^1(\Ii_{2A\cup p,\PP^1}(d))\ge 1$.
 This is a contradiction because $h^i(\Ii_{2A\cup p,\PP^1}(d))=h^i(\Oo_{\PP^1}(d-(2r+1)))$.
\end{example}

The following result is a well-known useful tool.
\begin{lemma}\label{lemma: famoso}\cite[Lemma 34]{BGI11}
Fix integers $n\ge 1$ and $d\ge 1$. Let $Z\subset \PP^n$ be a zero-dimensional scheme such that $\deg(Z)\le 2d+1$ and $h^1(\Ii_Z(d)) >0$. Then there is a line $L\subseteq \PP^n$ such that $\deg(Z\cap L)\ge d+2$.
\end{lemma}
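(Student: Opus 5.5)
We would argue by double induction on $n$ and $d$, using the residual exact sequence with respect to a hyperplane. We also use the standard fact that any zero-dimensional scheme $W\subset\PP^n$ with $\deg(W)\le k+1$ satisfies $h^1(\Ii_W(k))=0$. This fact can itself be proved by the same residual induction, or quoted as classical.

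\emph{Base cases.} For $n=1$ we have $h^1(\Ii_Z(d))=h^1(\Oo_{\PP^1}(d-\deg Z))$. This is nonzero only if $\deg(Z)\ge d+2$, so we take $L=\PP^1$. For $d=1$ we have $\deg(Z)\le 3$, and $h^1(\Ii_Z(1))>0$ means $Z$ is linearly dependent. Since every scheme of degree $\le 2$ is independent, $Z$ has degree $3$ and spans only a line $L$, so $\deg(Z\cap L)=3=d+2$.

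\emph{Inductive step.} Take $n\ge 2$ and $d\ge 2$. Choose a hyperplane $H\subset\PP^n$ such that $\deg(Z\cap H)$ is maximal, and let $\Res_H(Z)$ be the residual scheme. Since $n\ge2$, $H$ can be chosen to contain any degree-$2$ subscheme, so $\deg(Z\cap H)\ge\min\{2,\deg Z\}$. If $\deg Z\le d+1$ the hypothesis $h^1>0$ fails, so $\deg(Z\cap H)\ge 2$ and hence $\deg(\Res_H(Z))\le 2d-1=2(d-1)+1$. The exact sequence
$$0\to \Ii_{\Res_H(Z)}(d-1)\to \Ii_Z(d)\to \Ii_{Z\cap H,H}(d)\to 0$$
shows that either $h^1(H,\Ii_{Z\cap H,H}(d))>0$ or $h^1(\Ii_{\Res_H(Z)}(d-1))>0$.

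In the first case we apply induction on $n$ to $Z\cap H\subset H\cong\PP^{n-1}$, which has degree $\le 2d+1$. This gives a line $L\subset H$ with $\deg(Z\cap L)\ge\deg(Z\cap H\cap L)\ge d+2$.

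In the second case we apply induction on $d$. This gives a line $L$ with $\deg(\Res_H(Z)\cap L)\ge d+1$, and hence $\deg(Z\cap L)\ge d+1$ because $\Res_H(Z)\subseteq Z$. This is one short of what we need, and closing this gap is the main point to handle. We handle it by contradiction using the maximality of $H$. Some hyperplane contains $L$, so maximality forces $\deg(Z\cap H)\ge\deg(Z\cap L)\ge d+1$. Then $\deg(\Res_H(Z))\le 2d+1-(d+1)=d=(d-1)+1$, so $h^1(\Ii_{\Res_H(Z)}(d-1))=0$ by the standard fact above. This is a contradiction, so only the first case can occur, and the first case already yields the required line.

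The only delicate point is therefore the choice of $H$ maximizing $\deg(Z\cap H)$, since it is what excludes the residual branch. We should also check that the auxiliary vanishing statement for $\deg(W)\le k+1$ is established before it is used, either by citing it or by the same residual induction.
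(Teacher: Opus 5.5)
Your proof is correct. The induction on $(n,d)$ is well founded: the trace branch uses $(n-1,d)$ and the residual branch uses $(n,d-1)$. The bound $\deg(\Res_H(Z))\le 2(d-1)+1$ is justified, since a hyperplane of $\PP^n$ with $n\ge 2$ contains any degree-$2$ subscheme of $Z$. The residual branch is indeed impossible. You don't even need the vanishing fact there: $\deg(\Res_H(Z)\cap L)\ge d+1$ forces $\deg(Z\cap H)\le d$, while maximality forces $\deg(Z\cap H)\ge \deg(Z\cap L)\ge d+1$.

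The paper gives no proof of this statement; it is quoted from \cite{BGI11}. The natural comparison is therefore with the paper's proof of the Segre--Veronese generalization, \Cref{lemma: famoso per SV}. That proof uses the same method: a divisor $H\in|\Oo_Y(\varepsilon_k)|$ maximizing $\deg(Z\cap H)$, the residual sequence, and induction on $t$ and $\dim Y$. The two arguments differ in how the residual branch is closed.

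\begin{itemize}
\item In $\PP^n$ every line lies in a hyperplane, so maximality of $H$ alone suffices, as in your argument.
\item On $Y$, a curve $R$ of multidegree $\varepsilon_k$ lies in no divisor of $|\Oo_Y(\varepsilon_k)|$ when $n_k=1$. So the paper must pass to a second divisor $M\in|\Ii_R(\varepsilon_1)|$ and a second residual sequence.
\item Moreover, on $Y$ the maximal divisor may meet $Z$ only in degree $1$. This forces a separate treatment of $Y=(\PP^1)^{\times k}$ with all $\pi_{i|Z}$ embeddings, via the projection argument of \Cref{example: lemma famoso tutti P1 e gradi uguali}.
\end{itemize}

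Your proof is the clean $k=1$ case, in which neither complication can occur.
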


\begin{remark}\label{example: se ci sono abbastanza punti su una retta questa è nel base locus}
Fix a line $L\subset \PP^n$, $n\ge 2$, and an integer $d\ge 2$. Let $A\subset \PP^n$ be a finite set such that $\#(A\cap L)\ge \lceil (d+1)/2\rceil$. Since $\deg(2A\cap L) =2\#(A\cap L)\ge d+1$, $L$ is contained in the base locus of $\Ii_{2A}(d)$. 
\end{remark}

\begin{lemma}\label{lemma: dato A in Bb esiste retta contenente un po di pt}
    Let $n,d\geq 2$, $r\leq d$. Let $A\in \Ss_r(\PP^n)$ and assume $\nu_d(A)\in \Bb_r(V^d_n)$. Fix $q\in \PP^n$ such that $\nu_d(q)\in B(\nu_d(A))$. Then, there exists a line $L\subset \PP^n$ such that $\#(A\cap L)\geq \lceil (d+1)/2 \rceil$ and $ q\in L$.
\end{lemma}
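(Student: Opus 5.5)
The plan is to translate the hypothesis into a statement about a critical scheme on $\PP^n$ and then apply \Cref{lemma: famoso}. Since $\nu_d(A)\in \Bb_r(V^d_n)$ and $\nu_d(q)\in B(\nu_d(A))$, we have $\langle 2\nu_d(A)\rangle\neq \PP^N$ and $\nu_d(q)\in\langle 2\nu_d(A)\rangle$. So \Cref{lemma: existence critical scheme characterization} gives a critical scheme $Z$ for the pair $(\nu_d(A),\nu_d(q))$. We regard $Z$ as a zero-dimensional subscheme of $\PP^n$ with $q\in Z_{\red}\subseteq A\cup\{q\}$, $Z\subseteq 2A\cup q$, and every connected component of degree at most $2$.

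Because $V^d_n$ is embedded by the complete linear system $|\Oo_{\PP^n}(d)|$, we can use \Cref{remark: riformulazione proprietà schema critico in cohomologia per casi belli} and \Cref{lemma: properties critical scheme}. They give
\[
h^1(\Ii_Z(d))=1 \quad\text{and}\quad h^1(\Ii_{Z'}(d))=0 \text{ for every } Z'\subsetneq Z.
\]
Since $Z\subseteq 2A\cup q$ and each component of $Z$ has degree at most $2$, we get $\deg(Z)\le 2r+1\le 2d+1$, using $r\le d$. Hence \Cref{lemma: famoso} applies and yields a line $L\subseteq\PP^n$ with $\deg(Z\cap L)\ge d+2$.

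Next we show that $q\in L$. Suppose not. Then $Z\cap L\subseteq Z\setminus q$ is a proper subscheme of $Z$. On the other hand, any subscheme of a line of degree at least $d+2$ fails to impose independent conditions on degree-$d$ forms, because $h^1(\Oo_L(d-\deg))>0$. Therefore $h^1(\Ii_{Z\cap L}(d))>0$, contradicting the vanishing for proper subschemes. So $q\in L$, and $q$ is a simple (degree $1$) component of $Z\cap L$.

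Finally we count degrees. The remaining components of $Z\cap L$ are supported at points of $A\cap L$, and each has degree at most $2$. Hence
\[
d+2\le \deg(Z\cap L)\le 1+2\#(A\cap L),
\]
so $\#(A\cap L)\ge (d+1)/2$, that is, $\#(A\cap L)\ge\lceil (d+1)/2\rceil$.

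The main delicate point is the step forcing $q\in L$. It relies on the full minimality property of the critical scheme: every proper subscheme of $Z$, not merely $Z\setminus q$, is linearly independent. That property is exactly what \Cref{lemma: properties critical scheme} provides.
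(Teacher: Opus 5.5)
Your proposal is correct and follows the paper's proof essentially step by step: a critical scheme $Z$ from \Cref{lemma: existence critical scheme characterization}, a line $L$ with $\deg(Z\cap L)\ge d+2$ from \Cref{lemma: famoso}, the vanishing of $h^1(\Ii_{Z'}(d))$ for every $Z'\subsetneq Z$ to force $Z\cap L=Z$ (you phrase this as $q\in L$ by contradiction), and the same count $d+2\le 1+2\#(A\cap L)$. One caveat: \Cref{lemma: properties critical scheme} rests on the second assertion of \Cref{remark: algebra lineare per schemini di grado at most 2}, which is false in general (three collinear points with tangent directions independent modulo their line give $\deg(Z)=6$ and $\dim\langle Z\rangle=4$, yet no degree-$5$ subscheme spans $\langle Z\rangle$), so you may prefer to obtain $q\in L$ directly from the minimality condition: if $q\notin L$, then $Z\cap L$ has at most $r\le d$ components and degree at least $d+2$, so some degree-$2$ component $v\subset L$ of $Z\setminus q$ can be replaced by its reduced point without changing $\langle\nu_d(Z\setminus q)\rangle$ (every subscheme of $L$ of degree at least $d+1$ spans $\langle\nu_d(L)\rangle$), contradicting minimality.
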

\begin{proof}
   Take $A\in \Ss_r(\PP^n)$ with $\nu_d(A)\in \Bb_r(V^d_n)$ and fix $q\in \PP^n\setminus A$ such that $\nu_d(q)\in B(\nu_d(A))$. Let $Z\subset \PP^n$ be a zero-dimensional scheme such that $\nu_d(Z)$ is a critical scheme for the pair $(\nu_d(A),\nu_d(q))$; such a $Z$ always exists by \Cref{lemma: existence critical scheme characterization}.
Hence $\deg(Z)\le 2r+1\leq 2d+1$ and $h^1(\Ii_Z(d)) >0$. 
\Cref{lemma: famoso} gives the existence of a line $L$ such that $\deg(Z\cap L)\ge d+2$.
Since $q$ is a connected component of $Z$ and all connected components of $Z$ have degree at most $2$ we get $\#(L\cap A)\ge \lceil (d+1)/2\rceil$.

We now want to prove that $Z\subset L$ because this would give in particular that $q\in L$. Note that $h^1(\Ii_{Z\cap L}(d))=h^1(\Ii_{Z\cap L,L}(d))$, since $L=\PP^1$ is arithmetically Cohen-Macaulay, and $h^1(\Ii_{Z\cap L,L}(d))\ge 1$ since $\deg(Z\cap L)\ge d+2$. By \Cref{lemma: properties critical scheme} (cf. also \Cref{remark: riformulazione proprietà schema critico in cohomologia per casi belli}) we know that $h^1(\Ii_{Z'}(d))=0$ for any $Z'\subsetneq Z$. Hence we must have $Z\cap L= Z$, or equivalently that $Z\subset L$. 
\end{proof}

Our next result is a characterization of emptiness for $\Bb_r(V^d_n)$, along with a detailed description of the first cases in which $\Bb_r(V^d_n)$ is not empty, see also \Cref{fig:placeholder}.
\begin{theorem}\label{theorem: Bb per veronesi}
Fix integers $n\ge 2$ and $d\ge 2$. Then 
$$
\Bb_r(V_n^d)\neq \emptyset \text{ if and only if }2r\ge d+1.
$$

Assume $\lceil (d+1)/2\rceil\leq r \leq d$ and let  $A\in \Ss_r(\PP^n)$ be such that $\nu_d(A)\in \Bb_r(V^d_n)$. 
\begin{itemize}
\item\label{description Bb: r=d odd at least 5}
If $r=d\ge5$ and $d$ is odd, then either there is a line $L\subset \PP^n$ containing at least $(d+1)/2$ points of $A$ and such that 
$B(\nu_d(A)) =\nu_d(L\setminus  A)$, or there are two distinct lines $L,R\subset \PP^n$ intersecting in a unique point of $A$, each containing $(d+1)/2$ points of $A$, and such that $B(\nu_d(A)) =\nu_d((L\cup R)\setminus A)$.
\item\label{description Bb: r=d even} If either $r<d$, or $r=d$ and $d$ is even, then there is a line $L\subset \PP^n$ that contains at least $\lceil(d+1)/2\rceil$ points of $A$ and such that 
$B(\nu_d(A)) =\nu_d(L\setminus  A)$.
\item\label{description Bb: r=d=3} Otherwise $r=d=3$ and $\Bb_3(V^d_n)=\Ss_3(V^d_n)$. If $A\in \Ss_3(\PP^n)$ is general then $B(\nu_d(A))\cup \nu_d(A)$ is given by the three distinct lines passing each of them through two points of $A$. Otherwise $A\subset L$ for some line $L\subset \PP^n$ and $B(\nu_d(A))=\nu_d(L\setminus A)$. 
\end{itemize}
\end{theorem}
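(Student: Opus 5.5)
For the equivalence, I will take the two directions separately. If $2r\ge d+1$, choose a line $L\subset\PP^n$ and $\lceil (d+1)/2\rceil$ points of $A$ on $L$, with the remaining points general. By \Cref{example: se ci sono abbastanza punti su una retta questa è nel base locus}, $L$ lies in the base locus of $\Ii_{2A}(d)$, so $\nu_d(L\setminus A)\subset\langle 2\nu_d(A)\rangle$. It remains to check $\langle 2\nu_d(A)\rangle\ne\PP^N$, i.e. $h^0(\Ii_{2A}(d))\neq 0$. For this I exhibit a degree-$d$ form singular at all of $A$: for $r\le d$ and $n\ge 2$, take a product of lines through pairs of points of $A$, plus the line $L$ counted twice, and use degree bookkeeping. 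For large $r$ I instead put all of $A$ on a few lines, or I will argue directly that such a form exists. If this bookkeeping becomes awkward, it suffices to put many points on $L$ and take the forms vanishing doubly on a bounded configuration.

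Conversely, suppose $2r\le d$ and $\nu_d(A)\in\Bb_r$. Then $r\le d$, so \Cref{lemma: dato A in Bb esiste retta contenente un po di pt} gives a line with at least $\lceil(d+1)/2\rceil> r$ points of $A$, which is a contradiction. Hence $\Bb_r=\emptyset$.

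For the description when $\lceil(d+1)/2\rceil\le r\le d$, fix $q$ with $\nu_d(q)\in B(\nu_d(A))$. By \Cref{lemma: dato A in Bb esiste retta contenente un po di pt}, $q$ lies on a line $L_q$ with $\#(A\cap L_q)\ge\lceil(d+1)/2\rceil$. Conversely, every such line lies in $B(\nu_d(A))\cup\nu_d(A)$ by \Cref{example: se ci sono abbastanza punti su una retta questa è nel base locus}. So $B(\nu_d(A))$ is exactly the union of $\nu_d(L\setminus A)$ over the lines $L$ containing at least $\lceil (d+1)/2\rceil$ points of $A$, and the remaining task is to count such lines.

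Two distinct lines share at most one point of $A$. Hence two such lines need $2\lceil(d+1)/2\rceil-1\le r\le d$. For $d$ even this reads $d+1\le d$, which is impossible, so there is a single line. The same holds if $r<d$ and $d$ is odd, since then we need $d\le r<d$. For $r=d$ odd, two lines are possible exactly when they meet at a point of $A$ and each carries $(d+1)/2$ points. Three lines would need at least $3(d+1)/2-3\le d$, i.e. $d\le 3$; so for $d\ge5$ we get at most two lines.

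The case $d=3$ is special: here $\lceil(d+1)/2\rceil=2$, so every line through two points of $A$ qualifies. Thus $\Bb_3=\Ss_3$, provided $h^0(\Ii_{2A}(3))\ne0$. This holds because $A$ spans at most a plane, and a union of three lines, or $L$ times a quadric singular along $L$, gives a cubic singular at $A$. For general $A$ this yields the three lines; for collinear $A$ it yields just $L$.

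The main obstacle I expect is the nonvanishing $h^0(\Ii_{2A}(d))\ne0$ for the constructed configurations, since emptiness of $B$ otherwise is not an issue.
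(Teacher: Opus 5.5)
Your proposal is correct and is essentially the paper's proof. The ``if'' direction comes from the remark that a line carrying at least $\lceil (d+1)/2\rceil$ points of $A$ lies in the base locus of $\Ii_{2A}(d)$. Both the ``only if'' direction and the description of $B(\nu_d(A))$ come from \Cref{lemma: dato A in Bb esiste retta contenente un po di pt} together with the fact that two distinct lines share at most one point of $A$; your phrasing of $B(\nu_d(A))$ as the union of $\nu_d(L\setminus A)$ over all such lines is a tidier version of the paper's case analysis.

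The nonvanishing $h^0(\Ii_{2A}(d))\neq 0$, which the paper leaves implicit, needs no bookkeeping: put all $r$ points on $L$ and use $h^2g$, with $h$ a linear form vanishing on $L$ and $\deg g=d-2$. Your first attempt, taking the remaining points general, can fail for $r>d$ (e.g.\ $n=d=2$, $r=3$).
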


\begin{proof}
By \Cref{example: se ci sono abbastanza punti su una retta questa è nel base locus} we have that $\Bb_r(V^d_n)\ne \emptyset$ for all integer $r$ such that $2r\ge d+1$.

 Fix a positive integer $r\le d$ such that $\Bb_r(V^d_n)\ne \emptyset$. Take $A\in \Ss_r(\PP^n)$ with $\nu_d(A)\in \Bb_r(V^d_n)$ and fix $q\in \PP^n\setminus A$ such that $\nu_d(q)\in B(\nu_d(A))$. By \Cref{lemma: dato A in Bb esiste retta contenente un po di pt} there exists a line $L\subset \PP^n$ with $\#(L\cap A)\geq \lceil (d+1)/2\rceil$ from which it follows that $2r\ge d+1$. In particular, we have proved that if $\Bb_r(V^d_n)\ne \emptyset$ then $2r\ge d+1$, and this completes the proof of the if and only if statement.

To prove the three items, we focus on the description of $B(\nu_d(A))$ for some $A\subset \PP^n $ where $\nu_d(A)\in \Bb_r(V^d_n)$ with $r\leq d$.
Assume for the moment that $B(\nu_d(A))\nsubseteq \nu_d(L)$ and take $q'\in \PP^n\setminus (L\cup A)$ with $\nu_d(q')\in B(\nu_d(A))$. 
By \Cref{lemma: dato A in Bb esiste retta contenente un po di pt} there exists a line $R\subset \PP^n$ such that $\# (R\cap A)\geq (d+1)/2$ and $q'\in R$.
Notice that $q'\notin L$ 
so the two lines $L $ and $R$ are distinct and intersect in at most one point. Since $r\le d$, $\#(L\cap A)\ge \lceil (d+1)/2\rceil$ and $\#(R\cap A)\ge \lceil (d+1)/2\rceil$,
we must have $d$ odd, that $L$ and $R$ actually intersect in one point of $A$, 
$r=d$, $A\subset L\cup R$ and $\#(L\cap A) =\#(R\cap A) =(d+1)/2$.

In the particular case $r=d=3$ we notice that there is actually one third line $T\subset \PP^n$ containing two of the three points of $A$ and such that $\# (T\cap L)=\#(T\cap R )=1$. So we have $B(\nu_d(A))=\nu_d((L\cup R\cup T)\setminus A)$.

Otherwise since $ r=d> 3$, no other line of $\PP^n$ contains $(d+1)/2$ points of $A$ and we conclude that $B(\nu_d(A))=\nu_d((L\cup R) \setminus A)$. 

Now we are left with the case $B(\nu_d(A))\subseteq \nu_d(L)$. This happens for $r=d=3$, $r< d$ and $r=d$ with $d $ even and clearly $B(\nu_d(A))=\nu_d(L\setminus A)$.
\end{proof}

\begin{figure}[!ht]
    \centering

\tikzset{every picture/.style={line width=0.75pt}} 

\begin{tikzpicture}[x=0.75pt,y=0.75pt,yscale=-1,xscale=1]

\draw    (556.9,31.23) -- (493.15,100.78) ;
\draw  [fill={rgb, 255:red, 0; green, 0; blue, 0 }  ,fill opacity=1 ] (545.92,40.99) .. controls (545.92,39.97) and (546.74,39.14) .. (547.76,39.14) .. controls (548.78,39.14) and (549.61,39.97) .. (549.61,40.99) .. controls (549.61,42.01) and (548.78,42.83) .. (547.76,42.83) .. controls (546.74,42.83) and (545.92,42.01) .. (545.92,40.99) -- cycle ;
\draw  [fill={rgb, 255:red, 0; green, 0; blue, 0 }  ,fill opacity=1 ] (605.42,85.74) .. controls (605.42,84.72) and (606.24,83.89) .. (607.26,83.89) .. controls (608.28,83.89) and (609.11,84.72) .. (609.11,85.74) .. controls (609.11,86.76) and (608.28,87.58) .. (607.26,87.58) .. controls (606.24,87.58) and (605.42,86.76) .. (605.42,85.74) -- cycle ;
\draw  [fill={rgb, 255:red, 0; green, 0; blue, 0 }  ,fill opacity=1 ] (501.17,89.74) .. controls (501.17,88.72) and (501.99,87.89) .. (503.01,87.89) .. controls (504.03,87.89) and (504.86,88.72) .. (504.86,89.74) .. controls (504.86,90.76) and (504.03,91.58) .. (503.01,91.58) .. controls (501.99,91.58) and (501.17,90.76) .. (501.17,89.74) -- cycle ;
\draw    (538.9,34.23) -- (615.15,91.53) ;
\draw    (615.65,85.53) -- (492.65,90.33) ;
\draw    (256.7,23.2) -- (256.49,54.27) -- (256,125.33) ;
\draw    (456.03,23.2) -- (455.33,125.33) ;
\draw    (346.73,29.17) -- (282.98,98.72) ;
\draw  [fill={rgb, 255:red, 0; green, 0; blue, 0 }  ,fill opacity=1 ] (335.75,38.92) .. controls (335.75,37.91) and (336.58,37.08) .. (337.6,37.08) .. controls (338.62,37.08) and (339.44,37.91) .. (339.44,38.92) .. controls (339.44,39.94) and (338.62,40.77) .. (337.6,40.77) .. controls (336.58,40.77) and (335.75,39.94) .. (335.75,38.92) -- cycle ;
\draw  [fill={rgb, 255:red, 0; green, 0; blue, 0 }  ,fill opacity=1 ] (281.75,39.42) .. controls (281.75,38.41) and (282.58,37.58) .. (283.6,37.58) .. controls (284.62,37.58) and (285.44,38.41) .. (285.44,39.42) .. controls (285.44,40.44) and (284.62,41.27) .. (283.6,41.27) .. controls (282.58,41.27) and (281.75,40.44) .. (281.75,39.42) -- cycle ;
\draw  [fill={rgb, 255:red, 0; green, 0; blue, 0 }  ,fill opacity=1 ] (291,87.67) .. controls (291,86.66) and (291.83,85.83) .. (292.85,85.83) .. controls (293.87,85.83) and (294.69,86.66) .. (294.69,87.67) .. controls (294.69,88.69) and (293.87,89.52) .. (292.85,89.52) .. controls (291.83,89.52) and (291,88.69) .. (291,87.67) -- cycle ;
\draw    (170.9,37.42) -- (107.15,106.97) ;
\draw  [fill={rgb, 255:red, 0; green, 0; blue, 0 }  ,fill opacity=1 ] (159.92,47.17) .. controls (159.92,46.16) and (160.74,45.33) .. (161.76,45.33) .. controls (162.78,45.33) and (163.61,46.16) .. (163.61,47.17) .. controls (163.61,48.19) and (162.78,49.02) .. (161.76,49.02) .. controls (160.74,49.02) and (159.92,48.19) .. (159.92,47.17) -- cycle ;
\draw  [fill={rgb, 255:red, 0; green, 0; blue, 0 }  ,fill opacity=1 ] (219.42,91.92) .. controls (219.42,90.91) and (220.24,90.08) .. (221.26,90.08) .. controls (222.28,90.08) and (223.11,90.91) .. (223.11,91.92) .. controls (223.11,92.94) and (222.28,93.77) .. (221.26,93.77) .. controls (220.24,93.77) and (219.42,92.94) .. (219.42,91.92) -- cycle ;
\draw  [fill={rgb, 255:red, 0; green, 0; blue, 0 }  ,fill opacity=1 ] (115.17,95.92) .. controls (115.17,94.91) and (115.99,94.08) .. (117.01,94.08) .. controls (118.03,94.08) and (118.86,94.91) .. (118.86,95.92) .. controls (118.86,96.94) and (118.03,97.77) .. (117.01,97.77) .. controls (115.99,97.77) and (115.17,96.94) .. (115.17,95.92) -- cycle ;
\draw    (152.9,40.42) -- (229.15,97.72) ;
\draw  [fill={rgb, 255:red, 0; green, 0; blue, 0 }  ,fill opacity=1 ] (319.5,56.42) .. controls (319.5,55.41) and (320.33,54.58) .. (321.35,54.58) .. controls (322.37,54.58) and (323.19,55.41) .. (323.19,56.42) .. controls (323.19,57.44) and (322.37,58.27) .. (321.35,58.27) .. controls (320.33,58.27) and (319.5,57.44) .. (319.5,56.42) -- cycle ;
\draw  [fill={rgb, 255:red, 0; green, 0; blue, 0 }  ,fill opacity=1 ] (299.75,78.42) .. controls (299.75,77.41) and (300.58,76.58) .. (301.6,76.58) .. controls (302.62,76.58) and (303.44,77.41) .. (303.44,78.42) .. controls (303.44,79.44) and (302.62,80.27) .. (301.6,80.27) .. controls (300.58,80.27) and (299.75,79.44) .. (299.75,78.42) -- cycle ;
\draw  [fill={rgb, 255:red, 0; green, 0; blue, 0 }  ,fill opacity=1 ] (330.5,44.92) .. controls (330.5,43.91) and (331.33,43.08) .. (332.35,43.08) .. controls (333.37,43.08) and (334.19,43.91) .. (334.19,44.92) .. controls (334.19,45.94) and (333.37,46.77) .. (332.35,46.77) .. controls (331.33,46.77) and (330.5,45.94) .. (330.5,44.92) -- cycle ;
\draw  [fill={rgb, 255:red, 0; green, 0; blue, 0 }  ,fill opacity=1 ] (273.75,50.92) .. controls (273.75,49.91) and (274.58,49.08) .. (275.6,49.08) .. controls (276.62,49.08) and (277.44,49.91) .. (277.44,50.92) .. controls (277.44,51.94) and (276.62,52.77) .. (275.6,52.77) .. controls (274.58,52.77) and (273.75,51.94) .. (273.75,50.92) -- cycle ;
\draw  [fill={rgb, 255:red, 0; green, 0; blue, 0 }  ,fill opacity=1 ] (284.25,57.67) .. controls (284.25,56.66) and (285.08,55.83) .. (286.1,55.83) .. controls (287.12,55.83) and (287.94,56.66) .. (287.94,57.67) .. controls (287.94,58.69) and (287.12,59.52) .. (286.1,59.52) .. controls (285.08,59.52) and (284.25,58.69) .. (284.25,57.67) -- cycle ;
\draw  [fill={rgb, 255:red, 0; green, 0; blue, 0 }  ,fill opacity=1 ] (149.25,58.42) .. controls (149.25,57.41) and (150.08,56.58) .. (151.1,56.58) .. controls (152.12,56.58) and (152.94,57.41) .. (152.94,58.42) .. controls (152.94,59.44) and (152.12,60.27) .. (151.1,60.27) .. controls (150.08,60.27) and (149.25,59.44) .. (149.25,58.42) -- cycle ;
\draw  [fill={rgb, 255:red, 0; green, 0; blue, 0 }  ,fill opacity=1 ] (170,54.92) .. controls (170,53.91) and (170.83,53.08) .. (171.85,53.08) .. controls (172.87,53.08) and (173.69,53.91) .. (173.69,54.92) .. controls (173.69,55.94) and (172.87,56.77) .. (171.85,56.77) .. controls (170.83,56.77) and (170,55.94) .. (170,54.92) -- cycle ;
\draw  [fill={rgb, 255:red, 0; green, 0; blue, 0 }  ,fill opacity=1 ] (177,59.67) .. controls (177,58.66) and (177.83,57.83) .. (178.85,57.83) .. controls (179.87,57.83) and (180.69,58.66) .. (180.69,59.67) .. controls (180.69,60.69) and (179.87,61.52) .. (178.85,61.52) .. controls (177.83,61.52) and (177,60.69) .. (177,59.67) -- cycle ;
\draw  [fill={rgb, 255:red, 0; green, 0; blue, 0 }  ,fill opacity=1 ] (189.18,69.07) .. controls (189.18,68.05) and (190.01,67.22) .. (191.03,67.22) .. controls (192.04,67.22) and (192.87,68.05) .. (192.87,69.07) .. controls (192.87,70.09) and (192.04,70.91) .. (191.03,70.91) .. controls (190.01,70.91) and (189.18,70.09) .. (189.18,69.07) -- cycle ;
\draw  [fill={rgb, 255:red, 0; green, 0; blue, 0 }  ,fill opacity=1 ] (140.25,68.17) .. controls (140.25,67.16) and (141.08,66.33) .. (142.1,66.33) .. controls (143.12,66.33) and (143.94,67.16) .. (143.94,68.17) .. controls (143.94,69.19) and (143.12,70.02) .. (142.1,70.02) .. controls (141.08,70.02) and (140.25,69.19) .. (140.25,68.17) -- cycle ;
\draw  [fill={rgb, 255:red, 0; green, 0; blue, 0 }  ,fill opacity=1 ] (135.5,73.92) .. controls (135.5,72.91) and (136.33,72.08) .. (137.35,72.08) .. controls (138.37,72.08) and (139.19,72.91) .. (139.19,73.92) .. controls (139.19,74.94) and (138.37,75.77) .. (137.35,75.77) .. controls (136.33,75.77) and (135.5,74.94) .. (135.5,73.92) -- cycle ;

\draw (478.45,80.09) node [anchor=north west][inner sep=0.75pt]    {$R$};
\draw (469.33,26.68) node [anchor=north west][inner sep=0.75pt]    {$\mathbb{P}^{n}$};
\draw (620.2,85.18) node [anchor=north west][inner sep=0.75pt]    {$L$};
\draw (559.95,22.88) node [anchor=north west][inner sep=0.75pt]    {$S$};
\draw (222.67,26.68) node [anchor=north west][inner sep=0.75pt]    {$\mathbb{P}^{n}$};
\draw (427.33,104.52) node [anchor=north west][inner sep=0.75pt]    {$\mathbb{P}^{n}$};
\draw (268.28,78.03) node [anchor=north west][inner sep=0.75pt]    {$R$};
\draw (92.45,86.28) node [anchor=north west][inner sep=0.75pt]    {$R$};
\draw (234.2,91.37) node [anchor=north west][inner sep=0.75pt]    {$L$};
\draw (339.6,40.08) node [anchor=north west][inner sep=0.75pt]   [align=left] {{\small at least $\displaystyle \lceil\frac{d+1}{2}\rceil$ }\\{\small points on a line}};
\draw (22,36.41) node [anchor=north west][inner sep=0.75pt]   [align=left] { {\small $\displaystyle \frac{d+1}{2}$ points of $\displaystyle A$ }\\{\small on each line}};

\end{tikzpicture}

    \caption{Possible configurations of $ A\in\Ss_r(\PP^n)$ such that $\nu_d(A)\in \Bb_r(V^d_n)$ with $ r\leq d$ from \Cref{theorem: Bb per veronesi}.}
    \label{fig:placeholder}
\end{figure}

Now that we characterized emptiness for $\Bb_r(V^d_n)$, we want to do the same for $\Ee_r(V^d_n)$ and $\tilde{\Ee}_r(V^d_n)$, which are our main geometrical interest.  Before proceeding, we make a useful example. 

\begin{example}\label{ex: base induzione d=3 Ee vuoto} 
Let $d=r=3$ and let $A\subset \PP^n$ be a set of three general points. We want to show that $ \nu_3(A)\notin \Ee_3(V^3_n)$. 
By \Cref{theorem: Bb per veronesi} we know that   $B(\nu_3(A))=\nu_3(L\cup R\cup T)\setminus \nu_3(A)$ where $L,R,T\subset \PP^n$ are three distinct lines, each containing two of the three points of $A$ and such that any two of the lines intersect in a unique point of $A$. Hence $\nu_3(L\cup R\cup T)$ is singular only at $\nu_3(A)$, so $\nu_3(A)\notin \Ee_3(V^3_n)$. 
\end{example}

Next we exhibit a configuration of points that actually give $\tilde{\Ee}_r(V^d_n)\neq \emptyset$ in the particular case $d=r$.
\begin{example}\label{ex: buono retta doppia con d punti è tangential contact locus}
Fix $n,d\geq 2$. Let $L\subset \PP^n$ be a line and let $A\subset L$ with $\#A=d$. We want to show that for any $p\in L\setminus A$ we have that $\langle 2A\cup 2p \rangle=\langle 2A\rangle$, hence giving $\nu_d(A)\in \Ee_d(V^d_n)$. For this, notice that it is enough to prove that for any length 2 zero-dimensional scheme $v$ supported at $p$ we have $\langle 2A \rangle=\langle 2A\cup v\rangle$, or equivalently, that $v$ is in the base locus of $\Ii_{2A}(d)$. Since $\#A=d$ and $\deg(2A\cap L)=2d\geq d+1$, \Cref{example: se ci sono abbastanza punti su una retta questa è nel base locus} gives that $L$ is contained in the base locus of $\Ii_{2A}(d)$. So if $v\subset  L$ then there is nothing to prove. Assume now that $v \not\subset L$ so that $ \langle v\cup L \rangle=M\cong \PP^2$. Since $v\subset M$, we can restrict to work on the plane $M$. By contradiction we assume that $\langle 2A \rangle\neq\langle 2A\cup v\rangle$, hence we have that $h^0(\Ii_{(2A\cup v)\cap M,\PP^n}(d))<h^0(\Ii_{(2A)\cap M,\PP^n}(d)) $. 
Restricting to the plane, we have that $\deg((2A\cup v)\cap L)=2d+1$ which implies that $2L$ is in the base locus of $\Ii_{(2A\cup v)\cap M,M}(d)$, hence $H^{0}(\Ii_{(2A\cup v)\cap M,M }(d))=H^{0}(\Ii_{(2A)\cap M,M }(d))$ which contradicts the assumption just made since $M=\PP^2$ is arithmetically Cohen-Macaulay (so $H^{0}(\Ii_{(2A\cup v)\cap M,M }(d))=H^{0}(\Ii_{(2A\cup v)\cap M,\PP^n }(d))$ and $
H^{0}(\Ii_{(2A)\cap M,M }(d))=
H^{0}(\Ii_{(2A)\cap M,\PP^n }(d))$).
\end{example}

We are ready to characterize emptiness of $\Ee_r(V^d_n)$ and $\tilde{\Ee}_r(V^d_n)$.

\begin{theorem}\label{theorem: Ee e tilde Ee per veronesi}
Fix integers $n\ge 2$ and $d\ge 2$. Then
$$
\Ee_r(V^d_n) \text{ and } \tilde{\Ee}_r(V^d_n) \text{ are nonempty} \text{ if and only if } r\geq d. 
$$
\end{theorem}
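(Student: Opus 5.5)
The nonemptiness direction comes from \Cref{ex: buono retta doppia con d punti è tangential contact locus}; the emptiness direction for $r<d$ comes from a critical-scheme argument applied to a tangent vector instead of a point.

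\textbf{Nonemptiness for $r\ge d$.} Fix a line $L\subset\PP^n$ and $A\subset L$ with $\#A=d$.
\begin{itemize}
\item By \Cref{ex: buono retta doppia con d punti è tangential contact locus}, $\langle 2\nu_d(A)\cup 2\nu_d(p)\rangle=\langle 2\nu_d(A)\rangle$ for every $p\in L\setminus A$.
\item The hypothesis $\langle 2\nu_d(A)\rangle\ne\PP^N$ also holds. Since $n\ge2$ there is a linear form $\ell\ne0$ vanishing on $L$, and $\ell^2 g$ with $\deg g=d-2$ is a nonzero element of $H^0(\Ii_{2A}(d))$.
\item Applying \Cref{remark: reformulation of tilde eps with contact} with $T=L$ gives $\nu_d(A)\in\tilde{\Ee}_d(V^d_n)\subseteq\Ee_d(V^d_n)$.
\item For $r>d$, take $F\subset L\setminus A$ with $\#F=r-d$, so $\nu_d(F)\subset E(\nu_d(A))$. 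By \Cref{remark: una volta in Ee se aggiungo punti resto in Ee}, $\nu_d(A\cup F)\in\tilde{\Ee}_r(V^d_n)$. The span condition is preserved because $\ell^2g$ still vanishes doubly on $A\cup F\subset L$.
\end{itemize}

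\textbf{Emptiness for $r<d$.} Since $\tilde{\Ee}_r\subseteq\Ee_r$, it suffices to show $\Ee_r(V^d_n)=\emptyset$. Suppose $\nu_d(A)\in\Ee_r(V^d_n)$ with $r<d$, and let $\nu_d(q)\in E(\nu_d(A))$.
\begin{itemize}
\item Since $\Ee_r\subseteq\Bb_r$, \Cref{theorem: Bb per veronesi} (case $r<d$) gives $2r\ge d+1$. It also gives a line $L$ with $k:=\#(A\cap L)\ge\lceil (d+1)/2\rceil$ and $q\in L$.
\item Choose a degree-$2$ scheme $v$ supported at $q$ with $v\not\subset L$. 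Because $T_{\nu_d(q)}V^d_n\subseteq\langle 2\nu_d(A)\rangle$, we have $\nu_d(v)\subset\langle 2\nu_d(A)\rangle$, i.e. $h^0(\Ii_{2A\cup v}(d))=h^0(\Ii_{2A}(d))$.
\end{itemize}

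Next I build a critical-type scheme for the pair $(A,v)$, mimicking the proof of \Cref{lemma: existence critical scheme characterization} with $v$ in place of $q$.
\begin{itemize}
\item Take a subscheme $W\subseteq 2A$, with all components of degree $\le2$, that is minimal such that $\langle \nu_d(v)\rangle\cap\langle\nu_d(W)\rangle$ is nonempty and the conclusion below holds. By the same line-by-line reduction inside each $\langle 2p\rangle$, $W$ can be chosen so that $Z:=W\cup v$ satisfies $h^1(\Ii_Z(d))>0$ and $h^1(\Ii_{Z'}(d))=0$ for every $Z'\subsetneq Z$.
\item This is the step I expect to be the main obstacle. Now $v$ has degree $2$, so $\langle \nu_d(v)\rangle$ is a line and only a point of it might lie in the span. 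One must allow a degree-$1$ piece of $v$ and then argue that the minimality still forces $v$ (or at least $q$ together with the direction off $L$) to be in $Z$. If only $q$ survives, $Z$ lies on $L$ and no contradiction arises; so I would first try to choose the line through $q$ in $\langle 2\nu_d(q)\rangle$ that is not in $\langle 2\nu_d(A)\cap \nu_d(L)\rangle$-directions. Alternatively, replace $v$ by the whole $2q$ and extract a minimal $Z\subseteq 2A\cup 2q$ not contained in $2A\cup(2q\cap L)$.
\end{itemize}

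\textbf{The counting contradiction.} Once $Z$ is obtained, the argument runs as follows.
\begin{itemize}
\item We have $\deg Z\le 2r+2\le 2d$.
\item \Cref{lemma: famoso} gives a line $L'$ with $\deg(Z\cap L')\ge d+2$.
\item As in \Cref{lemma: dato A in Bb esiste retta contenente un po di pt}, the vanishing of $h^1$ on proper subschemes forces $Z\subset L'$. Hence $v\subset L'$, so $L'=\langle v\rangle\ne L$.
\item $L'$ then contains at least $\lceil d/2\rceil$ points of $A$, since $\deg(Z\cap L')\ge d+2$ and $v$ contributes $2$.
\item $L$ and $L'$ share at most one point, and $q\notin A$. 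Therefore
\[
r\ \ge\ k+\lceil d/2\rceil-1\ \ge\ \lceil (d+1)/2\rceil+\lceil d/2\rceil-1\ =\ d,
\]
which contradicts $r<d$.
\end{itemize}
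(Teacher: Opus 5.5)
Your ``if'' direction is fine and is the paper's argument: the collinear configuration of $d$ points, followed by \Cref{remark: una volta in Ee se aggiungo punti resto in Ee}.

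The ``only if'' direction has a genuine gap, exactly at the step you flag. Nothing in the proposal produces a scheme $Z=W\cup v$ with $W\subseteq 2A$, $v\subseteq Z$, $h^1(\Ii_Z(d))>0$, and $h^1(\Ii_{Z'}(d))=0$ for all $Z'\subsetneq Z$. Your choice of $W$ is circular (``minimal such that \dots\ and the conclusion below holds''). The reduction in \Cref{lemma: existence critical scheme characterization} only controls one chosen point of $\ell_v:=\langle\nu_d(v)\rangle$. Suppose that point is $\nu_d(q)$; note $\nu_d(q)$ always lies in $\langle 2\nu_d(A\cap L)\rangle$, since $\nu_d(L)$ does. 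Then what you extract is a critical scheme for $(\nu_d(A),\nu_d(q))$. It does not contain $v$, it lies on $L$ (proof of \Cref{lemma: dato A in Bb esiste retta contenente un po di pt}), and \Cref{lemma: famoso} just returns $L$. Your count rests entirely on $v\subseteq Z$, since that is what forces $L'=\langle v\rangle\ne L$. So emptiness for $r<d$ is not proved as written.

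The gap can be closed as follows.
\begin{enumerate}
\item Choose $W\subseteq 2A$, with components of degree $\le 2$, of minimal degree such that $\langle\nu_d(W)\rangle$ meets $\ell_v$ in a point different from $\nu_d(q)$. Such $W$ exist: $\ell_v\subseteq\langle 2\nu_d(A)\rangle$, and every vector in the cone over $\langle 2\nu_d(p)\rangle$ lies in the span of $\nu_d$ of a subscheme of $2p$ of degree $\le 2$.
\item By minimality and \Cref{remark: algebra lineare per schemini di grado at most 2}, $\nu_d(W)$ is linearly independent.
\item Suppose $\nu_d(q)\in\langle\nu_d(W)\rangle$. Then $\ell_v\subseteq\langle\nu_d(W)\rangle$. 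Let $W_b\subsetneq W$ be obtained by deleting a reduced component, or the tangent direction of a degree-$2$ component. Each such $W_b$ spans a hyperplane of $\langle\nu_d(W)\rangle$ meeting $\ell_v$, and by minimality it meets it only at $\nu_d(q)$.
\item Intersect these hyperplanes in a basis adapted to the components. This puts $\nu_d(q)$ in $\langle\nu_d(S)\rangle$, where $S\subseteq A$ is the support of the degree-$2$ components. That is impossible, because at most $r+1\le d$ distinct points have linearly independent images under $\nu_d$.
\item Hence $\nu_d(q)\notin\langle\nu_d(W)\rangle$, and $Z:=W\cup v$ has the required properties. It is dependent because $\langle\nu_d(W)\rangle$ meets $\ell_v$. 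Its proper subschemes are $W'$ and $W'\cup\{q\}$ with $W'\subseteq W$, independent because $\nu_d(q)\notin\langle\nu_d(W)\rangle$. The remaining ones are $W'\cup v$ with $W'\subsetneq W$, independent because $\langle\nu_d(W')\rangle\cap\ell_v=\emptyset$ by minimality.
\end{enumerate}
Your count then goes through. In fact it yields $r\ge d+1$, since $L\cap L'=\{q\}$ and $q\notin A$.

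Repaired this way, your route differs from the paper's. The paper splits into two cases. For $A\subset L$ it writes down an explicit degree-$d$ hypersurface $K\cup M\cup G$ singular along $A$ and smooth at $q$. For $A\not\subset L$ it uses the residual sequence of $2H$ and the vanishing $h^1(\Ii_{2(A\setminus L)}(d-2))=0$ from \cite{laface2024ample} to reduce to the aligned case. Your argument is uniform and needs only \Cref{lemma: famoso} and linear algebra.
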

\begin{proof}
 For the "if" part we observe that \Cref{ex: buono retta doppia con d punti è tangential contact locus} gives a configuration of $r=d$ points in $\tilde{\Ee}_d(V^d_n)\subseteq \Ee_d(V^d_n)$. Hence, by \Cref{remark: una volta in Ee se aggiungo punti resto in Ee} we have that $ \Ee_r(V^d_n)\supseteq \tilde{\Ee}_r(V^d_n)\neq \emptyset$  for all $r\geq d$. So let us focus on the "only if" part of the statement.

 By \Cref{theorem: Bb per veronesi}, if $2r< d+1$ then $\Bb_r(V_n^d)$ is empty, hence the emptiness of $ \Ee_r(V^d_n)$ should only be proved in the range $\lceil (d+1)/2 \rceil\leq r<d$, which implies that $d\geq 3$. Assume that the statement is false and so that there exists $A\subset \PP^n$ with $\#A=r<d$ such that $\nu_d(A)\in \Ee_r(V_n^d)$ and let $\nu_d(q)\in E(\nu_d(A))$. By \Cref{lemma: dato A in Bb esiste retta contenente un po di pt}  there is a line $L\subset \PP^n$ such that $q\in L\setminus (L\cap A)$  and $e:= \#(A\cap L)\ge \lceil(d+1)/2\rceil$. 
 
 Assume first $A\subset L$. If $r\leq d-2$ let $G\subset \PP^n$ be a hypersurface of degree $d-1-r$ not passing through $A\cup \{ q\}$, and if $r=d-1$ then we set $G=\emptyset$. Let $M$ be a general hyperplane containing $L$, so in particular if $n=2$ then $M=L$. Let $K$ be the union of $r$ general hyperplanes, each of them containing a different point of $A$. On the one hand we have $A\subset \mathrm{Sing}(K\cup M\cup G)$ because $A\subset K,M$. On the other hand, since $q\notin K$ we have $q\notin  \mathrm{Sing}(K\cup M\cup G)$. But this contradicts the assumption $\nu_d(q)\in E(\nu_d(A))$, or equivalently, we found that $H^0(\Ii_{2A\cup 2q}(d))\neq H^0(\Ii_{2A}(d))$, which is impossible. 
 
Hence $A\not\subset L$. Since $A\not\subset L$ and $\lceil (d+1)/2 \rceil\leq e <r$ this in particular implies that $d\geq 6$. Let $H$ be a general hyperplane containing $L$, so in particular $H=L$ if $n=2$. Consider the exact sequence of $2H$ applied to $2(A\cup p)$:
\begin{equation*}
0 \to \Ii_{2(A\setminus A\cap L)}(d-2)\to \Ii_{2(A\cup p)}(d)\to \Ii_{2p\cup 2(A\cap L),2H}(d)\to 0,
\end{equation*}
where the residual is given by the double points not supported in $L$.
 We have $\#(A\setminus A\cap L) <d-e\le \lfloor (d-1)/2\rfloor$ so $h^1(\Ii_{2(A\setminus A\cap L)}(d-2))=0$ (see for instance \cite[Theorem 1.2]{laface2024ample}).
 Hence $h^1(\Ii_{2(A\cup p)}(d))=h^1(\Ii_{2p\cup 2(A\cap L)}(d))$ and from the previous case in which we considered aligned points, it would be impossible to have $h^1(\Ii_{2(A\cup p)}(d))=n+1+h^1(\Ii_{2(A\cap L)}(d))$ since $e<d$.
\end{proof}

When $r=d$ we can go a step further and precisely characterize both $\Ee_d(V^d_n)$ and $\tilde{\Ee}_d(V^d_n)$.

\begin{theorem}\label{theorem: r=d veronesi Ee}
Let $d\geq 2,n\geq 2$ and let $A\subset \PP^n$ with $\#A=d$. Then 
$$
\tilde{\Ee}_d(V^d_n)=\Ee_d(V^d_n)=\{\nu_d(A)\,|\, A\in \Ss_d(\PP^n) \text{ is contained in a line} \}.
$$
Moreover, the contact locus of any $\nu_d(A)\in \Ee_d(V^d_n)$ is a degree-$d$ rational normal curve $\nu_d(L)\subset V^d_n$ where $L\subset \PP^n$ is a line containing $A$.
\end{theorem}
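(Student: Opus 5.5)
We will prove the two inclusions
\[
\{\nu_d(A)\mid A \text{ collinear}\}\subseteq \tilde{\Ee}_d(V^d_n)\subseteq \Ee_d(V^d_n)\subseteq \{\nu_d(A)\mid A \text{ collinear}\},
\]
and along the way identify the contact locus.

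\emph{First inclusion.} Let $A\subset L$ with $\#A=d$. By \Cref{ex: buono retta doppia con d punti è tangential contact locus}, for every $p\in L\setminus A$ we have $\langle 2A\cup 2p\rangle=\langle 2A\rangle$. We also need $\langle 2\nu_d(A)\rangle\neq\PP^N$; for this, take a hyperplane through $L$ together with $d-1$ further hyperplanes through the points of $A$. So the closed set $T=\nu_d(L)$ satisfies condition iii) of \Cref{remark: reformulation of tilde eps with contact}. Hence $\nu_d(A)\in\tilde{\Ee}_d(V^d_n)$ and $\nu_d(L)\subseteq C(\nu_d(A))$. The middle inclusion holds by definition.

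\emph{Last inclusion.} Let $\nu_d(A)\in\Ee_d(V^d_n)$ and $\nu_d(q)\in E(\nu_d(A))$. Since $r=d$, \Cref{lemma: dato A in Bb esiste retta contenente un po di pt} gives a line $L\ni q$ with $e:=\#(A\cap L)\ge\lceil (d+1)/2\rceil$. Suppose $A\not\subset L$. We mimic the last step of the proof of \Cref{theorem: Ee e tilde Ee per veronesi}. Take a general hyperplane $H\supset L$ and use the residual sequence with respect to $2H$:
\[
0\to \Ii_{2(A\setminus L)}(d-2)\to \Ii_{2(A\cup q)}(d)\to \Ii_{2q\cup 2(A\cap L),2H}(d)\to 0 .
\]
Here $\#(A\setminus L)=d-e\le \lfloor (d-1)/2\rfloor$, so two double points in general enough position impose independent conditions in degree $d-2$ by Alexander--Hirschowitz type results. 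One has to check that $d-e$ double points are independent in degree $d-2$ for arbitrary positions; this holds as long as no line contains $\ge \lceil d/2\rceil$ of them, which is automatic since $2(d-e)\le d-1$. Using \Cref{lemma: famoso} in degree $d-2$ with $\deg\le 2d-2e\le 2(d-2)+1$, we get $h^1(\Ii_{2(A\setminus L)}(d-2))=0$.

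Therefore the jump of $h^1$ coming from adding $2q$ is computed on $2H$, that is, by the collinear configuration $A\cap L$ with $e<d$ points. The aligned-case argument in the proof of \Cref{theorem: Ee e tilde Ee per veronesi} builds a degree-$d$ hypersurface $K\cup M\cup G$ singular at $A\cap L$ but not at $q$. This uses $e\le d-1$, with $G$ of degree $d-1-e$. The hypersurface can be multiplied or adjusted so that it also vanishes doubly on $A\setminus L$ while staying within degree $d$: replace $K$ by hyperplanes through each point of $A$ avoiding $q$. Counting degrees, $1+d$ hyperplanes exceeds $d$, so instead use $M\supset L$ and the $d-e$ points off $L$, each taken twice by a hyperplane through it, and then adjust. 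This contradicts $\nu_d(q)\in E(\nu_d(A))$, so $A\subset L$.

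\emph{Contact locus.} Once $A\subset L$, we show $C(\nu_d(A))=\nu_d(L)$. One inclusion was proved above. Conversely, if $q\notin L$, then $M\cup K$ is a degree-$d$ hypersurface singular at $A$ and not at $q$, where $M$ is a hyperplane containing $L$ but not $q$, and $K$ is a union of $d-1$ hyperplanes through $d-1$ of the points avoiding $q$, with one point counted on both $M$ and a hyperplane. Hence $q\notin C(\nu_d(A))$. Also $\nu_d(L)$ is the degree-$d$ rational normal curve.

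The main obstacle is the $A\not\subset L$ case: we must make the residual vanishing rigorous for arbitrary, non-general points, and produce the separating hypersurface within degree exactly $d$. We would try first to handle this entirely through \Cref{lemma: famoso} and the critical scheme machinery rather than the hypersurface construction.
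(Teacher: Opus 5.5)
Your skeleton is the paper's: collinear sets are handled by \Cref{ex: buono retta doppia con d punti è tangential contact locus} together with \Cref{remark: reformulation of tilde eps with contact}, and the converse uses \Cref{lemma: dato A in Bb esiste retta contenente un po di pt} and the residual sequence of $2H$ with $H\supset L$ general. However, the converse is not closed as written, for two reasons.

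\textbf{The residual vanishing.} You cannot get $h^1(\Ii_{2(A\setminus L)}(d-2))=0$ by applying \Cref{lemma: famoso} to $2(A\setminus L)$. That scheme has degree $(n+1)(d-e)$, not $2(d-e)$, and this generally exceeds $2(d-2)+1$. An extra step is needed. One option is Chandler's curvilinear lemma: a failing curvilinear subscheme of $2(A\setminus L)$ has degree $\le 2(d-e)\le d-1$, and any scheme of degree $\le d-1$ imposes independent conditions in degree $d-2$. Another is the elementary fact that $s$ arbitrary double points impose independent conditions on forms of degree $\ge 2s-1$ (use products of hyperplanes). This applies since $2(d-e)-1\le d-2$, because $2e\ge d+1$. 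The paper simply cites \cite[Theorem 1.2]{laface2024ample}.

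\textbf{The final step.} Once the vanishing holds, you are already done; no hypersurface singular along all of $A$ is needed. Since $h^1$ and $h^2$ of $\Oo_{\PP^n}(d-2)$ vanish and $2H$ is a hypersurface, the residual sequence gives $h^1(\Ii_{2(A\cup q)}(d))=h^1(\Ii_{2(A\cap L)\cup 2q}(d))$. Also $h^1(\Ii_{2A}(d))\ge h^1(\Ii_{2(A\cap L)}(d))$. So a jump of $n+1$ for $A$ forces a jump of $n+1$ for the collinear set $A\cap L$ of $e\le d-1$ points. The aligned hypersurface $K\cup M\cup G$ excludes this, and that is exactly how the paper concludes.

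Your subsequent ``multiply or adjust'' paragraph is not a valid construction. $M$, plus one hyperplane per point of $A\cap L$ and two per point of $A\setminus L$, has degree $2d-e+1>d$, and ``then adjust'' hides the missing idea.

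If you want a direct hypersurface (this would bypass the residual sequence and the citation, as in the paper's proof of \Cref{theorem: Ee e tilde Ee per segre-veronesi}), make each hyperplane through $b\in A\setminus L$ also pass through a point $a\in A\cap L$.
\begin{itemize}
\item Such a hyperplane can avoid $q$, because the line $\langle a,b\rangle$ meets $L$ only at $a\neq q$.
\item The two hyperplanes chosen at $b$, through distinct $a,a'$, are distinct; otherwise they would contain $L\ni q$.
\end{itemize}
Then take $M$, together with $\max\{e,2(d-e)\}\le d-1$ such hyperplanes (adding a hyperplane through any point of $A\cap L$ not yet used, avoiding $q$). Complete with a hypersurface of the complementary degree missing $q$. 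The result is singular along $A$ and smooth at $q$.

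\textbf{Minor point.} In your first inclusion and in the contact-locus paragraph, $d-1$ hyperplanes each through a single point of $L$ cannot make all $d$ points of $A$ singular. One of them must contain $L$ (for instance use $2M$), which is allowed there since $q\notin L$.
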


\begin{proof}
By \Cref{ex: buono retta doppia con d punti è tangential contact locus}, to conclude the proof we only have to show that any set $A\subset \PP^n$ that is not aligned cannot give a configuration for $\Ee_d(V^d_n)$. We recall that since $\Ee_d(V^d_n)\subset \Bb_d(V^d_n)$, if we fix $A\subset \PP^n$ with $\nu_d(A)\in \Ee_d(V_d^n)$ then by \Cref{lemma: dato A in Bb esiste retta contenente un po di pt} there is a line $L\subset \PP^n$ such that $q\in L\setminus (L\cap A)$  and $e:= \#(A\cap L)\ge \lceil(d+1)/2\rceil$. The case $d=3$ is settled in \Cref{ex: base induzione d=3 Ee vuoto}, so we can assume $d\geq 4$.
Let $H$ be a general hyperplane containing $L$, so in particular $H=L$ if $n=2$. Consider the exact sequence of $2H$ applied to $2(A\cup p)$:
\begin{equation*}
0 \to \Ii_{2(A\setminus A\cap L)}(d-2)\to \Ii_{2(A\cup p)}(d)\to \Ii_{2p\cup 2(A\cap L),2H}(d)\to 0.
\end{equation*}
 We have $\#(A\setminus A\cap L) =d-e\le \lfloor (d-1)/2\rfloor$ so $h^1(\Ii_{2(A\setminus A\cap L)}(d-2))=0$ (see for instance \cite[Theorem 1.2]{laface2024ample}). Hence $h^1(\Ii_{2(A\cup p)}(d))=h^1(\Ii_{2p\cup 2(A\cap L)}(d))$ and the latter cannot be equal to $n+1+h^1(\Ii_{2(A\cap L)}(d))$ since $e<d$.
\end{proof}

We conclude by revisiting \Cref{connection terracini} to discuss easy examples of interaction between $\Bb_r(V^d_n),\Ee_r(V^d_n)$ and $\TT_r(V^d_n)$. Combining \Cref{theorem: Bb per veronesi}, \Cref{theorem: Ee e tilde Ee per veronesi} and \cite[Theorem 6.6]{GSTT}, for all $n\geq 2$ we have
$$\Bb_3(V^3_n)=\Ss_3(V^3_n) \supset \tilde{\Ee}_3(V^3_n)=\Ee_3(V^3_n)=\TT_3(V^3_n)= \{ \nu_3(A) \,|\, A\in \Ss_3(\PP^n) \text{ is contained in a line} \}.$$ Moreover, for all $n\ge2,d\ge4$ we have
$$\tilde\Ee_d(V^d_n)=\Ee_d(V^d_n)=\{ \nu_d(A) \, |\, A\in \Ss_d(\PP^n) \text{ is contained in a line }\}\subsetneq \TT_d(V^n_d),$$
and by \Cref{remark: una volta in Ee se aggiungo punti resto in Ee} for all $r\geq d+1$ there is always a family of $\Ee_r(V^n_d)$ given by at least $d$ collinear points contained in the corresponding Terracini locus. 

Lastly, we remark that by \cite[Theorem 1.2]{COV17} the cases $(n,d,r)\in \{ (2,6,9),(3,4,8),(5,3,9)\} $ correspond to (the only) tangentially weakly defective Veronese varieties and for all such $(n,d,r)$ we have that the general $T\in \sigma_r(V^d_n)$ lies in exactly two secant planes given by two general $A,B\in \Ss_r(X)$. Hence the tangent space {at $T$} of the corresponding secant variety {is such that} $T_T(\sigma_r(V^d_n))=\langle 2A\rangle=\langle 2B\rangle$ with dimension $r(n+1)-1<N$. So the general $A,B\in \Ss_r(X)$ are such that both $A,B\in \tilde{\Ee}_r(V^d_n) $ while $A,B\notin \TT_r(V^d_n)$ and in particular $\tilde{\Ee}_r(V^d_n)\supsetneq \TT_r(V^d_n)$.

\section{Segre-Veronese  varieties}\label{section: segre-veronese}
In this section we focus on Segre-Veronese varieties. Fix positive integers $k$, $n_1,\dots,n_k$, $d_1,\dots,d_k$. Set $Y:= \PP^{n_1}\times \cdots \times \PP^{n_k}$, $N=\binom{n_1+d_1}{d_1}\cdots \binom{n_k+d_k}{d_k}-1$ and let $\nu:Y\rightarrow \PP^N$ be the Segre-Veronese embedding of $Y$ via the complete linear system given by $\Oo_Y(d_1,\dots,d_k)$. We will denote by $X=\nu(Y)$ the Segre-Veronese variety embedded with degrees $d_1,\dots,d_k$.

We will further use the following notation.
\begin{notation}\label{notation SV}
Let $k\geq 2$. For each $i\in \{1,\dots ,k\}$ let
\begin{itemize}
    \item $Y_i=\PP^{n_1}\times \cdots \times \PP^{n_{i-1}}\times \PP^{n_{i+1}}\times \cdots \times \PP^{n_k}$ be the space in which we forget the $i$th factor of $Y$;
    \item $\pi_i: Y \to \PP^{n_i}$ be the projection on the $i$-th factor of $Y$;
    \item $\eta_i: Y\rightarrow Y_i$ be the projection on $Y_i$;
    \item $\epsilon _i\in \NN^k$ be $\varepsilon_i=(0,\dots,0,1,0,\dots,0)$  with the 1 in the $i$th position.
\end{itemize}
\end{notation}
For each irreducible curve $C\subset Y$ the multidegree of $C$ is the element $(b_1,\dots,b_k)\in \NN^k$ such that $b_i=\deg(\Oo_C(\epsilon _i))$. In other words, if $n_i=1$ then $b_i=\deg(\pi_{i|C})$, while if $n_i\geq 2$ then $b_i=\deg(\pi_i(C))\cdot\deg(\pi_{i|C})$. Moreover, notice that $C$ has multidegree $(b_1,\dots,b_k)$ with $b_i=0$ if and only if $\pi_i(C)$ is a point. For each reduced curve $C\subset Y$ the multidegree of $C$ is the sum of the multidegrees of the irreducible components of $C$. Notice also that the notion of multidegree of a curve coincide with the usual notion of degree in the case $k=1$.

\begin{remark}\label{remark: se i punti sono distinti quando rimuovo un fattore e avevo h1 positivo resto con h1 pos}
Let $k\geq 2$ and let $Z\subset Y=(\PP^1)^{\times k}$ be a zero-dimensional scheme such that $h^1(\Ii_Z(t,\dots ,t))>0$ and $\pi_{1|Z}$ is an embedding. So in particular $\eta_{k|Z}$ is an embedding and $\deg(\eta_k(Z)) =\deg (Z)$. Recall that $Y_k\cong (\PP^1)^{\times (k-1)}$ in which we omit the last factor of $Y$ and that $\eta_k: Y\to Y_k$ is the projection. Then,
$$h^1(Y_k,\Ii_{\eta_k(Z)}(t,\dots ,t)) >0.$$
Indeed, let $H\in |\Oo_Y(0,\dots,0,t)|$ be the preimage of a finite $S\subset \PP^1$ with $\#S\leq t$ and $S\cap \pi_k(Z)=\emptyset$. The residual exact sequence of $H$ gives $h^1(\Ii_{Z\cap H,H }(t,\dots,t))=0$ since $Z\cap H=\emptyset$, so 
$$h^1(\Ii_{\Res_H(Z)}(t,\dots,t,0))\ge h^1(\Ii_{Z}(t,\dots,t))>0.$$ 
But now $\Res_H(Z)=Z$ and $\deg(\eta_k(Z))=\deg(Z)$. Moreover, $H^0(\Oo_Y(t,\dots,t,0))\cong H^0(\Oo_{Y_k}(t,\dots,t))$, which implies $h^1(\Ii_{\Res_H(Z)}(t,\dots,t,0))=h^1(Y_k,\Ii_{\eta_k(Z)}(t,\dots,t))$.
\end{remark}

We want to start with a technical lemma that generalizes \Cref{lemma: famoso} for all Segre-Veronese varieties. For this purpose, let us first see an example on how the lemma generalizes in the multiprojective space when the corresponding Segre-Veronese has {at least} one of its degrees equal to one and the degree of the zero-dimensional scheme is at most three.

\begin{example}\label{example: famoso lemma nel multiproj with min deg=1}
Let $ Y=\PP^{n_1}\times \cdots \times \PP^{n_k} $ be embedded in multidegree $d_1,\dots,d_k$ where $d_i=1$ for some $i\in \{ 1,\dots,k\}$. Let $Z\subset Y$ be a zero-dimensional scheme with $\deg(Z)\leq 3$ and such that $h^1(\Ii_Z(d_1,\dots,d_k))>0$. First notice that since $h^1(\Ii_Z(d_1,\dots,d_k))>0$ we actually have that $\deg(Z)=3$ and $\langle \nu(Z) \rangle$ is contained in a line, by \Cref{prel01}. Moreover, since $\nu(Y)$ is scheme-theoretically cut out by quadrics, such a line containing $\langle \nu(Z)\rangle $ is completely included in $\nu(Y)$ 
and it is the image of a curve of multidegree $\varepsilon_i$ where $i$ must correspond to an index for which $d_i=1$ by the structure of the rulings of $\nu(Y)$.
\end{example}

The following is a generalization of \Cref{lemma: famoso}.
\begin{lemma}\label{lemma: famoso per SV}
Fix an integer $k\ge 1$  and positive integers $n_i$, $d_i$, $1\le i\le k$. Let $Y= \PP^{n_1}\times \cdots \times \PP^{n_k}$ and $t:= \min\{d_1,\dots,d_k\}$.
Let $Z\subset Y$ be a zero-dimensional scheme such that $\deg(Z)\le 2t+1$ and $h^1(\Ii_Z(d_1,\dots ,d_k)) >0$. Then there is $i\in \{1,\dots ,k\}$ such that
$d_i=t$ and a curve $L\subset Y$ of multidegree $\epsilon_i$ such that $\deg(L\cap Z)\ge t+2$. 
\end{lemma}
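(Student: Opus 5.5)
We argue by induction on $\deg(Z)$ and on the number $k$ of factors; the case $k=1$ is exactly \Cref{lemma: famoso}. The first reduction is to a \emph{critical} $Z$. Replace $Z$ by a minimal subscheme $Z'\subseteq Z$ with $h^1(\Ii_{Z'}(d_1,\dots,d_k))>0$. Then $h^1(\Ii_{Z'}(d_1,\dots,d_k))=1$, and every proper subscheme of $Z'$ imposes independent conditions. It suffices to find the curve for $Z'$, so we assume $Z$ has this minimality property. As a first consequence, $\deg(Z)\ge t+2$: any scheme of degree $\le t+1$ imposes independent conditions on $|\Oo_Y(t,\dots,t)|$, hence on the larger system $|\Oo_Y(d_1,\dots,d_k)|$.

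The main tool is the residual exact sequence with respect to divisors of multidegree $\epsilon_i$. Fix $i$ and choose $H\in|\Oo_Y(\epsilon_i)|$, i.e.\ the preimage of a hyperplane of $\PP^{n_i}$, containing as much of $Z$ as possible. We use
$$0\to \Ii_{\Res_H(Z)}(d_1,\dots,d_i-1,\dots,d_k)\to \Ii_Z(d_1,\dots,d_k)\to \Ii_{Z\cap H,H}(d_1,\dots,d_k)\to 0.$$
Since $H\cong \PP^{n_1}\times\cdots\times\PP^{n_i-1}\times\cdots\times\PP^{n_k}$ is again multiprojective, with the same multidegree, induction on $\sum n_j$ applies to it. At least one of the two outer terms has positive $h^1$. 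We arrange $\deg(Z\cap H)$ large, using $H$ through $n_i$ suitably chosen points of $\pi_i(Z_{\red})$. Then the residual has degree $\le \deg(Z)-n_i\le 2t+1-2=2(t-1)+1$ when $n_i\ge 2$, and the minimum degree of the residual line bundle drops by at most one. Iterating, induction gives a curve of multidegree $\epsilon_j$ meeting $\Res_H(Z)\subseteq Z$ in degree $\ge t'+2$.

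The bookkeeping has to be done carefully. If we lower a $d_i$ with $d_i>t$, the minimum $t$ is unchanged. In that case the residual has degree $\le 2t+1$, and induction gives the curve directly with degree $\ge t+2$. If instead $d_i=t$, the minimum becomes $t-1$, and the residual must have degree $\le 2t-1$; this forces $\deg(Z\cap H)\ge 2$, which is the reason for choosing $H$ containing two points of $Z$ (or a tangent vector). When the $h^1$ lives on $H$, induction on dimension applies to $Z\cap H$. This ultimately reduces everything to the case $Y=(\PP^1)^k$, handled as in \Cref{remark: se i punti sono distinti quando rimuovo un fattore e avevo h1 positivo resto con h1 pos}. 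If some $\pi_{j|Z}$ is an embedding, projecting away a factor $j$ with $d_j>t$ keeps $h^1>0$ and reduces $k$. If instead every projection identifies two points, then $Z$ has two points, or a tangent vector, sharing a coordinate, which gives a good $H$ for the residual step.

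Finally, we upgrade the curve found for a subscheme or residual to the statement for $Z$. Suppose we have a curve $L$ of multidegree $\epsilon_i$ with $d_i=t$ and $\deg(L\cap W)\ge t'+2$, where $W$ is the residual scheme. When $t'=t-1$, we only know $\deg(L\cap W)\ge t+1$. Then we argue as in the proof of \Cref{lemma: dato A in Bb esiste retta contenente un po di pt}: $L\cong\PP^1$ is embedded in degree $t$, so $\nu(L)$ is a rational normal curve of degree $t$. Minimality of $Z$ together with $h^1(\Ii_{Z\cap L})>0$ (once $\deg(Z\cap L)\ge t+2$) forces $Z\subset L$. The cases where $Z\cap L$ has degree exactly $t+1$ must therefore be ruled out by a degree count: the remaining $\le t$ points of $Z$ off $L$ impose independent conditions, via a divisor of multidegree $\epsilon_j$-type containing $L$. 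We expect this step — controlling the drop in $t$ when the reduced index has $d_i=t$, and recovering the full bound $t+2$ — to be the main obstacle. We would first attempt it through the case distinction on whether $H$ can be chosen to contain at least two degrees of $Z$ beyond those on the eventual curve. \Cref{example: famoso lemma nel multiproj with min deg=1} serves as the base case $t=1$.
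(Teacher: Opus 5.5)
Your outline follows the same route as the paper: residual sequences with respect to divisors in $|\Oo_Y(\epsilon_i)|$, induction on $t$ and on $\dim Y$, reduction to $(\PP^1)^{\times k}$ with all projections embeddings, and base case $t=1$. It breaks at the step ``If we lower a $d_i$ with $d_i>t$, the minimum $t$ is unchanged \dots\ induction gives the curve directly''. Induction applied to $\Res_H(Z)$ with degrees $(d_1,\dots,d_i-1,\dots,d_k)$ gives a curve of multidegree $\epsilon_j$, where $j$ realizes the minimum of the \emph{new} degrees. If $d_i=t+1$, this can be $j=i$, and then the original $d_i$ is not $t$.

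This cannot be patched, because the statement as written is false. Take $Y=\PP^1\times\PP^1$ and $(d_1,d_2)=(2,3)$, so $t=2$. Let $Z$ be five distinct points on $F=\{p\}\times\PP^1$, a curve of multidegree $\epsilon_2$. Then $\deg(Z)=5=2t+1$, and $Z$ imposes only $h^0(\Oo_F(3))=4$ conditions, so $h^1(\Ii_Z(2,3))=1$. Yet every curve of multidegree $\epsilon_1$ is some $\PP^1\times\{b\}$ and meets $Z$ in at most one point. The same failure occurs whenever $t<d_j\le 2t-1$ for some $j$.

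The paper's proof has the identical defect. In the unequal-degree case it keeps lowering $d_k$ until all degrees equal $t$, and loses track of the original index. What both arguments can actually deliver is a curve $L$ of multidegree $\epsilon_i$ for \emph{some} $i$; the natural correct form is $\deg(L\cap Z)\ge d_i+2$. That form also keeps the induction on $t$ consistent. The paper applies the inductive hypothesis to degrees $(t,\dots,t,t-1)$, which for $t\ge 3$ is itself in the bad range. With the $d_i+2$ version, a curve in a direction $j\ne k$ already meets $Z$ in degree $\ge t+2$, so that case is harmless.

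Second, even when $d_1=\cdots=d_k=t$, where the claim does hold, you leave the decisive step open. After a residual step with $\deg(Z\cap H)\ge 2$ you only get a curve $R$ of multidegree $\epsilon_i$ with $\deg(R\cap\Res_H(Z))\ge t+1$. Appealing to minimality as in the Veronese lemma is circular here, since that argument presupposes $\deg(Z\cap R)\ge t+2$. The missing idea, which the paper uses, is a second residual sequence. Since $\pi_m(R)$ is a point for $m\ne i$, pick $M\in|\Oo_Y(\epsilon_m)|$ containing $R$. Either $h^1(M,\Ii_{Z\cap M,M}(t,\dots,t))>0$, and induction on $\dim Y$ applies. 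Or $\Res_M(Z)$, of degree $\le 2t+1-(t+1)=t$, would have positive $h^1$ in a system of minimal degree $t-1$, which is impossible.

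Finally, your $(\PP^1)^{\times k}$ step is garbled. If $\pi_{j|Z}$ is an embedding, then $\eta_{m|Z}$ is an embedding for every $m\ne j$, so it is a factor $m\ne j$ that can be forgotten, not factor $j$. A curve $R$ found in $Y_m$ must also be lifted back to $Y$. This needs a further residual argument with respect to a divisor containing $\eta_m^{-1}(R)\cong\PP^1\times\PP^1$, as the paper does in its treatment of that case.
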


Before proving the complete result, let us treat separately the case where $Y=(\PP^1)^{\times k}$ is embedded in multidegree $ d_1=\dots=d_k=t\geq 1$  and the projection of $Z$ in each of the factors is an embedding. 

\begin{example}\label{example: lemma famoso tutti P1 e gradi uguali}
With assumptions as in \Cref{lemma: famoso per SV}, we prove the result for the particular case of
$$
n_1=\dots=n_k=1,\;d_1=\cdots=d_k=t\geq 1,
$$
and $Z\subset Y$ being such that $\pi_{i|Z}$ is an embedding for all $i\in \{1,\dots,k \}$. Notice in particular that $Z$ is curvilinear. 

We work by induction on $k\geq 1$.
The base case $k=1$ is \Cref{lemma: famoso}, so assume $k\geq 2$.  Since $\pi_{1|Z}$ is an embedding, $\eta_{k|Z}: Z\to Y_k$ is an embedding. Hence $\deg(\eta_k(Z)) =\deg (Z)\le 2t+1$. Recall that $Y_k\cong (\PP^1)^{k-1}$ in which we omit the last factor of $Y$ and that $\eta_k: Y\to Y_k$ is the projection.

By \Cref{remark: se i punti sono distinti quando rimuovo un fattore e avevo h1 positivo resto con h1 pos} we have $h^1(Y_k,\Ii_{\eta_k(Z)}(t,\dots ,t)) >0$. By the inductive assumption on $k$ there is an integer $i\in \{1,\dots ,k-1\}$ and a curve $R\subset Y_k$ of multidegree $\epsilon_i$ such that $\deg(R\cap \eta_k(Z))\ge t+2$. Set $M:= \eta_k^{-1}(R)$. We have $M\cong \PP^1\times \PP^1$ and $M$ is the complete intersection of $k-2$ hypersurfaces $H_j\in |\Oo_Y(\epsilon_j)|$, for all $j\in \{1,\dots ,k-1\}\setminus \{i\}$. Fix one of these hypersurfaces $H_j$ and consider the residual sequence of $H_j$. 

Clearly,
$\deg(Z\cap H_j)\le \deg (Z)\le 2t+1$.
If $h^1(H_j,\Ii_{Z\cap H_j,H_j}(t,\dots ,t)) >0$, we conclude by the inductive assumption on $k$. Indeed there is a curve $L$ of multidegree $\epsilon_s$ such that $\deg(L\cap Z)\ge \deg(L\cap (Z\cap H_j))\ge t+2$.

Hence, we are left with the case in which $h^1(H_j,\Ii_{Z\cap H_j,H_j}(t,\dots ,t)) =0$. In this case we have $h^1(\Ii_{\Res_{H_j}(Z)}(t,\dots ,t,t-1,t,\dots,t)) >0$. Now since $H_j\supseteq M$, we have that $\deg(H_j\cap Z)\ge \deg(\eta_k(Z)\cap R)\ge t+2$. 
Hence since by assumption $\deg(Z)\leq 2t+1$, we have $\deg(\Res_{H_j}(Z))\le t-1\le 2(t-1)+1$. 

We work now by induction on $t\geq 1$, where the base case $t=1$ is \Cref{example: famoso lemma nel multiproj with min deg=1}.
 We apply  the inductive hypothesis to $\Res_{H_j}(Z)$ and we have the existence of a curve $S$ such that $\deg(S\cap \Res
 _{H_j}(Z))\ge(t-1)+2=t+1$
  and we get a contradiction.
\end{example}

Now we are ready to prove \Cref{lemma: famoso per SV}.
\begin{proof}[Proof of \Cref{lemma: famoso per SV}] 
We will use induction on the integer $t\geq 1$ where the base case $t=1$ is \Cref{example: famoso lemma nel multiproj with min deg=1}. Moreover, we will use induction on the positive integer $n:= \dim Y=n_1+\cdots +n_k$. The base case $n=1$, and so $k=1$, is covered by \Cref{lemma: famoso}. 

We first prove the result under the convenient assumption that all degrees coincide, namely $$d_1=\cdots =d_k=t,$$ and at the end we will show how we can always reduce to consider all degrees equal. If $Y=(\PP^1)^{\times k}$ for some $k\geq 1$ and each $\pi_{i|Z}:Z\rightarrow \PP^1$ is an embedding, the lemma is true by \Cref{example: lemma famoso tutti P1 e gradi uguali}. In the following, we either prove the result by induction or we show how one can always reduce to \Cref{example: lemma famoso tutti P1 e gradi uguali}.

Let $H\in |\Oo_Y(\varepsilon_i)|$ be such that $\deg( Z\cap H)$ is maximal for some $i\in \{1,\dots,k \}$. Without loss of generality and by permuting the factors of $Y$ if necessary we take $i=k$. Notice that $\deg(Z\cap H)\geq \min \{\deg(Z),n_k \}$ since the dimension of the $k$th factor is $n_k$. Consider the residual sequence for $H$:
$$
0\rightarrow \Ii_{\Res_H(Z)}(t,\dots,t,t-1)\rightarrow \Ii_Z(t,\dots,t)\rightarrow \Ii_{Z\cap H,H}(t,\dots,t) \rightarrow 0.
$$
We look at $\deg(\Res_H(Z))=\deg(Z)-\deg(Z\cap H)$. 
\medskip

First, assume $\deg(\Res_H(Z))\le 2(t-1)+1=2t-1$. By inductive assumption on $t$ there exists a curve $R\subset Y$ of multidegree $\varepsilon_k$ such that $\deg(\Res_H(Z) \cap R)\geq (t-1)+2=t+1$. If $\deg(\Res_H(Z) \cap R)\geq t+2$ then it is sufficient to take $L:=R$ to get the result. Hence we assume $ \deg(\Res_H(Z) \cap R)=t+1$. Since the multidegree of $R$ is $\varepsilon_k$ we have that $\pi_i(R)$ is a single point for all $i<k$, so in particular also $\pi_1(R)$ is a point and $|\Ii_R(\varepsilon_1)|\neq \emptyset$. Take a divisor $M\in |\Ii_R(\varepsilon_1)| $ passing through $R$ and consider the residual sequence for $M$:
    $$
0\rightarrow \Ii_{\Res_M(Z)}(t-1,t,\dots,t)\rightarrow \Ii_Z(t,\dots,t)\rightarrow \Ii_{Z\cap M,M}(t,\dots,t) \rightarrow 0.
$$
If $h^1(M,\Ii_{M\cap Z,M}(t,\dots,t))>0$ then we can conclude by induction on the integer $n $. Hence, we assume $h^1(M,\Ii_{M\cap Z,M}(t,\dots,t))=0$, which implies $h^1(\Ii_{\Res_M(Z)}(t-1,t, \dots,t))>0$. But now we have that $\deg(\Res_M(Z))\leq \deg(Z)-(t+1)\leq t $ and by inductive assumption on $t$ we would have the existence of a line $Q$ of multidegree $\varepsilon_1$ such that $ \deg(\Res_M(Z)\cap Q)\geq (t-1)+2=t+1$, which is impossible.
\smallskip

Otherwise, we have $\deg(\Res_H(Z))\geq  2t$. By assumption we know that $\deg(Z)\leq 2t+1$, hence $\deg(Z\cap H)\leq 1$ and since we chose $H$ such that $\deg(Z\cap H)$ is maximal, this means that actually $\deg(Z)=2t+1$, $\deg(Z\cap H)=1\geq \min \{\deg(Z),n_k \}=n_k$, hence the $k$th factor is a $\PP^1$. \\
By applying the analogous argument above to other divisors of $|\Oo_Y(\varepsilon_i)|$ with $i<k$ taken such that their intersection with $Z$ is maximal, we either conclude by induction or we get that all $n_i=1$, so $Y=(\PP^1)^{\times k}$ and each $\pi_{i|Z}: Z\to \PP^1$ is an embedding. Hence, the lemma is true by \Cref{example: lemma famoso tutti P1 e gradi uguali}.

To conclude the proof, we have to analyze the remaining case when not all $d_i$'s coincide. 
Without loss of generality and by relabeling if necessary, we may assume $t=d_1\leq \cdots \leq d_k$, with $d_k>t$, otherwise there is nothing to prove. We either prove the result by induction or we prove that we can always reduce to the case that all degrees are equal. 

Take $H\in |\Oo_Y(\varepsilon_k)|$ such that $\deg(Z\cap H )$ is maximal, in particular, we have $\deg(Z\cap H)\geq \min\{\deg(Z),n_k \}$. Consider the residual exact sequence of $H$:
$$
0\rightarrow \Ii_{\Res_H(Z)}(d_1,\dots,d_{k-1},d_k-1)\rightarrow \Ii_Z(d_1,\dots,d_k)\rightarrow \Ii_{Z\cap H,H}(d_1,\dots,d_k)\rightarrow 0.
$$
If $h^1(H,\Ii_{Z\cap H,H}(d_1,\dots,d_k))>0$, then it is sufficient to use the inductive assumption on the dimension $n$ of $Y$. Otherwise $h^1(H,\Ii_{Z\cap H,H}(d_1,\dots,d_k))=0$, but then we necessarily have $h^1(\Ii_{\Res_H(Z)}(d_1,\dots ,d_k-1))>0$ and  we look at $\Res_H(Z) $. Since $d_k>t$ we have that $d_k-1\geq t$. In particular, if $d_{k}-1=t$ we reorder the factors of $Y$ such that the corresponding degrees are in an increasing order and the new degree in the $k$th factor is strictly greater than $t$. We start all over the procedure of taking a divisor $H'\in |\Oo_Y(\varepsilon_k)|$ whose intersection with $\Res_H(Z):=Z'$ is maximal and applying the corresponding residual sequence again either we find a contradiction or we have $h^1(\Ii_{\Res_{H'}(Z')}(d_1,\dots,d_k-1))>0$. We keep going like this until either we find a contradiction (possibly because there is nothing more to consider in the corresponding residue) or we get that all degrees coincide and equal $t$. 
\end{proof}

 The following is the analogue of \Cref{lemma: dato A in Bb esiste retta contenente un po di pt} for Segre-Veronese varieties.
\begin{lemma}\label{lemma: dato A in Bb esiste retta contenente un po di pt SV}
Let $t=\min_i\{d_i \}$,   $r\le t$ and let $A\in \Ss_r( Y)$. Assume $\nu(A)\in \Bb_r(X)$ and fix $q\in Y$ such that $\nu(q)\in B(\nu(A))$. Then there is $i\in \{1,\dots ,k\}$ such that $d_i=t$ and a curve $L\subset Y$ of multidegree $\epsilon_i$ such that $ \#(A\cap L)\geq \lceil(t+1)/2\rceil$ and $q\in L$. 
\end{lemma}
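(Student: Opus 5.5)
We mirror the proof of \Cref{lemma: dato A in Bb esiste retta contenente un po di pt}, replacing \Cref{lemma: famoso} by \Cref{lemma: famoso per SV}. Since $\nu(q)\in B(\nu(A))$, \Cref{lemma: existence critical scheme characterization} gives a zero-dimensional scheme $Z\subset Y$ such that $\nu(Z)$ is a critical scheme for the pair $(\nu(A),\nu(q))$. Then $Z_{\red}\subseteq A\cup\{q\}$, $q$ is a simple connected component of $Z$, and every other connected component has degree at most $2$. Hence $\deg(Z)\le 2r+1\le 2t+1$. Because $\nu$ is the embedding by the complete linear system $|\Oo_Y(d_1,\dots,d_k)|$, \Cref{remark: riformulazione proprietà schema critico in cohomologia per casi belli} yields $h^1(\Ii_Z(d_1,\dots,d_k))=1$ and $h^1(\Ii_{Z'}(d_1,\dots,d_k))=0$ for every $Z'\subsetneq Z$.

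Applying \Cref{lemma: famoso per SV} to $Z$, we get an index $i$ with $d_i=t$ and a curve $L\subset Y$ of multidegree $\epsilon_i$ such that $\deg(L\cap Z)\ge t+2$. Such an $L$ is a fiber of $\eta_i$, i.e. a line in the $i$-th factor $\{y\}\times\PP^1\subset\{y\}\times\PP^{n_i}$ (with $y\in Y_i$), and $\nu$ restricted to $L$ is the degree-$t$ Veronese embedding of $\PP^1$. Since each connected component of $Z$ has degree at most $2$ and only one point, $q$, has degree $1$, we get $\deg(L\cap Z)\le 2\#(A\cap L)+1$. Hence $2\#(A\cap L)+1\ge t+2$, which gives $\#(A\cap L)\ge\lceil (t+1)/2\rceil$.

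It remains to show $q\in L$, and we prove more: $Z\subset L$. Since $L\cong\PP^1$ is embedded by $\Oo_L(t)$, which is a complete linear system ($\nu(L)$ is a rational normal curve of degree $t$), and $\deg(Z\cap L)\ge t+2$, the points of $\nu(Z\cap L)$ span at most a $\PP^t$ while having degree at least $t+2$. So $\dim\langle \nu(Z\cap L)\rangle<\deg(Z\cap L)-1$, and by \Cref{prel01} (or directly \Cref{lemma: properties critical scheme}, phrased in terms of spans) $\nu(Z\cap L)$ is linearly dependent. By \Cref{lemma: properties critical scheme}, every proper subscheme of $\nu(Z)$ is linearly independent, so $Z\cap L=Z$. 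In particular $q\in L$.

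The step we expect to be the main obstacle is this final one. We must justify that linear dependence of $\nu(Z\cap L)$ in $\PP^N$ is exactly the dependence computed on $L$, that is, that $\langle\nu(L)\rangle$ is the degree-$t$ rational normal curve span. This holds because $L$ is a fiber of $\eta_i$ and $d_i=t$. Working with spans rather than $h^1$ on $Y$ avoids the Cohen--Macaulay argument used in the Veronese case.
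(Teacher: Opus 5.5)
Your proof is correct. The first half is exactly the paper's: the critical scheme $Z$ with $\deg(Z)\le 2r+1\le 2t+1$, then \Cref{lemma: famoso per SV} gives $L$ with $\deg(L\cap Z)\ge t+2$, hence $\#(A\cap L)\ge\lceil (t+1)/2\rceil$.

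You differ from the paper in how you show $Z\subset L$.

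\emph{The paper's route.} It assumes $Z\not\subseteq L$ and picks a divisor $H_j\supset L$ of multidegree $\epsilon_j$ with $Z\cap H_j\ne Z$. It notes $\deg(\Res_{H_j}(Z))\le t-1$ and applies \Cref{lemma: famoso per SV} a second time, to the residual in multidegree $(d_1,\dots,d_j-1,\dots,d_k)$, to see that its $h^1$ vanishes. This gives $h^1(\Ii_{Z\cap H_j,H_j}(d_1,\dots,d_k))>0$. Finally it uses that $H_j$ is arithmetically Cohen--Macaulay to transfer this to $Y$ and contradict minimality. The case $t=1$ has to be settled separately by a degree count.

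\emph{Your route.} You work directly on $L$. Since $\Oo_Y(d_1,\dots,d_k)_{|L}\cong\Oo_{\PP^1}(t)$, we have $\dim\langle\nu(L)\rangle\le t$. So $\nu(Z\cap L)$, of degree at least $t+2$, satisfies $\dim\langle \nu(Z\cap L)\rangle<\deg(Z\cap L)-1$. But \Cref{lemma: properties critical scheme} says every proper subscheme of $\nu(Z)$ is linearly independent, so $Z\cap L=Z$. This is the span version of the paper's own argument for the Veronese analogue, \Cref{lemma: dato A in Bb esiste retta contenente un po di pt}. It is shorter and uniform in $t$. It needs neither a second application of \Cref{lemma: famoso per SV} nor any Cohen--Macaulay property.

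Only the upper bound $\dim\langle\nu(L)\rangle\le t$ is used. That bound is automatic because $h^0(\Oo_{\PP^1}(t))=t+1$, so the step you flag as the main obstacle is not actually one.

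Two small wording fixes:
\begin{enumerate}
\item $L$ is a line inside a fiber $\eta_i^{-1}(y)\cong\PP^{n_i}$, not the fiber itself unless $n_i=1$.
\item The bound $\deg(L\cap Z)\le 2\#(A\cap L)+1$ holds because the only component of $Z$ not supported on $A$ is the reduced point $q$. It is not because $q$ is the only degree-one component; components at points of $A$ may also have degree $1$.
\end{enumerate}
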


\begin{proof}   
Take $A\subset Y$ with $\nu(A)\in \Bb_r(X)$ and fix $q\in Y$ such that $\nu(q)\in B(\nu(A))$. Let $Z\subset Y$ be a zero-dimensional scheme such that $\nu(Z)$ is a critical scheme for the pair $(\nu(A),\nu(q))$. Notice that such a scheme always exists by \Cref{lemma: existence critical scheme characterization}. Hence $\deg(Z)\leq 2r+1\leq 2t+1$ and $h^1(\Ii_Z(d_1,\dots,d_k))>0$. By \Cref{lemma: famoso per SV} there is a curve $L\subset Y$ of multidegree $\varepsilon_i$ for some $i\in \{1,\dots,k\}$ with $d_i=t$ such that $\deg(L\cap Z)\geq t+2$. Hence $\#(A\cap L)\geq \lceil (t+1)/2\rceil $. We now want to prove that $Z\subset L$ because this would give $q\in L$.

If $t=1$ then $r=1$ and on one hand we have $\deg(Z)\leq 2t+1=3$ while on the other hand we just proved $\deg(Z\cap L)\geq t+2=3$. So assume $t\geq 2$. Assume by contradiction that $Z\not\subseteq L$. Since $L$ is scheme-theoretically cut out by hypersurfaces of degree $\varepsilon_j$ for $j\in \{1,\dots,k \}$, there is $j\in \{ 1,\dots,k\}$ such that $Z\cap H_j\neq Z$. Consider the residual exact sequence of $H_j$:
$$
0\rightarrow \Ii_{\Res_{H_j}(Z)}(d_1\dots,d_{j-1},d_j-1,d_{j+1}\dots,d_k)\rightarrow \Ii_{Z}(d_1,\dots,d_k)\rightarrow \Ii_{H_j\cap Z,H_j}(d_1,\dots,d_k).
$$Since $\deg(Z\cap H_j)\geq \deg(Z\cap L)\geq t+2$ we have that $\deg(\Res_{H_j}(Z))=\deg(Z)-\deg(Z\cap H_j)\leq t-1$ and we conclude that $h^1(\Ii_{\Res_{H_j}(Z)}(d_1\dots,d_{j-1},d_j-1,d_{j+1}\dots,d_k))=0$, otherwise we would have a contradiction with \Cref{lemma: famoso per SV}. Hence, $h^1(\Ii_{Z\cap H_j,H_j}(d_1,\dots,d_k))>0$. Now since $H_j$ is arithmetically Cohen-Macaulay, we have $h^1(\Ii_{Z\cap H_j}(d_1,\dots,d_k))=h^1(H_j,\Ii_{Z\cap H_j,H_j}(d_1,\dots,d_k))>0$. By \Cref{lemma: properties critical scheme} (cf. also \Cref{remark: riformulazione proprietà schema critico in cohomologia per casi belli}) we know that $h^1(\Ii_{\tilde{Z}})(d_1,\dots,d_k)=0$ for any $\tilde{Z}\subsetneq Z $. Hence we get $Z\cap H_j=Z$ which is impossible since we assumed $Z\not\subseteq L$.
\end{proof}

We are now ready to characterize emptiness of base loci for any Segre-Veronese variety, including Segre. Moreover we give a complete description of the base locus $ B(A)\subset X\setminus A$ associated to a given $A\in \Bb_r(X) $ for small $r$.
\begin{theorem}\label{theorem: Bb per SV}
Fix $k\ge2$ and positive integers $n_i$, $d_i$, for $1\le i\le k$. 
Set $Y:= \PP^{n_1}\times \cdots \times \PP^{n_k}$, $X:= \nu(Y)$ and $t:= \min\{d_1,\dots,d_k\}$. 
Then $$\Bb_r(X) \ne \emptyset \mbox{ if and only if }2r\ge t+1.$$ 
 Assume $\lceil (t+1)/2\rceil \leq r \leq t$ and let $A\in \Ss_r( Y)$ be such that $\nu(A)\in \Bb_r(X)$. 
 \begin{itemize}
     \item  If $r=t=1$, then $B(\nu(A))\cup \nu(A)$ is the Segre-Veronese image of the union of all rulings of $Y$ with $d_i=t$. 
     \item If $t\ge 5$ and $r=t$ odd then either there exists a curve $L$ of multidegree $\epsilon_i$, with $d_i=t$, containing at least $(t+1)/2$ points of $A$ and such that $B(\nu(A))=\nu(L\setminus A)$, or there is 
     another curve $R$ of multidegree $\epsilon_j$ with $d_j=t$, such that $R$ and $L$ intersect in a point of $A$ and both contain $(t+1)/2$ points of $A$.
     We have $B(\nu(A)) =\nu((L\cup R)\setminus A)$ and moreover if $i=j$ then $n_i\geq 2$.
    \item If either $r<t$ or $r=t$ is even, then 
    there exists a curve $L$ of multidegree $\epsilon_i$, with $d_i=t$, containing at least $\lceil(t+1)/2\rceil$ points of $A$ and such that $B(\nu(A))=\nu(L\setminus A)$. 
    \item If $r=t=3$ then 
    \begin{itemize}
        \item either there is a curve $L$ of multidegree $\varepsilon_i$ with $d_i=t$ containing at least two points of $A$ and such that $B(\nu(A))=\nu(L\setminus A)$,
        \item or there are two curves $L,R$ of multidegree $\varepsilon_i,\varepsilon_j$ with $i\neq j$ such that $d_i=d_j=t$, each of them containing two of the three points of $A$, and intersecting in one point of $A$, 
        \item  or 
    there is an index $i$ such that $n_i\geq 2$ and $d_i=t$
    and three curves $L,R,S\subset Y$ of multidegree $\varepsilon_i$, passing each of them through two points of $A$. 
    In this case $B(\nu(A))=\nu((L\cup R\cup S)\setminus A)$.
    \end{itemize}
    
 \end{itemize}
\end{theorem}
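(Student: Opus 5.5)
The plan is to mirror the proof of \Cref{theorem: Bb per veronesi}, with \Cref{lemma: dato A in Bb esiste retta contenente un po di pt SV} replacing \Cref{lemma: dato A in Bb esiste retta contenente un po di pt}.

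\textbf{Nonemptiness for $2r\ge t+1$.} Fix $i$ with $d_i=t$ and a curve $L\subset Y$ of multidegree $\epsilon_i$, so $L\cong\PP^1$ and $\Oo_Y(d_1,\dots,d_k)_{|L}\cong\Oo_{\PP^1}(t)$. Take $A\in\Ss_r(Y)$ with $\#(A\cap L)\ge\lceil(t+1)/2\rceil$. Then $\deg(2A\cap L)\ge t+1$, so every section of $\Ii_{2A}(d_1,\dots,d_k)$ vanishes on $L$. This is the analogue of \Cref{example: se ci sono abbastanza punti su una retta questa è nel base locus}. Hence $\nu(L\setminus A)\subseteq\langle 2\nu(A)\rangle$. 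We must also check $\langle 2\nu(A)\rangle\ne\PP^N$. Since $k\ge2$ and $\dim X\ge2$, we can choose $A$ so that the double points give $h^0>0$, for instance by keeping all points on $L$ (so $\langle 2\nu(A)\rangle$ has small dimension) and, for $r$ large, arguing as in \Cref{remark: a partire da punti in Bb ed Ee si costruiscono ancora cose in Bb ad Ee con ip h0 non nullo}. Nonemptiness for $r>t$ also follows by adding points.

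\textbf{Emptiness for $2r<t+1$.} Here $r\le t$. If $\nu(A)\in\Bb_r(X)$ and $\nu(q)\in B(\nu(A))$, then \Cref{lemma: dato A in Bb esiste retta contenente un po di pt SV} gives a curve $L$ with $\#(A\cap L)\ge\lceil(t+1)/2\rceil$, which forces $2r\ge t+1$.

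\textbf{Description of $B(\nu(A))$ for $\lceil(t+1)/2\rceil\le r\le t$.} By the lemma, every $q$ with $\nu(q)\in B(\nu(A))$ lies on some curve of multidegree $\epsilon_i$, with $d_i=t$, containing at least $\lceil(t+1)/2\rceil$ points of $A$. Conversely, every such curve lies in the base locus by the first step. So $B(\nu(A))\cup\nu(A)$ is exactly the union of the images of all such curves, and the work is to count how many such curves exist.

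Two distinct curves of multidegree $\epsilon_i$ and $\epsilon_j$ meet in at most one point. This holds because each is a line in a fiber, and the fibers of $\eta_i$ and $\eta_j$ for $i\ne j$ meet in at most a point. Two such curves need at least $2\lceil(t+1)/2\rceil-1\le r\le t$ points of $A$, which forces:
\begin{itemize}
\item $t$ odd and $r=t$;
\item the two curves meet in a point of $A$;
\item each curve contains exactly $(t+1)/2$ points of $A$.
\end{itemize}
A third curve would need $3(t+1)/2-3\le t$ points, i.e.\ $t\le3$. This gives the $t\ge5$ case and the cases ``$r<t$ or $r=t$ even''. When $i=j$, two distinct curves of multidegree $\epsilon_i$ through a common point lie in the same fiber $\PP^{n_i}$, so they must be distinct lines there and $n_i\ge2$.

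For $t=1$ we have $r=1$, and every ruling curve of multidegree $\epsilon_i$ with $d_i=1$ through the point lies in $X$ and in its tangent space. For $r=t=3$ we enumerate configurations of three points in which pairs lie on ruling curves of the allowed multidegrees. Three curves all pairwise meeting in points of $A$ must be coplanar lines in a single $\PP^{n_i}$-fiber: if two had different indices $i\ne j$, a third curve meeting both at distinct points of $A$ would be impossible, since the three points would have to differ in only one factor pairwise. This yields the listed subcases.

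The main obstacle is this combinatorics of ruling curves in the multiprojective setting, especially for $t=3$: we must show which index patterns are compatible and that no further curves appear. We also need to secure $\langle 2\nu(A)\rangle\ne\PP^N$ in the existence construction.
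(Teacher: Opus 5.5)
Your treatment of the ``only if'' direction and of the description of $B(\nu(A))$ follows the paper's argument. \Cref{lemma: dato A in Bb esiste retta contenente un po di pt SV} puts every point of $B(\nu(A))$ on a curve of multidegree $\epsilon_i$, with $d_i=t$, carrying at least $\lceil (t+1)/2\rceil$ points of $A$. Every such curve lies in the base locus, and the case list comes from counting how such curves can meet inside $A$. Your explicit converse, your remark that the lemma needs $r\le t$, and your argument that for $r=t=3$ a third curve forces a common index are all correct, and somewhat cleaner than the paper's write-up.

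The gap is the one you flag yourself: nonemptiness needs $\langle 2\nu(A)\rangle\ne\PP^N$, and neither justification you give establishes it. \Cref{remark: a partire da punti in Bb ed Ee si costruiscono ancora cose in Bb ad Ee con ip h0 non nullo} assumes $\langle 2(A\cup F)\rangle\ne\PP^N$, so citing it is circular. ``Keeping all points on $L$'' requires exhibiting an actual section. Here is how to do it:
\begin{enumerate}
\item Put all $r$ points on $L=\{p_1\}\times\cdots\times\ell\times\cdots\times\{p_k\}$. For $j\ne i$, let $h_j$ be a linear form on $\PP^{n_j}$ vanishing at $p_j$; then $h_j$ vanishes on $L$.
\item Hence $h_j^2g\in H^0(\Ii_{2A}(d_1,\dots,d_k))$ for every $A\subset L$ and every $g\in H^0(\Oo_Y(d_1,\dots,d_k)(-2\epsilon_j))$. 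Such a section is nonzero for suitable $g$ whenever $d_j\ge 2$, in particular whenever $t\ge 2$.
\item If $t=1$ and $d_j=1$ for all $j\ne i$, use $h_jh_{j'}g$ when $k\ge 3$.
\item For $k=2$ and $d=(1,1)$, use $\lambda h_j$ with $\lambda$ a linear form on $\PP^{n_i}$ vanishing on $\ell$. This exists when $n_i\ge 2$, and you may choose $i$ freely since $d_1=d_2=t$.
\end{enumerate}

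The one remaining case cannot be repaired: $Y=\PP^1\times\PP^1$ with $d=(1,1)$. There $X$ is a smooth quadric surface in $\PP^3$, whose tangent planes at distinct points are distinct. So $\langle 2\nu(A)\rangle=\PP^3$ for every $A$ with $r\ge 2$, and $\Bb_r(X)=\emptyset$ even though $2r\ge t+1$. The paper's proof deduces nonemptiness only from ``$L$ is contained in the base locus of $\Ii_{2A}(d_1,\dots,d_k)$'', so it has the same omission. The ``if'' direction of the statement must exclude this case.
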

\begin{proof}
Assume $\Bb_r(X)\neq \emptyset$, let $A\subset Y$ be of cardinality $r$ with $\nu(A)\in \Bb_r(X)$ and fix $q\in Y\setminus A$ with $\nu(q)\in B(\nu(A))$. By \Cref{lemma: dato A in Bb esiste retta contenente un po di pt SV} there exists $i\in \{1,\dots ,k\}$ such that $d_i=t$ and a curve $L\subset Y$ of multidegree $\epsilon_i$ such that $ \#(A\cap L)\geq \lceil(t+1)/2\rceil$, from which it follows that $2r\geq t+1$. On the other hand, let $A\subset Y$ be a finite set such that $\#(A\cap L)\geq \lceil (t+1)/2\rceil $ for some line $L\subset Y$ of multidegree $\varepsilon_i$ where $i$ is such that $d_i=t$. Since $\deg(2A\cap L)=\#2(A\cap L)\geq t+1$ we have that $L$ is contained in the base locus of $\Ii_{2A}(d_1,\dots,d_k)$ so $\Bb_r(X)\neq \emptyset $ for all integer $r$ such that $2r\geq t+1$.
\medskip

Now we are left to prove the items of the statement. For this, we focus on the description of $B(\nu(A))$ for some $A\subset Y $ such that $\nu(A)\in \Bb_r(X)$ with $r\leq t$. Take for the moment $t=1$ so that $r=1$. To understand $B(\nu(A))$ we have to look at the intersection of $\nu(Y)$ with the tangent space $T_{\nu(A)}\nu(Y)$. These are exactly the Segre-Veronese image of all points lying on a ruling of $Y$ corresponding to $d_i=t$.

So from now we can consider $t\ge 2$. Since $2r\ge t+1$ we have that $r\ge 2$. Assume for the moment that $B(\nu(A))\not\subseteq \nu(L)$ and note that $L$ is uniquely determined by $2$ of its points. Take $q'\in Y\setminus (L\cup A)$ with $\nu(q')\in B(\nu(A))$. By \Cref{lemma: dato A in Bb esiste retta contenente un po di pt SV} there is $j\in \{1,\dots ,k\}$ such that $d_j=t$ and a curve $R\subset Y$ of multidegree $\epsilon_j$ such that $q'\in R$ and $\#(A\cap L)\ge \lceil (t+1)/2\rceil$. Notice that $q'\notin L$ so the two lines $L,R$ are distinct and they intersect in at most one point. Since $r\leq t$, $\#(A\cap L)\geq \lceil(t+1)/2\rceil$, $\#(A\cap R)\geq \lceil (t+1)/2\rceil$ we must have $t$ odd, that the intersection between $L$ and $R$ is one point of $A$, $r=t$, $A\subset L\cup R$ and $\#(A\cap L)=\#(A\cap R)=(t+1)/2 $. When this configuration occurs, if $ i=j$ then this forces the dimension of the $i$-th factor of $Y$ to be $n_i\geq 2$. Moreover, in the particular case $r=t=3$, if $i=j$ and $n_i\geq 2$ then there is actually a third curve $T\subset Y$ of multidegree $\varepsilon_i$ containing two of the three points of $A$ and such that $\#(T\cap L)=\#(T\cap R)=\#(R\cap L)=1$. So $B(\nu(A))=\nu(L\cup R\cup T)\setminus \nu(A)$. Otherwise since $r=t>3$ then no other line contains $\lceil(t+1)/2\rceil$ points of $A$ and $B(\nu(A))\cup \nu(A)=\nu(L\cup R)$.

We are left with the case $B(\nu(A))\subset \nu(L)$, which might happen for any $t\geq 2$. In particular, this gives $B(\nu(A))=\nu(L\setminus A)$.  We remark that this is the only possibility happening in the case $r<t$ or $r=t$ and $t $ even.  
\end{proof}

Now that we characterized emptiness for $\Bb_r(X)$, we want to do the same for $\Ee_r(X)$ and $\tilde{\Ee}_r(X)$. Let us start with some preliminary results.

\begin{lemma}\label{lemma: al piu t pt tutti su retta non danno una tupla in E per SV}
Let $k\ge 2$, $t:= \min\{d_1,\dots,d_k\}$ and let $L\subset Y$ be a curve of multidegree $\varepsilon_i$ with $d_i=t$. Let $A\subset L$ be a set of $r$ points and let $q\in L\setminus A$. 
 If $r\le t$, then $\nu(q)\notin E(\nu(A))$.
\end{lemma}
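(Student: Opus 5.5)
We mimic the argument used for aligned points in the proof of \Cref{theorem: Ee e tilde Ee per veronesi}: to show $\nu(q)\notin E(\nu(A))$ it suffices to exhibit a divisor $D\in|\Oo_Y(d_1,\dots,d_k)|$ that is singular at every point of $A$ but not singular at $q$. Such a $D$ gives $H^0(\Ii_{2A\cup 2q}(d_1,\dots,d_k))\neq H^0(\Ii_{2A}(d_1,\dots,d_k))$, hence $\langle 2(A\cup\{q\})\rangle\neq\langle 2A\rangle$.

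Write $L=\{y\}\times\PP^1$ in the $i$-th slot, i.e. $\pi_j(L)$ is a point $y_j$ for $j\ne i$, and $\pi_i(L)$ is a line $\ell\subset\PP^{n_i}$. The construction has three pieces:
\begin{itemize}
\item[(a)] Choose $r$ general hypersurfaces $K_1,\dots,K_r\in|\Oo_Y(\varepsilon_i)|$, i.e. pullbacks of hyperplanes of $\PP^{n_i}$, with $K_s$ passing through the $s$-th point $a_s\in A$. Since the points of $A$ have distinct $\pi_i$-images on $\ell$ and $q\in L\setminus A$, we may take each $K_s$ to avoid $q$ and the other points of $A$.
\item[(b)] Choose $M\in|\Oo_Y(\varepsilon_j)|$ for some $j\neq i$ (possible since $k\ge 2$), the pullback of a hyperplane through $y_j$. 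Then $M\supset L$, so $M$ contains $A\cup\{q\}$, and $M$ is smooth there.
\item[(c)] Use the remaining degree to make the multidegree exactly $(d_1,\dots,d_k)$. So far we have $K_1+\dots+K_r+M$ of multidegree $r\varepsilon_i+\varepsilon_j$. Add $G\in|\Oo_Y(d_1\varepsilon_1+\dots+d_k\varepsilon_k-r\varepsilon_i-\varepsilon_j)|$. This class is effective because $r\le t\le d_i$ and $d_j\ge1$. Choose $G$ not passing through $A\cup\{q\}$; this is possible since the linear system is base point free, being a product of nonnegative degrees.
\end{itemize}

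Set $D=K_1+\dots+K_r+M+G$. Each $a_s$ lies on both $K_s$ and $M$, so $a_s\in\mathrm{Sing}(D)$. At $q$, the only component of $D$ through $q$ is $M$, which is smooth at $q$, so $D$ is smooth at $q$. This proves the lemma.

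The degree count is the only real step: the hypothesis $r\le t$ is what makes $r\varepsilon_i$ fit inside $d_i$, and $k\ge2$ supplies the transversal factor $M$. The main subtlety to check is genericity in (a): a hyperplane of $\PP^{n_i}$ through $\pi_i(a_s)$ must avoid $\pi_i(q)$. This holds since $\pi_i(q)\neq\pi_i(a_s)$, because $\pi_i|_L$ is an isomorphism onto $\ell$. Note that this argument never uses the condition $d_i=t$; only $r\le d_i$ is needed.
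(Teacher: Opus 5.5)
Your proof is correct and takes essentially the same route as the paper. The paper also chooses $r$ divisors in $|\Oo_Y(\varepsilon_i)|$ through the points of $A$ that avoid $q$, together with one divisor in $|\Ii_q(\varepsilon_j)|$ containing $L$, and observes that their union is singular along $A$ but smooth at $q$. Your step (c) adds the base-point-free filler $G$ that avoids $A\cup\{q\}$, which makes explicit the completion to multidegree $(d_1,\dots,d_k)$ that the paper leaves implicit. Your remark that only $r\le d_i$ is used is accurate.
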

 \begin{proof}
We remark that since $\Oo_{\PP^{n_i}}(1)$ is very ample, for all $a\in A$ we can find a divisor $H_a\in |\Oo_Y(\varepsilon_i)|$ passing through $a$ and not passing through $q$. If we call  $H:=\cup_{a\in A}H_a$ then we have that $H\in |\Oo_Y(r\cdot \varepsilon_i)|$ contains $A\cap L$ but does not contain $q$. Fix $j\neq i$. By the definition of multidegree of a curve, the set $\pi_j(L)$ is a point and $\pi_j(L)=\pi_j(q)$, so for a $W\in |\Oo_q(\varepsilon_j)|$ we have that $W$ contains $L$. We conclude since $H\cup W$ is smooth at $ q$ and it is singular in $A\cap L$.
\end{proof}

In the next lemma we look at a configuration of $ t+1$ points lying on a curve of multidegree $\varepsilon_i$ for some $i$ such that $d_i=t$. We will show how this configuration of points is an example of points lying in $\tilde{\Ee}_{r+1}(X)$. 
\begin{lemma}\label{lemma: t+1 punti su una retta sono in Ee caso SV}
Let $k\ge 2$, $t:=\min\{d_1,\dots,d_k\}$ and let $L\subset Y$ be a curve of multidegree $\varepsilon_i$ with $d_i=t$. Let $A\subset L$ be a set of $t+1$ points and fix $q\in L\setminus A$. Then $H^0(\Ii_{2A\cup 2q}(d_1,\dots,d_k)) =H^0(\Ii_{2A}(d_1,\dots,d_k))$ and hence $\nu(L\setminus A)\subseteq E(\nu(A))$. 
\end{lemma}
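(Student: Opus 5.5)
We need to show that every form $f\in H^0(\Ii_{2A}(d_1,\dots,d_k))$ is singular at $q$, i.e.\ vanishes on $2q$. The plan mirrors \Cref{ex: buono retta doppia con d punti è tangential contact locus}, with the plane used there replaced by a surface $\PP^1\times\PP^1$ or $\PP^2$ obtained by adding one direction to $L$.

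First we show that $f$ vanishes on $L$. Since $L$ has multidegree $\varepsilon_i$, we have $\Oo_L(d_1,\dots,d_k)\cong\Oo_{\PP^1}(t)$. Moreover $\deg(2A\cap L)=2(t+1)\ge t+1$, so $f_{|L}=0$, as in \Cref{example: se ci sono abbastanza punti su una retta questa è nel base locus}. Thus $L$ lies in the base locus, and every tangent vector $v\subset 2q$ with $v\subset L$ is automatically in the base locus.

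Next we handle a tangent vector $v$ at $q$ not contained in $L$. Because $2q$ is spanned by degree-$2$ subschemes, it suffices to treat such $v$. Write $T_qY=\bigoplus_j T_{\pi_j(q)}\PP^{n_j}$, where $T_qL$ is a line in the $i$-th summand. The condition $f\in H^0(\Ii_{2q})$ is linear in the direction, so it suffices to check basis directions. These are directions in the $i$-th factor transverse to $L$ (when $n_i\ge2$), and directions in a factor $j\ne i$. In each case choose a smooth surface $S\supset L$ with $T_qS\ni v$.

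If the direction lies in factor $i$, let $S=\pi_i^{-1}(P)\cap\bigcap_{j\ne i}\pi_j^{-1}(\pi_j(L))\cong\PP^2$, where $P\subset\PP^{n_i}$ is the plane spanned by $\pi_i(L)$ and the direction. Then $\Oo_S(d_1,\dots,d_k)=\Oo_{\PP^2}(t)$ and $L$ is a line in $S$.

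If the direction lies in factor $j\neq i$, let $S\cong\PP^1\times\PP^1$ be $\pi_i^{-1}(\pi_i(L))\cap\pi_j^{-1}(\ell)\cap\bigcap_{m\ne i,j}\pi_m^{-1}(\pi_m(L))$, where $\ell\subset\PP^{n_j}$ is a line through $\pi_j(q)$ in that direction. Then $\Oo_S(d_1,\dots,d_k)=\Oo(t,d_j)$ with $d_j\ge t$, and $L$ is a fibre of type $(1,0)$.

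In both cases we restrict $f$ to $S$ and get $f_{|S}\in H^0(\Oo_S(D))$ vanishing on $2A\cap S$, which contains $2(A)$ in $S$. Since $f_{|S}$ vanishes on $L$, write $f_{|S}=\ell_L\cdot g$, where $\ell_L$ is the equation of $L$ and $g\in H^0(\Oo_S(D-L))$. Vanishing of $f$ to order $2$ at the points of $A$ forces $g$ to vanish at $A\subset L$. The restriction $g_{|L}$ is a section of degree $t$ on $L\cong\PP^1$: in the $\PP^2$ case $\Oo(t-1)$ restricted to a line has degree $t-1$, and in the $\PP^1\times\PP^1$ case $\Oo(t-1,d_j)$ restricted to $\PP^1\times\{pt\}$ has degree $t-1$. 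Since it has $t+1>t-1$ zeros, $g_{|L}=0$. Hence $L^2\mid f_{|S}$, so $f_{|S}$ vanishes on $2L\ni v$, and therefore $f$ vanishes on $v$.

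The main obstacle is the bookkeeping in the $\PP^1\times\PP^1$ case: we must check that the divisor class of $L$ in $S$ is exactly one of the rulings, so that the residual degree on $L$ drops by precisely one. We must also check that $S$ is a complete intersection of divisors of the form $\varepsilon_m$, so that restricting a section to $S$ behaves as claimed. These checks are needed for the argument to run uniformly.

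Combining the directions, $2q$ lies in the base locus. Therefore $H^0(\Ii_{2A\cup2q}(d_1,\dots,d_k))=H^0(\Ii_{2A}(d_1,\dots,d_k))$, and so $\nu(q)\in E(\nu(A))$ for every $q\in L\setminus A$, which gives $\nu(L\setminus A)\subseteq E(\nu(A))$. Here $\langle 2A\rangle\ne\PP^N$ is needed only for membership in $\tilde{\Ee}$; the span equality itself holds regardless.
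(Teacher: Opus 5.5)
Your argument is correct in substance and takes a different route from the paper. One computation is wrong, though, and it sits exactly where the hypothesis $\#A=t+1$ is used.

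In the $\PP^1\times\PP^1$ case, $L=\pi_i(L)\times\{\pi_j(q)\}$ inside $S\cong\pi_i(L)\times\ell$ has bidegree $(1,0)$ as a \emph{curve}. As a \emph{divisor} on $S$, however, it is the zero locus of a section $\lambda$ of $\Oo_S(0,1)$. So $f_{|S}=\lambda g$ with $g\in H^0(\Oo_S(t,d_j-1))$, not $\Oo_S(t-1,d_j)$. Consequently $g_{|L}\in H^0(\Oo_{\PP^1}(t))$: the degree on $L$ does not drop at all, because $L$ is a fibre and $L^2=0$ on $S$. The step survives only because $g_{|L}$ has $t+1>t$ zeros. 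You should therefore replace your inequality $t+1>t-1$, and drop the claim in your last paragraph that the residual degree ``drops by precisely one''.

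A useful sanity check: with your count, $t$ collinear points would already suffice, which contradicts \Cref{lemma: al piu t pt tutti su retta non danno una tupla in E per SV}. Only for directions inside the $i$-th factor (your $\PP^2$ case, as in the Veronese example with $d$ collinear points) do $t$ points suffice.

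Two smaller remarks. First, you never need $S$ to be a complete intersection, since you only restrict a given $f$ and never lift sections. Second, with the paper's definition $C(A)=\sing(\langle 2A\rangle\cap X_{\reg})$, the inclusion $\nu(L\setminus A)\subseteq E(\nu(A))$ itself requires $\langle 2A\rangle\neq\PP^N$, not only membership in $\tilde{\Ee}$. This can fail, e.g.\ for two points on a ruling of $\nu(\PP^1\times\PP^1)\subset\PP^3$ with $d=(1,1)$.

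As for the route: the paper also reduces to degree-$2$ schemes $v$ supported at $q$, but splits according to whether $v\subset T=\eta_i^{-1}(\eta_i(q))$.
\begin{itemize}
\item For $v\subset T$ it invokes the Veronese example on $T\cong\PP^{n_i}$ and inducts on the remaining factors via residual sequences with respect to $T$.
\item For $v\not\subset T$ it takes $H\in|\Ii_{\pi_j(q)}(\varepsilon_j)|$ with $\pi_j(v)\not\subset H$, uses $\Res_H(2A\cup v)=A\cup\{q\}$, and runs a snake-lemma argument on the two residual sequences.
\end{itemize}
Your approach is different. Since $f(q)=0$, $df_q$ is a well-defined linear form on $T_qY$ that can be tested on a basis. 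Each basis direction lies on a product surface through $L$ on which $f_{|S}$ is divisible by $\lambda^2$. This is more elementary and uniform: no cohomology on $Y$ and no induction. It also shows transparently why $t+1$ points are needed here, while $d$ points suffice for Veronese varieties.

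It also avoids the delicate point in the paper's second case, which relies on $h^1(\Ii_{A\cup\{q\}}(d_1,\dots,d_k)(-\varepsilon_j))=0$ (in your indexing). Here $A\cup\{q\}$ consists of $t+2$ points of $L$, and $\Oo_L(d_1,\dots,d_k)(-\varepsilon_j)\cong\Oo_{\PP^1}(t)$. So $q$ imposes no condition, and that $h^1$ is in fact at least $1$. Your direct computation needs no vanishing of this kind.
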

\begin{proof}
Without loss of generality we take $i=1$, so from now $L$ is of multidegree $\varepsilon_1$ where $d_1=t$.

To prove the lemma it is sufficient to prove that $H^0(\Ii_{2A\cup v}(t,d_2,\dots,d_k)) =H^0(\Ii_{2A}(t,d_2,\dots,d_k))$ for every connected degree $2$ zero-dimensional scheme $v$ such that $v_{\red} =q$. 

Since $L$ is of multidegree $\varepsilon_1$, we can write $L $ as $L=\PP^1\times \{ p_2\}\times \cdots \times \{p_k\}$ for some $p_i\in \PP^{n_i}$, and we take $q=(q_1,p_2,\dots,p_k)$ for some $q_1\in \PP^1$. Set $T:= \eta_1^{-1}(\eta_1(q))$ (see \Cref{notation SV}) so that $T =\PP^{n_1}\times \{p_2\}\times \cdots \times \{p_k\}$. We will workout all possibilities for a fixed connected degree $2$ zero-dimensional scheme $v$ such that $v_{\red} =q$ depending on whether it is included in $T$ or not.
\smallskip

\noindent\emph{Case $v\subset T$.} First assume $v\subset L$. Since $L$ is contained in the base locus of $\Ii_{2A}(t,d_2,\dots,d_k)$ the statement is proved in this case. 
Now assume $v\not\subset L$, so in particular we have that $n_1\ge 2$.
Let $Y=\PP^{n_1}\times Y_1$ (see \Cref{notation SV}), we prove the statement by induction on $\dim(Y_1)\ge1$. Assume $\dim(Y_1)=1$, so that $Y=\PP^{n_1}\times \PP^{1}$, $T=\PP^{n_1}\times \{p_2\}$ is a divisor in $|\Ii_{v}(\epsilon_2)|$. Consider the residual exact sequence of $T$ for $2A$:
$$0\to \Ii_{A}(t,d_2-1)\to \Ii_{2A\cup v}(t,d_2)\to \Ii_{(2A\cap T)\cup v, T}(t,d_2)\to0. $$
By \Cref{ex: buono retta doppia con d punti è tangential contact locus} we have
$h^0(\Ii_{(2A\cap T)\cup v, T}(t,d_2))=h^0(\Ii_{(2A\cap T), T}(t,d_2))$.
Since $A\subset L$ and $\deg(A)=t+1$, we have $h^1(\Ii_{A}(t,d_2-1))=h^1(\Ii_{A,L}(t,d_2-1))=0$. Hence we have
$$h^0(\Ii_{2A\cup v}(t,d_2))=h^0(\Ii_{A}(t,d_2-1))+h^0(\Ii_{(2A\cap T)\cup v, T}(t,d_2))=$$
$$=h^0(\Ii_{A}(t,d_2-1))+
h^0(\Ii_{(2A\cap T), T}(t,d_2))\ge h^0(\Ii_{2A}(t,d_2)))$$
and this concludes the proof of the base case of the induction. Now it is easy to complete the inductive step iterating the same argument. 
\smallskip

\noindent\emph{Case $v\nsubseteq T$.} Now assume $v\nsubseteq T$. This is equivalent to assuming the existence of $i\in \{2,\dots,k\}$ such that the scheme-theoretic image $\pi_i(v)$ of $v$ by the submersion $\pi_i$ is not $p_i=\pi_i(q)$ with its reduced structure. In other words, $\pi_i(v)$ is a degree $2$ zero-dimensional scheme supported at $p_i$. Since $\Oo_{\PP^{n_i}}(1)$ is very ample, there is $H\in |\Ii_{p_i}(\epsilon_i)|$ such that $\pi_i(v)\nsubseteq H$. Hence $(2A\cup v)\cap H = (2A\cap H)\cup \{q\}$ and
$\Res_H(2A\cup v) = A\cup \{q\}$. Consider the residual exact sequence of $H$
\begin{equation*}
0\to \Ii_{A\cup \{q\}}(t,d_2,\dots,d_k)(-\epsilon_i)\to \Ii_{2A\cup v}(t,d_2,\dots,d_k)\stackrel{u}{\to} \Ii_{(2A\cup \{q\})\cap H,H}(t,d_2,\dots,d_k)\to 0,
\end{equation*}
and let $\tilde{u}$ be the map induced in the cohomology by $u$. Notice that  $h^1(\Ii_{A}(t,d_2,\dots,d_k))=0$ otherwise we would get a contradiction by \Cref{lemma: famoso per SV} and and clearly $h^1(\Oo_q)=0$. Hence, passing in cohomology, we can construct the following diagram:
\[
  \xymatrix@!R{
                  &  0               & 0                  & 0                      \\
0 \ar[r]  & {H^0(\Ii_{A\cup q}(t,d_2,\dots,d_k)(-\varepsilon_i))} \ar[r]   & {H^0(\Ii_{2A\cup v}(t,d_2,\dots,d_k))} \ar[r]^{\tilde{u}}   & {H^0(\Ii_{(2A\cup q)\cap H,H}(t,d_2,\dots,d_k))}
 \ar@{-}`r[d]`[d]^\delta[d] 
                                                                               & \\
0 \ar[r]  & H^0(\Ii_{A}(t,d_2,\dots,d_k)(-\varepsilon_i))    \ar[r]    & H^0(\Ii_{2A}(t,d_2,\dots,d_k)) \ar[r]      & {H^0(\Ii_{2A\cap H,H}(t,d_2,\dots,d_k))} \ar[r] 
                \ar@{}+<-2.5cm,0cm>="p1"  
                                                                               & 0 \\
0 \ar[r]        & H^0(\Oo_{ q}) \ar[r]
                 \ar@{}[l]+<1cm,0cm>="p2" 
                 \ar`_l[l]+<1.03cm,0cm>`[d]`[d][d]^\delta 
                               & H^0(\Oo_v) \ar[r]   & H^0(\Oo_{q,H}) \ar[r]        & 0 \\
          & {0}  & {0}  & {0}  & 
\ar"1,2";"2,2"   \ar"1,3";"2,3"     \ar"1,4";"2,4"
\ar"2,2";"3,2"   \ar"2,3";"3,3"     \ar"2,4";"3,4"
\ar"3,2";"4,2" \ar"3,3";"4,3" \ar"3,4";"4,4"
\ar"4,2";"5,2"   \ar"4,3";"5,3"     \ar"4,4";"5,4"
\ar@{-}"p1";"p2"|!{"2,3";"3,3"}\hole^\delta
}
\]By the snake lemma we get that $h^1(\Ii_{A\cup q}(t,d_2,\dots,d_k)(-\varepsilon_i))=0$ and so the map $\tilde{u}$ is surjective and this concludes the proof. 
\end{proof}

We are now ready to characterize emptiness of $\Ee_r(X)$ and $\tilde{\Ee}_r(X)$ for any Segre-Veronese variety, including Segre. We point out that, compared to the Veronese case, while the numerical condition characterizing emptiness for the base locus is essentially analogous, the corresponding numerical condition for the strong base locus is shifted by one. 

\begin{theorem}\label{theorem: Ee e tilde Ee per segre-veronesi}
    Fix $k\ge2$ and positive integers $n_i$, $d_i$, for $1\le i\le k$. 
Set $Y:= \PP^{n_1}\times \cdots \times \PP^{n_k}$, $X:= \nu(Y)$ and $t:= \min\{d_1,\dots,d_k\}$. Then 
$$
\Ee_r(X) \text{ and } \tilde{\Ee}_r(X) \text{ are nonempty if and only if } r\geq t+1.
$$
\end{theorem}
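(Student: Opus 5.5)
The plan is to mirror the proof of \Cref{theorem: Ee e tilde Ee per veronesi}, using \Cref{lemma: t+1 punti su una retta sono in Ee caso SV} for existence and \Cref{lemma: al piu t pt tutti su retta non danno una tupla in E per SV} together with \Cref{lemma: dato A in Bb esiste retta contenente un po di pt SV} for nonexistence.

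\emph{The ``if'' part.} Fix $i$ with $d_i=t$, a curve $L\subset Y$ of multidegree $\epsilon_i$, and a set $A\subset L$ with $\#A=t+1$. By \Cref{lemma: t+1 punti su una retta sono in Ee caso SV}, we have $\nu(L\setminus A)\subseteq E(\nu(A))$, which is infinite. We also need $\langle 2A\rangle\ne\PP^N$. This should follow from a dimension count: $\deg(2A)=(t+1)(\dim Y+1)$ is much smaller than $N+1$, and any small degenerate example can be handled by taking a product of divisors through $A$ that misses a chosen point. Hence $\nu(A)\in\tilde{\Ee}_{t+1}(X)$. For $r>t+1$, apply \Cref{remark: una volta in Ee se aggiungo punti resto in Ee} by adding points of $E(\nu(A))$, again checking that $\langle 2A\rangle\neq\PP^N$. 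Alternatively, put $r$ points on $L$; the same lemma argument works because $L$ lies in the base locus as soon as $2\#(A\cap L)\ge t+1$.

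\emph{The ``only if'' part.} Since $\tilde{\Ee}_r\subseteq\Ee_r\subseteq\Bb_r$, it suffices to show $\Ee_r(X)=\emptyset$ for $r\le t$. By \Cref{theorem: Bb per SV} we may assume $\lceil(t+1)/2\rceil\le r\le t$. Suppose $\nu(q)\in E(\nu(A))$. By \Cref{lemma: dato A in Bb esiste retta contenente un po di pt SV} there is a curve $L$ of multidegree $\epsilon_i$, with $d_i=t$, such that $q\in L$ and $e:=\#(A\cap L)\ge\lceil(t+1)/2\rceil$.

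\emph{Case $A\subset L$.} \Cref{lemma: al piu t pt tutti su retta non danno una tupla in E per SV} gives $\nu(q)\notin E(\nu(A))$, a contradiction.

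\emph{Case $A\not\subset L$.} A direct separation argument extends that lemma. For each $a\in A\cap L$, choose $H_a\in|\Oo_Y(\epsilon_i)|$ through $a$ but not through $q$. For each $a\in A\setminus L$, choose $G_a$ singular at $a$ and missing $q$. Complete with a divisor $W\in|\Oo_Y(\epsilon_j)|$, $j\ne i$, containing $L$ (this exists since $\pi_j(L)$ is a point). A cleaner option is to take $G_a$ to be a union of two members of $|\Oo_Y(\epsilon_{i'})|$ through $a$ that avoid $q$; these exist because $a\notin L$ and $\Oo(\epsilon_{i'})$ separates points. The resulting divisor is singular along $A$ and smooth at $q$, since $q$ lies only on $W$. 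Its multidegree is bounded in each factor by $e+2(r-e)$ in the $i$-th factor, plus $1$ in the $j$-th factor. The main obstacle is this degree bookkeeping. We have $e+2(r-e)=2r-e\le 2t-\lceil(t+1)/2\rceil$, which can exceed $t$. So the points of $A\setminus L$ should instead be handled with divisors in factors other than $i$. If $a\notin L$, then either $\pi_i(a)\neq\pi_i(q)$ and a hyperplane in factor $i$ separates, or some $\pi_m(a)\ne\pi_m(q)$ with $m\ne i$. Two hyperplanes through $\pi_m(a)$ missing $\pi_m(q)$ then cost degree $2$ in factor $m$. Distributing these carefully requires $d_m\ge 2(r-e)$ plus $1$ if $m=j$, which is where the condition $r-e\le\lfloor(t-1)/2\rfloor$ enters. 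If this fails, fall back on the residual-sequence argument used in \Cref{theorem: r=d veronesi Ee}: take $H\supset L$ of multidegree $\epsilon_j$ (for $j\ne i$) and use the sequence of $2H$. This reduces the problem to the aligned case, once one shows $h^1(\Ii_{2(A\setminus L)}(d-2\epsilon_j))=0$ for at most $\lfloor(t-1)/2\rfloor$ double points, which follows from \Cref{lemma: famoso per SV} applied to a critical subscheme.

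Either route yields $H^0(\Ii_{2A\cup 2q})\ne H^0(\Ii_{2A})$, a contradiction.
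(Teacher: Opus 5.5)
Your overall architecture matches the paper's. You get existence from \Cref{lemma: t+1 punti su una retta sono in Ee caso SV} and \Cref{remark: una volta in Ee se aggiungo punti resto in Ee}. For $r\le t$ you take the curve $L$ from \Cref{lemma: dato A in Bb esiste retta contenente un po di pt SV}, handle the aligned case with \Cref{lemma: al piu t pt tutti su retta non danno una tupla in E per SV}, and build an explicit divisor singular at $A$ and smooth at $q$ when $A\not\subset L$.

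\textbf{The gap is in the case $A\not\subset L$.} Put $B:=A\setminus L$ and $T:=\eta_i^{-1}(\eta_i(q))$.
\begin{itemize}
\item A point $b\in B\cap T$ (possible whenever $n_i\ge 2$) satisfies $\pi_m(b)=\pi_m(q)$ for all $m\ne i$. So only factor $i$ can separate it from $q$.
\item Your two hyperplanes then cost $e+2\#(B\cap T)$ in degree $i$, which can exceed $d_i=t$. Take $t=r=3$ and $A=\{a_1,a_2,b\}$ with $a_1,a_2\in L$, $b\in T\setminus L$. Then $\nu(A)\in\Bb_3(X)$, since $L$ lies in the base locus, and you get degree $4>3$.
\item Your fallback does not rescue this case. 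Every member of $|\Oo_Y(\epsilon_j)|$ containing $L$ is $\pi_j^{-1}$ of a hyperplane through $\pi_j(q)$, so it contains $T\ni b$. Hence $b$ lands in the trace on $2H$, not in the residual.
\item In the example the residual is empty, and the trace problem is the original problem, not an aligned one. In general the residual sequence of $2H$ only strips off points with $\pi_j(a)\ne\pi_j(q)$, which your first construction already handles.
\end{itemize}

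\textbf{The missing idea} is that this same containment is what closes the degree count.
\begin{itemize}
\item A general $W\in|\Ii_q(\epsilon_j)|$ contains $L$ and all of $B_1:=\{a\in B:\pi_j(a)=\pi_j(q)\}\supseteq B\cap T$.
\item So each $a\in B_1$ needs only one more divisor through $a$ avoiding $q$. The paper uses a general $U_a\in|\Ii_a(1,\dots,1)|$; a single member of $|\Ii_a(\epsilon_m)|$ with $\pi_m(a)\ne\pi_m(q)$ also works.
\item Each $a\in B_2:=B\setminus B_1$ gets $2M_a$ with $M_a\in|\Ii_a(\epsilon_j)|$ general, so $q\notin M_a$.
\item Together with the $H_a$ for $a\in A\cap L$, the multidegree is at most $e+\#B_1\le r\le t$ in factor $i$. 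In factor $j$ it is at most $1+\#B_1+2\#B_2\le 1+2\#B\le t$, using $\#B\le\lfloor (t-1)/2\rfloor$. Elsewhere it is at most $\#B_1$.
\item Completing with a divisor that misses $A\cup\{q\}$ gives a section singular at $A$ and smooth at $q$, which is the required contradiction.
\end{itemize}

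\textbf{A separate problem in the ``if'' part.} Your dimension-count argument for $\langle 2A\rangle\ne\PP^N$ is wrong: $\deg(2A)=(t+1)(\dim Y+1)$ need not be below $N+1$. For $\PP^1\times\PP^1$ with $\Oo(1,1)$ it is $6>4$.

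What you need is a nonzero section singular along $L$, with $W\in|\Ii_L(\epsilon_j)|$ and $j\ne i$ as above. Possible choices are:
\begin{itemize}
\item $2W$ if $d_j\ge 2$;
\item $W\cup W'$ with $W'\in|\Ii_L(\epsilon_m)|$, $m\notin\{i,j\}$, if $k\ge 3$;
\item $W\cup H'$ with $H'\in|\Ii_L(\epsilon_i)|$ if $n_i\ge 2$, choosing $i$ among the indices with $d_i=t$.
\end{itemize}

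For the smooth quadric surface ($k=2$, $n_1=n_2=d_1=d_2=1$) no such section exists. There any two distinct points have distinct tangent planes spanning $\PP^3$, so $\Ee_r(X)=\emptyset$ for all $r\ge 2$, and the statement itself fails in that case. The paper's proof does not check this condition either.
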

\begin{proof}
By \Cref{lemma: t+1 punti su una retta sono in Ee caso SV} and \Cref{remark: una volta in Ee se aggiungo punti resto in Ee} we have that if $r\geq t+1$ then $\tilde{\Ee}_r(X)\neq \emptyset$, so in particular also $\Ee_r(X)\neq \emptyset$.  To conclude the proof of the theorem it is sufficient to prove that $\Ee_r(X)=\emptyset$ for all $r\le t$. 
Take $r\le t$ and assume by contradiction that $\Ee_r(X)\ne \emptyset$. Let $A\in \Ss_r( Y) $ be such that $\nu(A) \in \Ee_r(X)$ and let $q\in Y\setminus A$ with
$\nu(q)\in E(\nu(A))$. Since $\Ee_r(X)\subseteq \Bb_r(X)$, by \Cref{theorem: Bb per SV} we know that $2r\geq t+1  $. Moreover by \Cref{lemma: dato A in Bb esiste retta contenente un po di pt SV} there are $i\in \{1,\dots ,k\}$ with $d_i=t$ and a curve $L$ of multidegree $\epsilon_i$
such that $q\in L$ and $e:= \#(A\cap L)\geq \lceil(t+1)/2\rceil$.
Notice that by \Cref{lemma: al piu t pt tutti su retta non danno una tupla in E per SV} we cannot have that $A\subset L$. Hence we have $A\not\subset L$, so that $e \le r-1$. We prove the result by showing the existence of an element $G$ that is singular in $A$ but not in $q$. Set $B=A\setminus (A\cap L)$ and notice that $\#B=r-e\leq \lfloor(t-1)/2\rfloor$ so $2\#B\leq t-1$. Since $\Oo_{\PP^{n_i}}(\varepsilon_i)$ is very ample, for every $a\in A\cap L$ there is $H_a$ containing $a$ but not containing $q$. Set $H:=\cup_{a\in A}H_a $ and notice that $H\in|\Oo_{Y}(e\cdot \varepsilon_i)| $. Fix now $j\neq i$ and notice that by definition of multidegree of a curve, the set $\pi_j(L)$ is a point with $\pi_j(L)=\pi_j(q)$. The set $B\neq \emptyset$ can be split in the disjoint union $B=B_1\sqcup B_2$, where $B_1=\{ a\in B \, |\, \pi_j(a)=\pi_j(q) \}$ and $B_2=B\setminus B_2$, where eventually one of them can also be empty. For a geneal $W\in |\Ii_q(\varepsilon_j)|$ we have $W\supset L$ and $W\supset B_1$. Hence, so far $H\cup W$ is smooth at $q$, singular at each point of $A\cap L$ and it contains also $B_1$. Now, for each $a\in B_2$ the general $M_a\in |\Ii_a(\varepsilon_j)|$ does not contain $q$. Call $M=\cup_{a\in B_2}2M_a$ and notice that $H\cup W\cup M $ is smooth at $q$, it contains $B_1$ and it is singular at each point of $A\cap L$ and at each point of $B_2$. Lastly, for each $a\in B_1$ take a general $U_a\in |\Oo_Y(1,\dots,1)|$, so in particular $U:=\cup_{a\in B_1}U_a$ does not contain $q$. By construction the divisor $G_1:=H\cup W\cup M\cup U$ is singular in $A$, it is smooth at $q$ and has multidegree $(b_1,\dots,b_k)$ where 
\begin{itemize}
\item $b_i=e+\#B_1\leq e+\#B\leq t$, \item $b_j=1+2\#B_1+\#B_2\leq 1+2\#B\leq t$,
\item $b_k=\#B_1\leq \#B\leq t$, for all $k\notin \{i,j\}$.
\end{itemize}
Since $b_m\leq t_m$ for all $m\in \{ 1,\dots,k\}$, there is $G_2\in |\Oo_Y(d_1-b_1,\dots,d_k-b_k)|$ with $G_2\cap (A\cup \{ q\}) =\emptyset$ and we allow the case $G_2=\emptyset $ if $b_m=d_m$ for all $m$. 
We conclude since we have found a divisor $G:=G_1\cup G_2$ such that $A\subset \sing(G) $ and $q\notin \sing(G)$. 
\end{proof}

Going back again to the content of \Cref{connection terracini}, a comparison between $\Bb_r(X),\Ee_r(X)$ and $\TT_r(X)$ is trickier. In particular, our main \Cref{theorem: Bb per SV} and \Cref{theorem: Ee e tilde Ee per segre-veronesi} work for all Segre-Veronese varieties, including Segre. However, the literature for Terracini loci in the context of Segre varieties is much more limited and, to the best of our knowledge, restricted to \cite{BBS23}. If $X$ is a Segre variety, \Cref{theorem: Bb per SV} and \Cref{theorem: Ee e tilde Ee per segre-veronesi} give $\Ee_2(X)\neq \emptyset$ while $\TT_2(X)=\emptyset$ (cf. \cite[Proposition 2.9]{BBS23}). Considering a Segre-Veronese variety embedded in degree $d_1,\dots,d_k$ where at most one of the $d_i$'s is one, \cite[Theorem 7.10]{GSTT} classifies the first Terracini configuration. This is given by at least $\lceil (t+2)/2\rceil$ points that lie on a curve of multidegree $\varepsilon_i$, for $i$ such that $d_i=t$. Hence, if for instance $r\geq t+1 $ and we consider $r$ points all on a curve of multidegree $\varepsilon_i$ then they provide an example of both $\TT_r(X)$ and $\Ee_r(X)$.

\section{Connection with identifiability}\label{section: identifiability}
Let $X\subset \PP^N$ be an integral and nondegenerate variety. 
Recall that  $\Ss_r(X)$ is the variety parametrizing unordered sets of $r$ smooth points.
In this section we understand how $\Ee_r(X)$ connects with the notion of \emph{identifiability}. We briefly recall the different notions below.
\begin{definition}
A point $q\in \PP^N$ is \emph{identifiable} with respect to $X$ if there is $A\in \Ss_r(X)$ with $q\in \langle A \rangle$ and for any other $B\in \Ss_s(X)$ with $s\leq r$ we have that $q\notin \langle B \rangle $. In other words, $q$ is identifiable if it lies in a unique $r$-secant plane given by points of $X$.

A variety $X\subset \PP^N$ is $r$-\emph{identifiable} if a general element in $\sigma_r(X)$ has a unique expression as a combination of $r$ points of $X$.
\end{definition}
\cite[Proposition 2.4]{COtwd} states that if there exists a set of $r$ particular points $A\in \Ss_r(X)$ such that the span $\langle 2A \rangle$ contains $T_pX$ only if $p\in A$ then $X$ is $r$-identifiable. This result can be rephrased as follows: if  $A\notin \Ee_r(X)$ for some  $A\in \Ss_r(X)$ then  $X$ is $r$-identifiable, or equivalently, if  $\Ee_r(X)\neq \Ss_r(X)$ then $X$ is $r$-identifiable. Hence, to get generic $r$-identifiability for a given $X$ it is enough to find a set of $r$ smooth points not in $\Ee_r(X)$ or even not in $\Bb_r(X)$. 

The following 
statement can also be found
in \cite[Lemma 5.1]{COV17} where it is called \emph{Hessian criterion}, see also \cite{COV14}.  We reformulate it here using the notions of strong base locus and Terracini locus. We recall that given $q\in \PP^N$ the $X$-rank of $q$ is the minimum integer $r$ such that $q\in \langle A\rangle$ for some $A\in \Ss_r(X)$.
\begin{proposition}\label{prop: ovvia}
    Let $X\subset \PP^N$ be integral and nondegenerate. Fix an integer $r\leq \frac{N+1}{\dim X+1}-1$. Let $A\in \Ss_r(X)$. If $A\notin \Ee_r(X)\cup \TT_r(X)$ then any $q\in \langle A \rangle$ with $q\notin \mathrm{Sing}(\sigma_r(X))$  is an identifiable point of $X$-rank $r$.
\end{proposition}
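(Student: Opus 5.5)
The plan follows the standard Hessian-criterion argument: a second decomposition of $q$ forces $\langle 2A\rangle$ to contain the tangent space at a point not in $A$. First fix $A=\{a_1,\dots,a_r\}\notin \Ee_r(X)\cup\TT_r(X)$ and $q\in\langle A\rangle$ smooth on $\sigma_r(X)$. We may assume $q$ is not in the span of a proper subset of $A$; otherwise $q$ lies in $\sigma_{r-1}(X)$, and I would handle this by noting that we want $X$-rank exactly $r$. Since $A\notin\TT_r(X)$ and $r(\dim X+1)\le N+1$, we get $\dim\langle 2A\rangle=r(\dim X+1)-1$. By Terracini's lemma, $\langle 2A\rangle\subseteq T_q\sigma_r(X)$ for every $q\in\langle A\rangle$ in the open part. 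Comparing dimensions, using that $q$ is a smooth point of $\sigma_r(X)$ and $\dim\sigma_r(X)\le r(\dim X+1)-1$, we obtain $T_q\sigma_r(X)=\langle 2A\rangle$ and $\dim\sigma_r(X)=r(\dim X+1)-1$, so $\sigma_r(X)$ is nondefective.

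Next suppose $q\in\langle B\rangle$ for some $B\in\Ss_s(X)$ with $s\le r$ and $B\ne A$. Pad $B$ with extra points if necessary. The key claim is that for every $b\in B$ we have $T_bX\subseteq T_q\sigma_r(X)$. The reason is that, moving $b$ along a curve in $X$ while keeping $q$ in the span, the points of $\langle B_t\rangle$ sweep through the smooth point $q$ of $\sigma_r(X)$. Concretely, the map $X^s\times\PP^{s-1}\dashrightarrow\sigma_s(X)\subseteq\sigma_r(X)$ has image through $q$, and its differential at the point $(B,\lambda)$ lands in $T_q\sigma_r(X)$ because $q$ is smooth. When all coefficients $\lambda_b$ are nonzero, this image contains each $T_bX$. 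The coefficients are nonzero by minimality of $B$: replace $B$ by a minimal subset spanning $q$, which has size at least $r$ because $q\notin\sigma_{r-1}(X)$, so it has size exactly $r$.

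Hence $T_bX\subseteq\langle 2A\rangle$ for all $b\in B$. Since $B\ne A$, there is some $b\in B\setminus A$, and $b\in X_{\reg}$ because $B\in\Ss_s(X)$. Thus $\langle 2(A\cup\{b\})\rangle=\langle 2A\rangle$. We also have $\langle 2A\rangle\ne\PP^N$, because $\dim\langle 2A\rangle=r(\dim X+1)-1<N$ by the bound on $r$. So $A\in\Ee_r(X)$, a contradiction. This shows that $\langle A\rangle$ is the only $r$-secant space through $q$ and that no smaller set works, so $q$ is identifiable of $X$-rank $r$.

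The main obstacle is rank $r$ exactly, namely excluding $q\in\sigma_{r-1}(X)$ and spans of proper subsets. I would first check that the hypothesis $q\notin\mathrm{Sing}(\sigma_r(X))$ handles this: points of $\langle A'\rangle$ with $A'\subsetneq A$ are expected to be singular on $\sigma_r(X)$ when $\sigma_r(X)\ne\PP^N$, by the analogue of the tangent-space argument above. If that fails, I would add the genericity assumption that $q$ uses all points of $A$ and argue exactly as in the previous paragraph, with $B$ a minimal decomposition of size at most $r$.
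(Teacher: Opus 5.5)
Your proposal is correct and is essentially the paper's argument: non-Terracini plus smoothness of $q$ give $T_q\sigma_r(X)=\langle 2A\rangle$, and a second decomposition $B$ then yields $T_bX\subseteq\langle 2A\rangle$ for some $b\in B\setminus A$, contradicting $A\notin\Ee_r(X)$. The step you left hedged does hold, and the paper uses it only implicitly: if $q\in\sigma_{r-1}(X)$, then every line $\langle q,y\rangle$ with $y\in X\setminus\{q\}$ lies in $\sigma_r(X)$, so $T_q\sigma_r(X)=\PP^N$; since $\dim\sigma_r(X)\le r(\dim X+1)-1<N$ by the bound on $r$, such a $q$ is singular, and no extra genericity assumption is needed.
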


\begin{proof}
First notice that since $r\leq \frac{N+1}{\dim X+1}-1$ then  $h^0(\Ii_{2A}(1))\neq 0$ for any $A\in \Ss_r(X)$. Moreover, since $A\notin \TT_r(X)$ we observe that the $r$-th secant variety of $X$ is not defective.
    Let $A\in \Ss_r(X)$ and take a smooth $q\in \sigma_r(X)$ with $q\in \langle A\rangle $. Since $A\notin \TT_r(X)$ and $h^0(I_{2A}(1))\neq 0$ then $h^1(\Ii_{2A}(1))=0$, which means that $\dim \langle 2A \rangle=r(n+1)-1=\dim T_q\sigma_r(X)$ and in particular, $\langle 2A\rangle=T_q\sigma_r(X)$. 
    Assume by contradiction that there exists $B\in S_r(X)$ with $B\neq A$ such that $q\in \langle B\rangle $. Since $\langle 2B\rangle\subseteq T_q\sigma_r(X)$
for every $x\in B$, we have $T_xX\subset \langle 2A\rangle $ for $x\in B\setminus A$, which is impossible because $A\notin \Ee_r(X)$.  So we have proved that any $q\in A$ has a unique decomposition of length $r$ and in particular this implies that the rank of $q$ is $r$. \end{proof}
Although \Cref{prop: ovvia} is quite simple, making it explicit to determine if a given point (or tensor) $q$ is identifiable is actually challenging. Indeed, given a pair $(q,A)$ with $q\in \langle A\rangle$ for some $A\in \Ss_r(X)$, the conditions $q\notin \TT_r(X)$ and $q\notin \Ee_r(X)$ are easy to verify, especially in the context of tensor related varieties. By contrast, the condition that $q$ is a smooth point of $\sigma_r(X)$ is highly nontrivial to verify, the main obstruction being the fact that the singular locus of a secant variety is not known in general. It is worth mentioning however, that if $X$ is a Veronese variety \cite[Lemma 5.4]{COV17} explicitly gives a range under which it is possible to determine smoothness by relying on the use of local equations (we refer the interested reader also to \cite{landsberg2013equations, BuczynskaBuczynski2014}).    

We conclude with the following consequence of 
\Cref{prop: ovvia} that states that all the possible decompositions of a given $q\in \langle A\rangle$ can be found in the contact locus of $A$.
\begin{corollary}
  Let $X\subset \PP^N$ and fix an integer $r\leq \frac{N+1}{\dim X+1}-1$. Let $A\in \Ss_r(X)$ with $A\notin \TT_r(X)$ and let $q\in \langle A\rangle$ with  $q\notin \mathrm{Sing}(\sigma_r(X))$. Then any other $B\in \Ss_r(X)$ such that $q\in \langle B\rangle$ is contained in $A\cup E(A)$.   
\end{corollary}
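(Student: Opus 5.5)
The plan is to argue directly from \Cref{prop: ovvia} and its proof rather than from anything new. Fix $B\in \Ss_r(X)$ with $B\ne A$ and $q\in \langle B\rangle$; we must show that every $x\in B$ lies in $A\cup E(A)$. Points of $B\cap A$ are trivially fine, so the content is to show that each $x\in B\setminus A$ belongs to $E(A)=C(A)\setminus A$. By \Cref{def: Ee}, this means showing $\langle 2(A\cup\{x\})\rangle=\langle 2A\rangle$, i.e. $T_xX\subseteq \langle 2A\rangle$.

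The first step reproduces the beginning of the proof of \Cref{prop: ovvia}. The bound $r\le \frac{N+1}{\dim X+1}-1$ gives $h^0(\Ii_{2A}(1))\ne 0$, so $\langle 2A\rangle\ne\PP^N$. Since $A\notin\TT_r(X)$, we get $\dim\langle 2A\rangle=r(\dim X+1)-1$. By Terracini's lemma, $\langle 2A\rangle\subseteq T_q\sigma_r(X)$ for $q\in\langle A\rangle$. Here I would note that the inclusion of the span of tangent spaces at the points of a decomposition into the tangent space of $\sigma_r(X)$ at a smooth point holds for any decomposition, not just general ones, via the abstract secant variety and its differential. Since $q$ is a smooth point of $\sigma_r(X)$ and $\sigma_r(X)$ has dimension at most $r(\dim X+1)-1$, the two spaces coincide: $\langle 2A\rangle=T_q\sigma_r(X)$.

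The second step applies the same reasoning to $B$. Since $q\in\langle B\rangle$ and $q$ is a smooth point of $\sigma_r(X)$, we get $\langle 2B\rangle\subseteq T_q\sigma_r(X)=\langle 2A\rangle$. Hence for every $x\in B\setminus A$ we have $T_xX=\langle 2x\rangle\subseteq\langle 2A\rangle$, which is exactly $x\in C(A)\setminus A=E(A)$. Therefore $B\subseteq A\cup E(A)$.

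The only delicate point is justifying $\langle 2B\rangle\subseteq T_q\sigma_r(X)$ for an arbitrary, possibly special, $B$. I expect this to be the main obstacle. My approach would be to consider the map from $X^r\times\PP^{r-1}$ to $\sigma_r(X)$, $(x_1,\dots,x_r,\lambda)\mapsto\sum\lambda_ix_i$. Its differential at $(B,\lambda)$ has image $\langle 2B\rangle$ whenever all $\lambda_i\ne0$, and this image lies in the Zariski tangent space at $q$, which equals $T_q\sigma_r(X)$ when $q$ is smooth. If some $\lambda_i$ vanish, $q$ lies in a smaller secant, and the corresponding points are only partly constrained; I would treat $B$ as a minimal decomposition, which is the same convention as in \Cref{prop: ovvia}.
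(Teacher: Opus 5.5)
Your route is the paper's. The paper states this corollary without a separate proof and reads it off from the argument for \Cref{prop: ovvia}: $\langle 2A\rangle=T_q\sigma_r(X)\supseteq\langle 2B\rangle$, so $T_xX\subseteq\langle 2A\rangle$, i.e. $x\in E(A)$, for every $x\in B\setminus A$.

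\textbf{The gap.} You handle the degenerate case you flag at the end by proposing to ``treat $B$ as a minimal decomposition''. The statement does not allow this: it quantifies over every $B\in\Ss_r(X)$ with $q\in\langle B\rangle$. The conclusion would genuinely fail if that case could occur. If $q\in\langle B'\rangle$ for some $B'\subsetneq B$, the points of $B\setminus B'$ are unconstrained and need not lie in $A\cup E(A)$. The same issue is present, unremarked, in your first step. The inclusion $\langle 2A\rangle\subseteq T_q\sigma_r(X)$ also needs all coefficients of $q$ with respect to $A$ to be nonzero; a vanishing coefficient only puts the point itself, not its tangent space, in the image of the differential. The paper's proof of \Cref{prop: ovvia} skips this point as well.

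\textbf{The fix.} Show that the hypotheses exclude this case.
\begin{enumerate}
\item The bound on $r$ gives $\dim\sigma_r(X)\le r(\dim X+1)-1\le N-\dim X-1<N$, so $\sigma_r(X)\ne\PP^N$.
\item Suppose $q\in\sigma_{r-1}(X)$, say $q\in\langle B'\rangle$ with $\#B'\le r-1$. For every $x\in X\setminus\{q\}$, the line $\langle q,x\rangle$ lies in $\langle B'\cup\{x\}\rangle\subseteq\sigma_r(X)$.
\item The embedded tangent space of a variety at a point contains every line through that point lying on the variety. Hence $T_q\sigma_r(X)\supseteq\langle X\rangle=\PP^N$, so $q$ would not be a smooth point of $\sigma_r(X)$.
\end{enumerate}
Therefore $q\notin\sigma_{r-1}(X)$. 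In particular, the points of $A$ (resp.\ $B$) are linearly independent, and the coefficients of $q$ with respect to them are all nonzero. With this in place, your differential computation on the affine cones applies to both $A$ and $B$, and the rest of your argument goes through unchanged.
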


\bibliographystyle{alpha}
\bibliography{references.bib}

\Addresses

\end{document}